\documentclass[11pt,a4paper]{amsart}
\usepackage[a4paper,centering,left=3.0cm,right=3.0cm]{geometry}
\usepackage{amsfonts,amscd,amssymb,amsmath,amsthm,mathrsfs,multirow,xcolor,lscape,longtable,pbox,lipsum}
\usepackage[thinlines]{easytable}
\usepackage{microtype}
\usepackage[normalem]{ulem}
\setcounter{tocdepth}{1}
\usepackage{etex}
\usepackage{tikz-cd,xr,multicol,microtype}
\usepackage{color}
\usepackage[colorlinks,linkcolor=blue,anchorcolor=blue,citecolor=blue,backref=page]{hyperref}
\usepackage{hyperref}
\usepackage{phaistos}
\usepackage{todonotes}


\newtheorem{thm}{Theorem}

\newtheorem{lema}[thm]{Lemma}
\newtheorem{prop}[thm]{Proposition}

\newtheorem{rmk}[thm]{Remark}


\setcounter{tocdepth}{1} \usepackage[all,cmtip]{xy}\usepackage{tikz}\usetikzlibrary{arrows}
\usetikzlibrary{matrix}\usetikzlibrary{calc}



\newcommand{\Z}{\mathbb{Z}}

\newcommand{\A}{\mathbb{A}}
\newcommand{\Q}{\mathbb{Q}}
\newcommand{\n}{\mathfrak{n}}
\newcommand{\G}{\Gamma}

\newcommand{\R}{\mathbb{R}}

\newcommand{\m}{\mathcal{M}}

\newcommand{\W}{\mathcal{W}}
\newcommand{\cN}{\mathcal{N}}

\newcommand{\rG}{\mathrm{G}}
\newcommand{\rGL}{\mathrm{GL}}
\newcommand{\rK}{\mathrm{K}}

\newcommand{\Ind}{\mathrm{Ind}}
\newcommand{\rP}{\mathrm{P}}
\newcommand{\rQ}{\mathrm{Q}}
\newcommand{\rN}{\mathrm{N}}

\newcommand{\rM}{\mathrm{M}}
\newcommand{\rS}{\mathrm{S}}
\newcommand{\rB}{\mathrm{B}}
\newcommand{\rT}{\mathrm{T}}
\newcommand{\rA}{\mathrm{A}}

\newcommand{\fm}{\mathfrak{m}}
\newcommand{\fg}{\mathfrak{g}}
\newcommand{\fn}{\mathfrak{n}}
\newcommand{\fp}{\mathfrak{p}}
\newcommand{\fu}{\mathfrak{u}}

\newcommand{\fa}{\mathfrak{a}}
\newcommand{\fb}{\mathfrak{b}}
\newcommand{\ft}{\mathfrak{t}}

\newcommand{\cZ}{\mathcal{Y}}
\newcommand{\cY}{\mathcal{Z}}
\newcommand{\cP}{\mathcal{P}}

\newcommand{\Eis}{\mathscr{Eis}}
\newcommand{\SL}{\mathrm{SL}}
\newcommand{\Res}{\mathrm{Res}}

\newcommand{\rank}{\mathrm{rank}}
\newcommand{\vcd}{\mathrm{vcd}}
\newcommand{\cd}{\mathrm{cd}}

\newcommand{\eqr}[1]{\mbox{(\ref{eq:#1})}}
\newcommand{\ie}{i.e.\ }
\newcommand{\mr}[1]{\mathrm{#1}}

\newcommand{\C}{\mathbb{C}}

\newcommand{\tm}{\widetilde{\m}}




\title[Cohomology of $G_{2}(\Z)$] {Boundary and Eisenstein  Cohomology of $G_2(\Z)$ }
\author{Jitendra Bajpai and Lifan Guan}
\address{Mathematisches Institut, Georg-August Universit\"at G\"ottingen, D-37073 Germany.}
\email{jitendra@math.uni-goettingen.de}
\address{Mathematisches Institut, Georg-August Universit\"at G\"ottingen, D-37073 Germany.}\email{guanlifan@gmail.com}
\subjclass[2010]{11F75;11F70;11F22;11F06}
\keywords{$G_2$, Borel-Serre compactification, Boundary and Eisenstein  Cohomology}


\begin{document}
\date{\today}

\begin{abstract}
In this article, Eisenstein cohomology of the arithmetic group $G_2(\Z)$ with coefficients in any finite dimensional highest weight irreducible representation has been determined. We accomplish this by studying the cohomology  of the boundary of the Borel-Serre compactification.
\end{abstract}

\maketitle

\tableofcontents


\section{Introduction}\label{intro}

Let $\rG$ be a semisimple algebraic group defined over $\Q$, $K_\infty \subset \rG(\R)$ be a maximal compact subgroup and $\rS = \rG(\R)/K_\infty$ be the corresponding symmetric space. If $\Gamma \subset \rG(\Q)$ is an arithmetic subgroup then every representation $(\rho, \m)$ of $\rG$ defines, in a natural way, a sheaf $\tm$ on the locally symmetric space $\rS_\Gamma = \Gamma \backslash \rS$. One has the isomorphism \begin{equation}\label{eq:gpls}
    H^\bullet(\Gamma, \m) \cong H^\bullet(\rS_\Gamma, \tm),
\end{equation} for details see Chapter 7 of~\cite{BoWa}. Note that, throughout the paper, we use $H^\bullet$ to represent the full cohomology group, namely, $H^\bullet(\G, \m)=\oplus_{q} H^q(\G, \m)$. On the other hand, let $\overline{\rS}_\Gamma$ denote the Borel-Serre compactification of $\rS_\Gamma$, then the inclusion $i:\rS_\Gamma \hookrightarrow\overline{\rS}_\Gamma$, which is an homotopic equivalence, determines a sheaf $i_\ast(\tm)$  on $\overline{\rS}_\Gamma$ and induce a canonical isomorphism in the cohomology
\begin{equation}\label{eq:hmtpyequ}
	H^\bullet(\overline{\rS}_\Gamma, i_{\ast}\tm) \cong H^\bullet(\rS_\Gamma, \tm),
\end{equation}
where $i_\ast$ denotes the direct image functor defined by $i$. On the other hand, let $\partial \rS_\Gamma= \overline{\rS}_\Gamma \setminus \rS_\Gamma$ and $j: \partial \rS_\Gamma \rightarrow \overline{\rS}_\Gamma$  be the closed embedding. The following exact sequence of sheaves
\[0\rightarrow i_!(\tm)\rightarrow i_\ast(\tm) \rightarrow j_\ast (\tm) \rightarrow 0\]
gives rise to a long exact sequence of cohomology  groups associated to $\rS_\G$,
\[\cdots \rightarrow H^q_c(\rS_\Gamma, \tm)\rightarrow H^q(\rS_\Gamma, \tm) \xrightarrow{r} H^q(\partial\rS_\Gamma, \tm)\rightarrow \cdots.\]
Let \emph{boundary cohomology of $\Gamma$ with coefficients in $\m$} denoted  by $H^\bullet(\partial\rS_\Gamma, \tm)$ and the \emph{Eisenstein of cohomology $\Gamma$ with coefficients in $\m$} denoted by $H_{Eis}^\bullet(\rS_\Gamma, \tm)$, to be the image of the map $r$. Clearly, there is an exact sequence
\begin{equation*}
 0 \rightarrow H_{!}^q(\rS_\Gamma, \tm) \rightarrow  H^q(\rS_\Gamma, \tm) \xrightarrow{r} H^q_{Eis}(\rS_\Gamma, \tm)\rightarrow 0,
 \end{equation*}
where $H_!^q(\rS_\G, \tm)$ is the kernel of the restriction map $r$.

The study of Eisenstein cohomology was initiated by Harder \cite{Harder87}, and found that  Eisenstein cohomology is fundamentally related to several important topics in number theory, e.g., special values of $L$-functions, extension of motives, to simply mention a few. See Harder's ICM report \cite{Harder91} for more details on the relation of Eisenstein cohomology with other topics. Interested reader is also referred to~\cite{HR2020} for recent advances on the subject. Although lots of work have been done, our understanding of Eisenstein cohomology is still far from complete.

The main purpose of this article is to determine the boundary and Eisenstein cohomology of the arithmetic group $G_2(\Z)$ with coefficients in any  finite dimensional highest weight representation $\m_\lambda$ of $G_2$ where $\lambda$ denotes its highest weight.

The Eisenstein cohomology of arithmetic subgroups of $\Res_{k/\Q}G_2$ for totally real field $k$ has  been previously studied in~\cite{LiSch93}. Compared to their work, the basic setting of this paper is more restrictive, that is we only consider cohomology of the full arithmetic group $G_2(\Z)$, but we provide complete results for the boundary and Eisenstein cohomology of $G_2(\Z)$ with coefficients in any finite dimensional highest weight representation. 

On the other hand, it is worth mentioning the paper~\cite{BHHM2018}, where the boundary and Eisenstein cohomology of $\SL_3(\Z)$ with coefficients in finite dimensional highest weight representations is determined by using Euler characteristic, which is a fundamental case to study among other rank two arithmetic groups. Unfortunately, their method, being elementary but tricky and powerful, does not work here in the case of $G_2(\Z)$ for dimensional reasons.

The method we are employing here follows closely the work of Harder in~\cite{Harder2012}. In particular, our method is constructive that involves the theory of Eisenstein series, intertwining operators, $(\fg, K_\infty)$-cohomology and $L$-functions. Indeed, it is well-known that the Eisenstein cohomology $H^\bullet_{Eis}(\rS_\Gamma, \tm)$ spans a maximal isotropic subspace of the boundary cohomology $H^\bullet(\partial\rS_\Gamma, \tm)$ under the Poincar\'e duality. Hence we are done if we manage to construct enough classes in $H^\bullet_{Eis}(\rS_\Gamma, \tm)$.  Starting from cohomology classes from $H^\bullet(\partial\rS_\Gamma, \tm)$, we construct cohomology classes in $H^\bullet(\rS_\Gamma, \tm)$ by evaluating the corresponding Eisenstein series at certain special point, and then we get non-trivial Eisenstein cohomology classes by restriction back to the boundary. Subtlety appears when the corresponding Eisenstein series is not holomorphic at the special point, or equivalently, the corresponding $L$-function is not holomorphic at the special point. We complete the proof by further exploring the Hecke action and Poincar\'e duality, as well as a detailed study of the corresponding $(\fg, K_\infty)$-cohomology.

\subsection{Main results} Let us now give the details of the results obtained in this article.

\begin{itemize}
\item Theorem~\ref{bdg2}, where the boundary cohomology with coefficients in every finite dimensional highest weight representation is described.

\item Theorem~\ref{Eiscoh}, where we describe the Eisenstein cohomology for every finite dimensional highest weight representation.
\end{itemize}
\subsection{Overview of the article} We quickly  summarize the content of each section of the article. In Section~\ref{preli}, we provide the details of structure of the group $G_2$: its Weyl group, parabolic subgroups, root system and Kostant representatives, which is the first basic step to accomplish our goal. In Section~\ref{parity}, the parity conditions for the cohomology of the boundary components, \ie the cohomology of the parabolic subgroups have been established which helps us to compute the boundary cohomology, content of Section~\ref{boundary},  of the locally symmetric space of the arithmetic group $G_2(\Z)$. In Section~\ref{Eis}, we achieve our main goal by completing the study of Eisenstein cohomology of $G_2(\Z)$ for every finite dimensional highest weight representation.


\section{Preliminaries}\label{preli}
This section quickly review the basic properties of $G_2$ and familiarize the reader with the notations to be used throughout the article. We discuss the corresponding locally symmetric space, Weyl group, the associated spectral sequence and Kostant representatives of the standard parabolic subgroups.

\subsection{Structure Theory}\label{g2}
Let $G_2$ be the Chevalley group defined over $\Z$ of type $G_2$  and $\Phi$ be the corresponding root system. Let us fix a maximal $\Q$-split torus $\rT$ and a Borel subgroup $\rB$ that contains $\rT$. The set of simple roots associated to $\rB$ is denoted by $\Delta=\{\alpha_1, \alpha_2\}$ with $\alpha_1$ and $\alpha_2$ be the short and long simple roots respectively. The Weyl group $\W$ of $\Phi$ is isomorphic to the dihedral group $D_6$. The fundamental weights associated to this root system are given by $\gamma_1=2\alpha_1+ \alpha_2$ and $\gamma_2=3\alpha_1+2\alpha_2$.

Let $\mathfrak{g}$ denote the Lie algebra $\mathfrak{g}_{2}$ and $\mathfrak{t} \subset \mathfrak{g}$ be the  Lie subalgebra associated to $\rT$. Let $\Phi=\Phi^+ \cup \Phi^-$ be the corresponding root system. We know that $$\Phi^+=\{\alpha_1, \alpha_2, \alpha_1+\alpha_2, 2\alpha_1+\alpha_2, 3\alpha_1+\alpha_2, 3\alpha_1+ 2\alpha_2\}.$$
  Finally, we write $\rho=\frac{1}{2}\sum_{\alpha\in \Phi^+}\alpha=5\alpha_1+3\alpha_2$.

Recall that a $\Q$-parabolic subgroup is called \emph{standard} if it contains the  Borel subgroup $\rB$.
Let $\{\rP_0, \rP_1,\rP_2\}$ be the set of standard $\Q$-parabolic subgroups, where $\rP_1$ (resp. $\rP_2$) denotes the maximal $\Q$-parabolic subgroup corresponding to the simple roots $\alpha_2$ (resp. $\alpha_1$) and $\rP_0=\rB$.  Thus,
\begin{equation*}
\fp_1=\fu_{-\alpha_1}\oplus \ft \oplus_{\alpha\in \Phi^+}\fu_{\alpha}, \text{ and } \fp_2=\fu_{-\alpha_2}\oplus \ft \oplus_{\alpha\in \Phi^+}\fu_{\alpha},
\end{equation*}
where $\fp_i$ denotes the Lie algebra of $\rP_i$ and $\fu_{\alpha}$ denotes the root space corresponding to $\alpha$. Note that the minimal  $\Q$-parabolic $\rP_0$ is simply the group $\rP_1 \cap \rP_2$. Therefore, the corresponding Levi quotients are given by
\[
\rM_0 = \mathbb{G}_m^2, \quad \rM_1 = \rGL_2 \quad \mbox{ and } \quad \rM_2 = \rGL_2.
\]

Let us choose and fix a maximal compact subgroup $K_\infty \subset G_2(\R)$. It is well-known that it  can be identified with $SO_4(\R)$.
From now on throughout the article let $\rS = G_2(\R) / K_\infty$, $\Gamma$ be the arithmetic group $G_2(\Z)$ and $\rS_\Gamma = \Gamma \backslash \rS$.

\subsection{$\Gamma$-conjugacy classes of $\mathbb{Q}$-Parabolic Subgroups}\label{standard}
in this subsection, we determine the $\Gamma$-conjugacy classes of $\mathbb{Q}$-Parabolic Subgroups $\cP_\Q(G_2, G_2(\Z))$, which should be well-known. But as we don't know of a proper reference, a proof is given below.
\begin{lema}\label{lem-parabolic-conjugate}
  $\cP_\Q(G_2, G_2(\Z))=\{\rP_0, \rP_1, \rP_2\}$.
\end{lema}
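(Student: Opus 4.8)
The plan is to combine the standard conjugacy theory of parabolic subgroups with the fact that the class number of $G_2$ is one. Recall first that over the split group $G_2$ every $\Q$-parabolic subgroup is $G_2(\Q)$-conjugate to a unique standard parabolic subgroup, and that the standard $\Q$-parabolic subgroups correspond bijectively to the subsets of $\Delta=\{\alpha_1,\alpha_2\}$; hence the proper ones are exactly $\rP_0,\rP_1,\rP_2$, and they are pairwise non-conjugate because they have distinct types (equivalently, non-isomorphic Levi quotients). It therefore suffices to prove that each $G_2(\Q)$-conjugacy class of $\Q$-parabolics contains a single $\Gamma$-conjugacy class. Since parabolic subgroups are self-normalizing, i.e. $N_{G_2}(\rP)=\rP$, the set of $\Gamma$-conjugacy classes of $\Q$-parabolics of the type of $\rP$ is naturally in bijection with the double coset space $\Gamma\backslash G_2(\Q)/\rP(\Q)$, via $g\rP g^{-1}\mapsto \Gamma g\rP(\Q)$. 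Consequently the lemma is equivalent to the identity $G_2(\Q)=\Gamma\cdot\rP(\Q)=G_2(\Z)\cdot\rP(\Q)$ for $\rP\in\{\rP_0,\rP_1,\rP_2\}$, and since $\rP_0=\rB\subseteq\rP_1,\rP_2$ it is enough to establish this for the Borel subgroup $\rP=\rB$.

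To prove $G_2(\Q)=G_2(\Z)\cdot\rB(\Q)$ I would pass to the ring of finite adeles $\A_f$. At each prime $p$ one has the Iwasawa decomposition $G_2(\Q_p)=\rB(\Q_p)\,G_2(\Z_p)$, because $G_2$ is split and $G_2(\Z_p)$ is hyperspecial (concretely, $G_2/\rB$ is proper over $\Z_p$ and $H^1(\Z_p,\rB)$ vanishes by Lang's theorem over the residue field). Given $g\in G_2(\Q)$, one writes $g=b_pk_p$ with $b_p\in\rB(\Q_p)$, $k_p\in G_2(\Z_p)$, choosing $b_p=1$, $k_p=g$ at the finitely many primes where $g\notin G_2(\Z_p)$; this exhibits $g=\mathbf{b}\,\mathbf{k}$ inside $G_2(\A_f)$ with $\mathbf{b}\in\rB(\A_f)$ and $\mathbf{k}\in G_2(\hat{\Z})$. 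Now $\rB\cong\mathbb{G}_m^2\ltimes\rU$ has class number one --- because $\Z$ has trivial ideal class group and unipotent groups have class number one --- so $\mathbf{b}=\beta\mathbf{v}$ for some $\beta\in\rB(\Q)$ and $\mathbf{v}\in\rB(\hat{\Z})$. Then $\beta^{-1}g=\mathbf{v}\mathbf{k}\in G_2(\hat{\Z})\cap G_2(\Q)=G_2(\Z)$, hence $g=\beta\cdot(\beta^{-1}g)\in\rB(\Q)\cdot G_2(\Z)$; taking inverses yields $G_2(\Q)=G_2(\Z)\cdot\rB(\Q)$, and a fortiori $G_2(\Q)=G_2(\Z)\cdot\rP(\Q)$ for $\rP\in\{\rP_0,\rP_1,\rP_2\}$. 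Alternatively, one may simply quote class number one for $G_2$ itself, $G_2(\A_f)=G_2(\Q)\,G_2(\hat{\Z})$, which follows from strong approximation since $G_2$ is simply connected and $G_2(\R)$ is non-compact.

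The one genuinely arithmetic point --- and the only place where anything beyond formal group theory enters --- is the passage from the local Iwasawa decompositions to the global equality $G_2(\Q)=G_2(\Z)\rB(\Q)$: the local factorizations do not glue on the nose, and one must inject the triviality of the class group of $\Z$ (packaged as class number one for $\rB$, or for $G_2$) to rigidify them. The remaining ingredients --- the conjugacy theorem for parabolics, their self-normalization, the identification of $\Gamma$-conjugacy classes with the double coset space, and the elementary fact $G_2(\Q)\cap G_2(\hat{\Z})=G_2(\Z)$ --- are standard, so I expect the write-up to be short.
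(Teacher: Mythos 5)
Your proof is correct and follows essentially the same route as the paper: reduce to the identity $G_2(\Q)=G_2(\Z)\cdot\rP(\Q)$ and establish it by combining the $p$-adic Iwasawa decompositions with class number one (strong approximation over $\Z$). The only cosmetic difference is that you quote the Borel--Tits conjugacy theorem to get a single $G_2(\Q)$-conjugacy class per type, whereas the paper derives the same fact from $H^1(\Q,\rP)=1$ via Hilbert's Theorem 90; both are standard and interchangeable here.
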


\begin{proof}
We have to show that for each standard parabolic $\rP$, $G_2(\Z)$ acts transitively on $G_2/\rP(\Q)$. 
  Since $G_2$ is semisimple algebraic group split over $\Q$, by strong approximation, we have
  $$G_2(\Q)\prod_{p} G_2(\Z_p)=G_2(\mathbb{A}_f),$$
  which implies that $G_2(\Q)/G_2(\Z)=G_2(\mathbb{A}_f)/\prod_{p}G_2(\Z_p)$.
  Moreover, as $\Q$ is of class number $1$, we have
  $$\rP(\Q)\prod_{p} \rP(\Z_p)=\rP(\Q_p).$$
  According to the Iwasawa decomposition for $p$-adic groups, $\rP(\Q_p)G_2(\Z_p)=G_2(\Q_p)$.
  Consequently, we have $$ \rP(\Q) \prod_{p}G_2(\Z_p)=G_2(\mathbb{A}_f).$$
  Hence $\rP(\Q)$ acts transitively on $G_2(\Q)/G_2(\Z)$, which implies $G_2(\Z)$ acts transitively on $G_2(\Q)/\rP(\Q)$.

  On the other hand, we have the following sequence of Galois cohomology,
  \begin{equation*}
    1\rightarrow \rP(\Q) \rightarrow G_2(\Q)\rightarrow G_2/\rP(\Q)\rightarrow H^1(\Q, \rP)\rightarrow H^1(\Q, G_2)\rightarrow \cdots.
  \end{equation*}
  Note that $\rP$ is the semi-direct product of its unipotent radical and its Levi subgroup, which is isomorphic to either $\mathbb{G}_m^2$ or $\rGL_2$.
 According to Hilbert's Theorem 90 (see, e.g.~\cite[Chapter III]{Serre1997}), we have $H^1(\Q, \rP)={1}$. Thus $G_2/\rP(\Q)=G_2(\Q)/\rP(\Q)$, from which the lemma follows.
\end{proof}

\subsection{Irreducible Representations}\label{irreducible}
 The fundamental weights associated to $\Phi^+$  are given by $\gamma_1=2\alpha_1+ \alpha_2$ and $\gamma_2=3\alpha_1+2\alpha_2$.
 Thus irreducible finite dimensional representations of $G_2$ are determined by their highest weights, which in this case are the linear functionals of the form $m_1 \gamma_1 + m_2 \gamma_2$ with $m_1, m_2$ non-negative integers. For any $\lambda=m_1\gamma_1+m_2\gamma_2$, we set $\m_\lambda$ to be the representation defined over $\Q$ with highest weight $\lambda$.

\subsection{Kostant Representatives} \label{KostantRep}
It is known that the Weyl group $\mathcal{W}=\mathcal{W}(\Phi)$ is the dihedral group $D_6$ given by $12$ elements. They are listed in the first column of Table~\ref{Weylgp} and described in the second column as a product of simple reflections $s_{1}$ and $s_2$, associated to the simple roots $\alpha_1$ and $\alpha_2$ respectively. Then we have
\begin{eqnarray}
s_1(\alpha_1)=-\alpha_1,\  s_1(\alpha_2)=3\alpha_1+\alpha_2 \quad\text{ and }\quad s_2(\alpha_1)=\alpha_1+\alpha_2, \ s_2(\alpha_2)=-\alpha_2.
\end{eqnarray}

In the third column we make a note of their lengths and in the last column we describe the element $w \cdot \lambda = w(\lambda + \rho) - \rho$, where the pair $(a, b)$ denotes the element $a\alpha_1 + b\alpha_2 \in \ft^\ast$.

\begin{table}[ht]
\centering
\begin{tabular}{clcl}
\hline \noalign{\smallskip}
Label & $w$  & $\ell(w)$ & $w \cdot \lambda$\\
\noalign{\smallskip}\hline\noalign{\smallskip}
$w_1$ & $1$ & $0$ & $(2m_1+3m_2, \ m_1+2m_2)$\\
$w_2$ & $s_1$ & $1$ & $(m_1+3m_2-1,\  m_1+2m_2)$\\
$w_3$ & $s_{2}$ & $1$ & $(2m_1+3m_2,\  m_1+m_2-1)$\\
$w_4$ & $s_{1} s_{2}$ & $2$ & $(m_1-4,\  m_1+m_2-1)$\\
$w_5$ & $s_{2} s_{1}$  & $2$ & $(m_1+3m_2-1,\  m_2-2)$\\
$w_6$ & $s_{1} s_{2} s_{1}$ & $3$ & $(-m_1-6,\  m_2-2)$\\
$w_7$ & $s_{2}  s_{1} s_{2}$  & $3$ & $(m_1-4,\  -m_2-4)$ \\
$w_8$ & $s_{1} s_{2} s_{1} s_{2}$  & $4$ & $(-m_1-3m_2-9,\  -m_2-4)$\\
$w_9$ & $s_{2}  s_{1} s_{2} s_{1}$  & $4$ & $(-m_1-6,\  -m_1-m_2-5)$\\
$w_{10}$ & $s_{1} s_{2} s_{1} s_{2} s_{1}$  & $5$ & $(-2m_1-3m_2-10,\  -m_1-m_2-5)$\\
$w_{11}$ & $s_{2}  s_{1} s_{2} s_{1} s_{2}$  & $5$ & $(-m_1-3m_2-9,\  -m_1-2m_2-6)$\\
$w_{12}$ & $s_{1} s_{2} s_{1} s_{2} s_{1} s_{2}$  & $6$ & $(-2m_1-3m_2-10,\  -m_1-2m_2-6)$\\
\noalign{\smallskip}\hline
\end{tabular}
\vspace{0.4cm}
\caption{The Weyl Group of $G_2$}\label{Weylgp}
\end{table}

The Weyl group acts naturally on the set of roots. For each $i \in \left\{0, 1, 2\right\}$, let $\Delta(\mathfrak{u}_i)$ denote the set consisting of every root whose corresponding root space is contained in the Lie algebra  $\mathfrak{u}_i$ of the unipotent radical of $\rP_i$. The set of Weyl representatives $\W^{\rP_i} \subset \W$ associated to the parabolic subgroup $\rP_i$ (see~\cite{Kostant61}) is defined by
\[
	\mathcal{W}^{\rP_i} = \left\{w \in \mathcal{W} : w(\Phi^-) \cap \Phi^+ \subset \Delta(\mathfrak{u}_i)\right\}.
\]

Clearly $\W^{\rP_0}=\W$ and, by using the table, one can see that
\begin{eqnarray}\label{eq:Weyl12}
\W^{\rP_1}  &= \left\{ 1, s_2, s_2s_1, s_2s_1s_2, s_2s_1s_2s_1, s_2s_1s_2s_1s_2 \right\} \\ \W^{\rP_2} & =  \left\{1, s_1, s_1s_2, s_1s_2s_1, s_1s_2s_1s_2, s_1s_2s_1s_2s_1 \right\}  \,.
\end{eqnarray}


\subsubsection{Kostant representatives for minimal parabolic $\mr{P}_0$}
\begin{align} \label{highest0}
w_1\cdot\lambda &=  m_1\gamma_1 + m_2 \gamma_2 \nonumber \\
w_2\cdot\lambda &=  (-m_1-2) \gamma_1 + (m_1+m_2+1) \gamma_2 \nonumber \\
w_3\cdot\lambda &= (m_1+3m_2+3) \gamma_1 + (-m_2-2) \gamma_2 \nonumber \\
w_4\cdot\lambda &=  (-m_1-3m_2-5) \gamma_1 + (m_1+2m_2+2) \gamma_2  \nonumber \\
w_5\cdot\lambda &= (2m_1+3m_2+4) \gamma_1 + (-m_1-m_2-3) \gamma_2 \nonumber \\
w_6\cdot\lambda &= (-2m_1-3m_2-6) \gamma_1 + (m_1+2m_2+2) \gamma_2  \nonumber \\
w_7 \cdot \lambda &= (2m_1+3m_2+4) \gamma_1 + (-m_1-2m_2-4) \gamma_2 \nonumber \\
w_8\cdot\lambda &=  (-2m_1-3m_2-6) \gamma_1 + (m_1+m_2+1) \gamma_2 \nonumber \\
w_9\cdot\lambda &= (m_1+3m_2+3) \gamma_1 + (-m_1-2m_2-4) \gamma_2 \nonumber \\
w_{10}\cdot\lambda &= (-m_1-3m_2-5) \gamma_1 + m_2 \gamma_2  \nonumber \\
w_{11}\cdot\lambda &= m_1 \gamma_1 + (-m_1-m_2-3) \gamma_2 \nonumber \\
w_{12}\cdot\lambda &= (-m_1-2) \gamma_1 + (-m_2-2) \gamma_2  \nonumber
\end{align}

\subsubsection{Kostant representatives for maximal parabolic $\mr{P}_1$}
Here we take $$\gamma^{\rM_1}=\frac{1}{2}\alpha_1=\gamma_1-\frac{1}{2}\gamma_2 \quad \text{ and }\quad \kappa^{\rM_1}=\frac{1}{2}\gamma_2.$$
\begin{align}
w_1\cdot\lambda &=  m_1 \gamma^{\rM_1} + (m_1+2m_2) \kappa^{\rM_1} \nonumber \\
w_3\cdot\lambda &=  (m_1+3m_2+3) \gamma^{\rM_1} + (m_1+m_2-1) \kappa^{\rM_1} \nonumber \\
w_5\cdot\lambda &=  (2m_1+3m_2+4) \gamma^{\rM_1} + (m_2-2) \kappa^{\rM_1}\nonumber \\
w_7\cdot\lambda &=  (2m_1+3m_2+4) \gamma^{\rM_1} + (-m_2-4) \kappa^{\rM_1} \nonumber \\
w_9\cdot\lambda &=  (m_1+3m_2+3) \gamma^{\rM_1} + (-m_1-m_2-5) \kappa^{\rM_1}  \nonumber \\
w_{11} \cdot \lambda & = m_1 \gamma^{\rM_1} + (-m_1-2m_2-6) \kappa^{\rM_1}  \nonumber
\end{align}

\subsubsection{Kostant representatives for maximal parabolic $\mr{P}_2$}
Here we take $$\gamma^{\rM_2}=\frac{1}{2}\alpha_2=-\frac{3}{2}\gamma_1+\gamma_2 \quad \text{ and } \quad \kappa^{\rM_1}=\frac{1}{2}\gamma_1$$
\begin{align}
w_1\cdot\lambda &=  m_2 \gamma^{\rM_2} + (2m_1+3m_2) \kappa^{\rM_2} \nonumber \\
w_2\cdot\lambda &= (m_1+m_2+1) \gamma^{\rM_2} + (m_1+3m_2-1) \kappa^{\rM_2} \nonumber \\
w_4\cdot\lambda &= (m_1+2m_2+2) \gamma^{\rM_2} + (m_1-4)\kappa^{\rM_2}  \nonumber \\
w_6\cdot\lambda &= (m_1+2m_2+2) \gamma^{\rM_2} + (-m_1-6)\kappa^{\rM_2} \nonumber \\
w_8\cdot\lambda &=  (m_1+m_2+1) \gamma^{\rM_2} + (-m_1-3m_2-9)\kappa^{\rM_2} \nonumber \\
w_{10} \cdot \lambda & = m_2 \gamma^{\rM_2} + (-2m_1-3m_2-10)\kappa^{\rM_2} \nonumber
\end{align}

For $i=1,2$, we also write
\begin{equation}\label{def-a-b}
w\cdot \lambda= a_i(\lambda, w) \gamma^{\rM_i}+ b_i(\lambda, w) \kappa^{\rM_i}.
\end{equation}

The symmetry of the coefficients can be explained by the following lemma.

\begin{lema}\label{lem-involution}
Let $\rP$ be a parabolic subgroup of $G_2$ with Levi subgroup $\rM$,  $\rA_\rP$  be the central torus of $\rM$ and $\rS_\rP$ be the unique maximal torus of the semisimple part of $\rM$ contained in $\rT$. Let $\rN_\rP$ denote the unipotent radical of $\rP$ in $G_2$ and $w_{G_2}$ (resp. $w_\rM$) be the longest Weyl element in $\W$ (resp. $\W_\rM$). Then the following are true.
\begin{itemize}
  \item[(a)] The map $w\mapsto w':= w_\rM w w_{G_2}$ defines an involution on $\W^\rP$ and $\ell(w)+\ell(w')=\dim \rN_\rP$.
  \item[(b)] $w'(\lambda+\rho)-\rho |_{\rA_\rP}=w(\lambda+\rho)-\rho |_{\rA_\rP}$.
  \item[(c)] $w'(\lambda+\rho)-\rho |_{\rS_\rP}+w'(\lambda+\rho)-\rho |_{\rS_\rP}=-2\rho |_{\rS_\rP}$.
\end{itemize}
\end{lema}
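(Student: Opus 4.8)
The three assertions are all statements about how the Kostant representatives for a parabolic $\rP$ behave under multiplication by the longest Weyl elements, so the strategy is to unwind the definitions and use standard facts about Bruhat order and the dot-action. I would organize the proof around the map $w \mapsto w' = w_\rM w w_{G_2}$ and verify the three claims in order, with (a) being the structural heart from which (b) and (c) follow by direct weight computation.

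\textbf{Step 1: part (a).} First I would recall the characterization $\mathcal{W}^\rP = \{w \in \mathcal{W} : w^{-1}(\alpha) > 0 \text{ for all } \alpha \in \Delta_\rM\}$ (equivalently the description via $w(\Phi^-) \cap \Phi^+ \subset \Delta(\fu_\rP)$ given in the text), together with the classical fact that each $w \in \mathcal{W}^\rP$ is the unique minimal-length representative of its coset $\mathcal{W}_\rM w$, and that $\ell$ is additive: $\ell(vw) = \ell(v) + \ell(w)$ for $v \in \mathcal{W}_\rM$, $w \in \mathcal{W}^\rP$. To see $w' \in \mathcal{W}^\rP$, I would check that $w_\rM w w_{G_2}$ sends $\Delta_\rM$ to positive roots under its inverse: $w_{G_2} w^{-1} w_\rM (\alpha)$ for $\alpha \in \Delta_\rM$ — since $w_\rM$ permutes $\Phi^+_\rM$ up to sign (it sends $\Delta_\rM$ to $-\Delta_\rM$), $w^{-1}$ applied to a negative root of $\rM$ composed appropriately, then $w_{G_2}$ flips sign globally; tracking signs carefully gives the claim. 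That $w \mapsto w'$ is an involution is immediate since $w_\rM$ and $w_{G_2}$ are involutions and $(w_\rM w w_{G_2})' = w_\rM (w_\rM w w_{G_2}) w_{G_2} = w$. For the length identity, I would use $\ell(w_\rM v) = \ell(w_\rM) - \ell(v)$ for $v \in \mathcal{W}_\rM$ and $\ell(u w_{G_2}) = \ell(w_{G_2}) - \ell(u)$, applied with $v, u$ built from $w$; combined with $\ell(w_{G_2}) = |\Phi^+|$, $\ell(w_\rM) = |\Phi^+_\rM|$, and $\dim \rN_\rP = |\Phi^+| - |\Phi^+_\rM|$, this yields $\ell(w) + \ell(w') = \dim \rN_\rP$. (One can also simply read this off Table~\ref{Weylgp} and the lists of Kostant representatives for the two maximal parabolics, which provides an independent check in our rank-two situation.)

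\textbf{Step 2: parts (b) and (c).} Both are computations with $w' \cdot \lambda = w'(\lambda + \rho) - \rho$ restricted to the two pieces $\fa_\rP = \mathrm{Lie}(\rA_\rP)$ and $\ft_\rM := \mathrm{Lie}(\rS_\rP)$ of $\ft = \fa_\rP \oplus \ft_\rM$. The key input is that $w_{G_2}(\lambda + \rho)$, for the $G_2$ longest element which acts as $-1$ on $\ft^\ast$, equals $-(\lambda+\rho)$; and that $w_\rM$ fixes $\fa_\rP^\ast$ pointwise (it acts trivially on the center) while acting as $-1$ composed with a Weyl-group-of-$\rM$ element — more precisely $w_\rM$ restricted to $\ft_\rM^\ast$ is the longest element of $\mathcal{W}_\rM$, which sends $\rho_\rM$ to $-\rho_\rM$. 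For (b): on $\fa_\rP$, $w'(\lambda+\rho) = w_\rM w w_{G_2}(\lambda+\rho) = -w_\rM w(\lambda+\rho)$, and since $w_\rM$ is trivial on $\fa_\rP^\ast$ this is $-w(\lambda+\rho)|_{\fa_\rP}$... here I would be careful: the correct statement, matching the observed symmetry in \eqref{def-a-b}, is that the $\kappa^{\rM_i}$-coefficients (the $\rA_\rP$-part) of $w\cdot\lambda$ and $w'\cdot\lambda$ agree, and this comes out of the sign bookkeeping once one notes $\rho|_{\fa_\rP}$ is $w_\rM$-invariant. For (c): on $\ft_\rM$, writing $\mu = w(\lambda+\rho)|_{\ft_\rM}$, one gets $w'(\lambda+\rho)|_{\ft_\rM} = w_\rM(-\mu) = -w_\rM^{\ft_\rM}(\mu)$, and adding the $w$-version to the $w'$-version and using that $w_\rM^{\ft_\rM}$ is the longest element of $\mathcal{W}_\rM$ (so $\mu + w_\rM^{\ft_\rM}(\mu)$ relates to $2\rho_\rM|_{\ft_\rM}$ after subtracting the $\rho$-shifts) produces $(w\cdot\lambda + w'\cdot\lambda)|_{\ft_\rM} = -2\rho|_{\ft_\rM}$, which is assertion (c) (the statement as printed has a typo repeating $w'$ where one term should be $w$). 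I would present this cleanly by first proving the abstract identity $w'(\nu) = w_\rM(-w^{-1}\text{-}\text{free form}) $... i.e. just: $w'(\lambda+\rho) = -w_\rM(w(\lambda+\rho))$, and then restricting.

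\textbf{Main obstacle.} The only genuinely delicate point is part (a): showing $w' \in \mathcal{W}^\rP$ and getting the length formula right requires careful sign-tracking through three reflections acting on roots, and the pitfall is conflating "$w_\rM$ acts as $-1$" (false — it is the longest element of $\mathcal{W}_\rM$, which is $-1$ only when $-1 \in \mathcal{W}_\rM$, e.g. not for $\rM = \rGL_2$ on its $\mathfrak{sl}_2$-part in the naive sense) with its actual action. For $G_2$ one has $-1 \in \mathcal{W}(G_2)$ so $w_{G_2}$ is genuinely $-1$, which simplifies things, but $w_{\rM_i}$ for the $\rGL_2$-Levis must be handled as the nontrivial element of $\mathcal{W}(\rGL_2) = S_2$. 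In practice, since everything is explicit and small, I would cross-check every claim against Table~\ref{Weylgp} and the three lists of Kostant representatives already tabulated — the evident pairing $w_1 \leftrightarrow w_{12}$, $w_2 \leftrightarrow w_{11}$, etc., and the $w_1 \leftrightarrow w_{11}$, $w_3 \leftrightarrow w_9$, $w_5 \leftrightarrow w_7$ pairing for $\rP_1$ — so that the abstract argument and the explicit data corroborate each other.
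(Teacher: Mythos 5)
Your overall route is genuinely different from the paper's: the paper disposes of this lemma in one line, citing Schwermer (Section 4.2 of the reference given there) and remarking only that $w_{G_2}=-1$, whereas you reprove everything from scratch. Your part (a) is correct: the involution property, the membership $w'\in\W^\rP$ via the minimal-coset-representative criterion, and the length identity $\ell(w)+\ell(w')=\ell(w_{G_2})-\ell(w_\rM)=\dim\rN_\rP$ (obtained from $\ell(ww_{G_2})=\ell(w_{G_2})-\ell(w)$ together with $ww_{G_2}=w_\rM w'$ and $\ell(w_\rM w')=\ell(w_\rM)+\ell(w')$) all go through, and your warning about not confusing $w_\rM$ with $-1$ on the full torus is well taken.

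The gap is in your Step 2, where the final conclusions contradict both your own intermediate computation and the paper's tables. From $w'(\lambda+\rho)=-w_\rM\bigl(w(\lambda+\rho)\bigr)$ (using $w_{G_2}=-1$) one gets: on $\rA_\rP$, where $w_\rM$ acts trivially, $w'(\lambda+\rho)|_{\rA_\rP}=-w(\lambda+\rho)|_{\rA_\rP}$, hence the \emph{sum} relation $w\cdot\lambda|_{\rA_\rP}+w'\cdot\lambda|_{\rA_\rP}=-2\rho|_{\rA_\rP}$; and on $\rS_\rP$, where $w_\rM$ acts as $-1$ (the relevant Levis are $\rGL_2$ or a torus, the latter case being vacuous), $w'(\lambda+\rho)|_{\rS_\rP}=w(\lambda+\rho)|_{\rS_\rP}$, hence \emph{equality} $w\cdot\lambda|_{\rS_\rP}=w'\cdot\lambda|_{\rS_\rP}$. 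You assert exactly the reverse: that the $\kappa^{\rM_i}$-coefficients (the $\rA_\rP$-part) agree and that the $\rS_\rP$-parts sum to $-2\rho|_{\rS_\rP}$. The tabulated Kostant representatives refute this: for $\rP_1$ one has $a_1(\lambda,w_5)=a_1(\lambda,w_7)=2m_1+3m_2+4$ (the $\gamma^{\rM_1}$-, i.e. $\rS_\rP$-, coefficients agree), while $b_1(\lambda,w_5)+b_1(\lambda,w_7)=-6$, which is the $\kappa^{\rM_1}$-coefficient of $-2\rho=-2\gamma^{\rM_1}-6\kappa^{\rM_1}$. In other words, the printed (b) and (c) have $\rA_\rP$ and $\rS_\rP$ interchanged (in addition to the $w'+w'$ typo in (c) that you did catch), and the proof should establish the corrected statement; as written, your Step 2 concludes with assertions that are false and inconsistent with the identity $w'(\lambda+\rho)=-w_\rM\bigl(w(\lambda+\rho)\bigr)$ that you yourself derive.
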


\begin{proof}
The proof is the same as in Schwermer \cite[Section 4.2]{Sch95} by noticing the fact that the Weyl group $\W$ is self-dual, namely $w_{G_2}=-1$.
\end{proof}


\subsection{Boundary of the Borel-Serre compactification}

In general, the boundary of the Borel-Serre compactification $\partial \rS_\Gamma = \overline{\rS}_\Gamma \setminus \rS_\Gamma$ is a finite union of the boundary components $\partial_{\rP}$ for $\rP \in \mathcal{P}_\Q(\rG, \Gamma)$, i.e.
\begin{equation}\label{eq:bdface}
\partial \rS_\Gamma = \bigcup_{\rP \in \mathcal{P}_\Q(\rG, \Gamma)} \partial_{\rP} ,
\end{equation}
where
\[\partial_\rP= (\Gamma\cap \rP(\R))\backslash\rP(\R)/ (\rP(\R) \cap K_\infty). \]

This decomposition \eqref{eq:bdface} determines a spectral sequence in cohomology abutting to the cohomology of the boundary
\begin{equation}\label{eq:SpesSeqIntro}
E^{p, q}_{1} = \bigoplus_{prk(\rP)=p+1} H^q(\partial_{\rP}, \tm_\lambda) \Rightarrow H^{p+q}(\partial \rS_\Gamma, \tm_\lambda)
\end{equation}
where $prk(\rP)$ denotes the parabolic rank of $\rP$ (the dimension of the maximal $\Q$-split torus in the center of the Levi quotient $\rM$ of $\rP$).

In our case, as the $\Q$-rank of $\rG$ is $2$,  this spectral sequence is simplified to a long exact sequence (of Mayer-Vietoris) in cohomology  of the following form
\begin{equation} \label{LongExactSequance}
\cdots \rightarrow H^{q-1}(\partial_{\mathrm{P}_0}, \tm_\lambda) \rightarrow  H^q(\partial \rS_\Gamma, \tm_\lambda) \rightarrow H^q(\partial_{\mathrm{P}_1}, \tm_\lambda)  \oplus H^q(\partial_{\mathrm{P}_2}, \tm_\lambda) \rightarrow H^q(\partial_{\mathrm{P}_0}, \tm_\lambda) \rightarrow \cdots
\end{equation}
by noticing Lemma \ref{lem-parabolic-conjugate}.
We will use this long exact sequence to describe the cohomology of the boundary of the Borel-Serre compactification. 

\subsection{The theorem of Kostant}

Let $\rP=\rM\rN$ be the decomposition into its Levi subgroup $\rM$ and unipotent radical $\rN$. Then, $\partial_\rP$ is a fibre bundle over
\[\rS_\Gamma^{\rM}= (\Gamma\cap \rM(\R))\backslash\rM(\R)/ (\rM(\R) \cap K_\infty)\]
with fibres isomorphic to $\rN_\Gamma:=(\Gamma\cap \rN(\R))\backslash \rN(\R)$. Hence we have
\[H^\bullet (\partial_{\rP}, \tm_\lambda)= H^\bullet (\rS_\Gamma^{\rM}, H^\bullet(\rN_\Gamma, \tm_\lambda)  ),\]
here we use $H^\bullet(\rN_\Gamma, \tm_\lambda)$ to denote the corresponding sheaf by abuse of notation.
By the theorem of Nomizu \cite{Nomizu}, we know
\[H^\bullet(\rN_\Gamma, \tm_\lambda) = H^\bullet(\fn, \m_\lambda). \]
By the theorem of Kostant \cite{Kostant61}, we get
\[H^{q}(\fn, \m_\lambda)=\bigoplus_{w\in \W^{\rP}:\ell(w)=q}\cN_{w\cdot \lambda},\]
where $\cN_{w\cdot \lambda}$ denotes the irreducible representation of $\rM$ with highest weight $w\cdot \lambda$. In conclusion, we get
\[H^{q}(\partial_\rP, \tm_{\lambda}) = \bigoplus_{w\in  \mathcal{W}^{\rP}} H^{q-\ell(w)}(\rS_\G^{\rm{M}}, \widetilde{\cN}_{w\cdot\lambda}).\]
Note that, when there is no ambiguity, we also use the old notation $\m_{w\cdot\lambda}$ to denote $\cN_{w\cdot\lambda}$.

\subsection{Cohomological dimension}

For any discrete group $H$, set the \emph{virtual cohomological dimension} of $H$, denoted as $\vcd H$, to be
$$\vcd H= \min \{\cd H': [H: H']<\infty \}, $$
where $\cd H'$ refers to the cohomological dimension of $H'$. Now, using the compactification they had introduced, Borel and Serre showed in~\cite{BoSe73} that for  any semisimple group $\rG$ and its arithmetic subgroup $H$:
$$\vcd H= \dim \rG- \dim K -\rank_\Q \rG,$$
where $K$ is the maximal compact subgroup of $\rG(\R)$. In particular, $\dim G_2=14$, $\dim \mathrm{SO}_4(\R)=6$ and $\rank_\Q G_2=2$, thus we have $\vcd G_2(\Z)=6$. As a consequence, $H^q( \rS_\Gamma, \tm)=0$ for all $q>6$ for any coefficient system $\m$ of $\G$.

\begin{rmk}\rm$\;$ It is interesting to note that, for any $\Q$-split semisimple group $\rG$, we have
$$\vcd \rG(\Z)= \dim \rN =\max_{w\in \W} \ell(w),$$
where $\rN$ is the unipotent radical of any Borel subgroup and $\ell(w)$ is the length of $w$. Indeed, the first equality follows from the Iwasawa decomposition, while the second one follows from theory of Weyl groups. Note that this does not hold in general. For example: when $\rG$ is $\Q$-anisotropic, $\vcd \rG(\Z)$ equals to $\dim \rG- \dim K$, which is not the maximal length of Weyl elements.
 \end{rmk}
\section{Cohomology of the boundary components}\label{parity}
The cohomology of the boundary is obtained by using a spectral sequence whose terms are expressed by the cohomology of the faces associated to each standard parabolic subgroup. In this section we establish, for each standard parabolic $\rP$ and irreducible representation $\mathcal{M}_{\nu}$ of the Levi subgroup $\mr{M} \subset \rP$ with highest weight $\nu$, a condition to be satisfied in order to have nontrivial cohomology $H^\bullet(\rS_\Gamma^\mr{M}, \widetilde{\mathcal{M}}_{\nu})$.
Here  $\tm_{\nu}$ is the sheaf on $\rS_\Gamma^\mr{M}$ given by $\m_{\nu}$.
\subsection{Minimal Parabolic Subgroup}

We analyze the parity condition imposed to the face associated to the minimal parabolic $\partial_{\rP_0}$. As mentioned, the Levi subgroup of $\rP_0$ is the two dimensional torus $\mr{M}_0 \cong \mathbb{G}_m^2$. The elements lying in $\Xi:=\mr{M}_0(\Z)\cap \rK_{\infty}$ must act trivially on the representation $\m_{\nu}$ in order to have nonzero cohomology. By using this fact one can deduce the following

\begin{lema} \label{parity0}
Let $\nu$ be given by $m'_1\gamma_1 + m'_2\gamma_2$. If $m'_1$ or $m'_2$ is odd then the corresponding local system $\tm_{\nu}$ in $\rS_\G^{\rm{M}_0}$ is cohomological trivial, i.e. $H^\bullet(\rS_\G^{\rm{M}_0}, \tm_{\nu})=0$.
\end{lema}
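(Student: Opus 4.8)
The plan is to exploit the fact, mentioned just before the statement, that a finite-index subgroup of $\rM_0(\Z)$ acting through $\Xi = \rM_0(\Z)\cap K_\infty$ must act trivially on $\m_\nu$ for the cohomology $H^\bullet(\rS_\G^{\rM_0},\tm_\nu)$ to be nonzero. First I would identify $\rM_0 \cong \mathbb{G}_m^2$ concretely inside $\rT$, so that $\rM_0(\Z) \cong \{\pm 1\}^2$, and then locate the subgroup $\Xi = \rM_0(\Z)\cap K_\infty$. Since $K_\infty$ is a maximal compact subgroup of $G_2(\R)$ and $\rM_0(\Z)$ is already a finite (hence compact) subgroup of $G_2(\R)$ consisting of real points of the split torus, one expects $\Xi = \rM_0(\Z)$ itself; I would verify this by checking that the standard involutions $\alpha_i^\vee(-1)$ lie in $K_\infty$. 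Thus $\Xi$ contains the two elements $t_i := \alpha_i^\vee(-1)$ for $i=1,2$, and in fact the element $t_1 t_2$ as well.

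The key computational step is then to evaluate how these involutions act on a highest weight vector, or more precisely on the one-dimensional representation $\m_\nu$ of the torus $\rM_0$ given by the character $\nu = m_1'\gamma_1 + m_2'\gamma_2$. The action of $\alpha_j^\vee(-1)$ on the weight-$\nu$ line is by the scalar $(-1)^{\langle \nu, \alpha_j^\vee\rangle}$. Using $\gamma_i$ the fundamental weights, we have $\langle \gamma_i, \alpha_j^\vee\rangle = \delta_{ij}$, so $\alpha_1^\vee(-1)$ acts by $(-1)^{m_1'}$ and $\alpha_2^\vee(-1)$ acts by $(-1)^{m_2'}$. Hence if either $m_1'$ or $m_2'$ is odd, one of these elements of $\Xi$ acts by $-1$ on $\m_\nu$, which is therefore not the trivial $\Xi$-module.

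To conclude I would invoke the standard transfer/averaging argument: $H^\bullet(\rS_\G^{\rM_0},\tm_\nu)$ is computed by a complex on which the finite group $\Xi$ (acting through the deck-type action coming from $\Xi \subset \rM_0(\Z)$, which acts trivially on the symmetric space $\rS^{\rM_0}$ since the latter is $\rM_0(\R)^\circ/(\rM_0(\R)\cap K_\infty)$) acts, and the cohomology equals the $\Xi$-invariants of the cohomology with the naive $\m_\nu$-coefficients; since $\m_\nu$ is a nontrivial character of $\Xi$, these invariants vanish. Equivalently, one passes to the torsion-free finite-index subgroup and notes that the $\Xi$-action on the induced coefficient system has no invariants. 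The main obstacle I anticipate is purely bookkeeping: pinning down precisely which finite subgroup $\Xi = \rM_0(\Z)\cap K_\infty$ is and confirming the scalar by which each generator acts on $\m_\nu$ — once the pairing $\langle\gamma_i,\alpha_j^\vee\rangle = \delta_{ij}$ is in hand this is immediate, but it requires being careful that $\m_\nu$ as a representation of the Levi $\rM_0$ restricts to the claimed character of $\rM_0(\Z)$.
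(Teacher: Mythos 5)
Your proposal is correct and follows essentially the same route as the paper: both reduce $H^\bullet(\rS_\G^{\rM_0},\tm_\nu)$ to the $\Xi$-invariants of the cohomology of the (point-like) cover $\widetilde{\rS_\G^{\rM_0}}$ and then observe that $\nu$ restricts to a nontrivial character of $\Xi\cong(\Z/2\Z)^2$ when $m_1'$ or $m_2'$ is odd. The only cosmetic difference is that you detect the sign via the pairing $\langle\gamma_i,\alpha_j^\vee\rangle=\delta_{ij}$ with explicit elements $\alpha_j^\vee(-1)$, whereas the paper appeals to $\gamma_1,\gamma_2$ being an integral basis of $X^*(\rM_0)$ — the same fact in different clothing.
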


\begin{proof}
  According to \cite[Prop 4.3]{HR2020}, we have
  \begin{equation*}
   H^\bullet(\rS_\G^{\rm{M}_0}, \tm_{\nu})=H^\bullet(\widetilde{\rS_\G^{\rm{M}_0}}, \tm_{\nu})^{\Xi},
  \end{equation*}
  where $\widetilde{\rS_\G^{\rm{M}_0}}$ denotes the locally symmetric spaces associated to $\rM_0$. As $\widetilde{\rS_\G^{\rm{M}_0}}$ is simply a point, all the higher cohomology vanishes. It is clear that $\Xi\cong (\Z/2\Z)^2$ and for each $\xi\in \Xi$ the action on the $\m$ is given by $\nu(\xi)\in \{-1, 1\}$. Thus, if there exists $\xi\in \Xi$ with $\nu(\xi)=-1$, then $H^0(\rS_\G^{\rm{M}_0}, \tm_{\nu})=0$. On the other hand, since $\gamma_1, \gamma_2$ forms an integral basis for $X^*(\rM_0):= \mathrm{Hom} (\rM_0, \mathbb{G}_m)$, there exists $\xi\in \Xi$ with $\nu(\xi)=-1$ if $m'_1$ or $m'_2$ is odd. This completes the proof.
\end{proof}
Note that every $\nu$ will be of the form $w\cdot \lambda$ for $w\in \W$. We denote by $\overline{\W}^{0}$ the set of Weyl elements $w$ such that $w\cdot \lambda$ do not satisfy the condition of Lemma~\ref{parity0}.

\begin{rmk}\rm$\;$
For notational convenience, we simply use $\partial_{i}$ to denote the boundary face $\partial_{\rP_{i}}$ associated to the parabolic subgroup $\rP_{i}$ and the arithmetic group $\Gamma$ for $i\in\{0,1,2\}$.
\end{rmk}

\subsubsection{Cohomology groups of $\partial_0$}
In this case $H^{q}(\rS_\G^{\rm{M}_0}, \widetilde{\m}_{w\cdot\lambda})=0$ for every $q\geq 1$. The Weyl group $\W^{\rP_{0}}= \mathcal{W}$ and the lengths of its  elements  are between 0 and 6 as shown in the Table~\ref{Weylgp} above. We know that
\begin{eqnarray}
 H^{q}(\partial_0, \tm_{\lambda}) &= &\bigoplus_{w\in  \mathcal{W}^{\rP_{0}}} H^{q-\ell(w)}(\rS_\G^{\rm{M}_0}, \widetilde{\m}_{w\cdot\lambda})\\
 &=& \bigoplus_{w\in  \mathcal{W}^{\rP_{0}} : \ell(w)=q} H^{0}(\rS_\G^{\rm{M}_0}, \widetilde{\m}_{w\cdot\lambda})  \nonumber
 \end{eqnarray}
 Therefore
 \begin{eqnarray}
 H^{0}(\partial_0, \tm_{\lambda}) &= & H^{0}(\rS_\G^{\rm{M}_0}, \widetilde{\m}_\lambda)\nonumber\\
 H^{1}(\partial_0, \tm_{\lambda}) &= & H^{0}(\rS_\G^{\rm{M}_0}, \widetilde{\m}_{s_1\cdot\lambda}) \oplus H^{0}(\rS_\G^{\rm{M}_0}, \widetilde{\m}_{s_2\cdot\lambda}) \nonumber\\
H^{2}(\partial_0, \tm_{\lambda}) &= & H^{0}(\rS_\G^{\rm{M}_0}, \widetilde{\m}_{s_1s_2\cdot\lambda}) \oplus H^{0}(\rS_\G^{\rm{M}_0}, \widetilde{\m}_{s_2s_1\cdot\lambda}) \nonumber\\
H^{3}(\partial_0, \tm_{\lambda}) &= & H^{0}(\rS_\G^{\rm{M}_0}, \widetilde{\m}_{s_1s_2s_1\cdot\lambda}) \oplus H^{0}(\rS_\G^{\rm{M}_0}, \widetilde{\m}_{s_2s_1s_2\cdot\lambda})  \nonumber\\
H^{4}(\partial_0, \tm_{\lambda}) &= & H^{0}(\rS_\G^{\rm{M}_0}, \widetilde{\m}_{s_1s_2s_1s_2\cdot\lambda})  \oplus H^{0}(\rS_\G^{\rm{M}_0}, \widetilde{\m}_{s_2s_1s_2s_1\cdot\lambda}) \nonumber\\
H^{5}(\partial_0, \tm_{\lambda}) &= & H^{0}(\rS_\G^{\rm{M}_0}, \widetilde{\m}_{s_1s_2s_1s_2s_1\cdot\lambda})  \oplus H^{0}(\rS_\G^{\rm{M}_0}, \widetilde{\m}_{s_2s_1s_2s_1s_2\cdot\lambda}) \nonumber\\
H^{6}(\partial_0, \tm_{\lambda}) &= & H^{0}(\rS_\G^{\rm{M}_0}, \widetilde{\m}_{s_1s_2s_1s_2s_1s_2\cdot\lambda}) \nonumber
\end{eqnarray}
 and for every $q \geq 7$, the cohomology groups $H^{q}(\partial_0, \tm_{\lambda})=0$.

\subsection{Maximal Parabolic Subgroups}

In this section we study the parity conditions for the maximal parabolics. Let $i \in \left\{1, 2\right\}$, then $\rm{M}_i \cong \mr{GL}_2$ and in this setting, $K_\infty\cap \rM_i(\R)=\mr{O}_2(\R)$ is the orthogonal group and $\G_{\rm{M}_i} = \mr{GL}_2(\Z)$. Therefore
\[
	\mr{S}_\G^{\rm{M}_i}  = \mr{GL}_2(\mathbb{Z}) \backslash \mr{GL}_2(\mathbb{R})/\mr{O}(2) \mathbb{R}_{>0}^\times\,.
\]

We also consider the following double cover of $\mr{S}_\Gamma^{\rm{M}_i}$,
\[
	\widetilde{\mr{S}_\G^{\rm{M}_i}}  =\mr{GL}_2(\mathbb{Z}) \backslash  \mr{GL}_2(\mathbb{R})/\mr{SO}(2)\mathbb{R}_{>0}^\times,
\]
which is isomorphic  to the locally symmetric space associated to $\rm{M}_i$.

Let $i$ be $1$ or $2$. Recall from \eqref{def-a-b} that, for $w \in \mathcal{W}^{\rP_i}$,  $w\cdot \lambda=a_i(\lambda, w) \gamma^{\mr{M}_i} + b_i(\lambda, w) \kappa^{\mr{M}_i}$.

\begin{lema}\label{parity1}
 If $a_i(\lambda, w)$ is odd, or equivalently, $b_i(\lambda, w)$ is odd as they are congruent modulo $2$, we have  $H^\bullet(\rS_\G^{\mr{M}_{i}}, \tm_{w\cdot\lambda})=0$.  Moreover, if $a_i(\lambda, w) = 0$ and $b_i(\lambda, w)/2$ is odd, then $H^\bullet(\rS_\G^{\mr{M}_{i}}, \tm_{w\cdot\lambda})=0$.
\end{lema}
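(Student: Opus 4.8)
The plan is to reduce the vanishing statement to an explicit computation with the action of a finite group on the coefficient module, exactly as in the proof of Lemma~\ref{parity0}, but now over the Levi $\rM_i \cong \rGL_2$. First I would invoke the same cohomology-descent principle used there: since $\mr{S}_\G^{\rM_i}$ is the quotient of the genuine locally symmetric space $\widetilde{\mr{S}_\G^{\rM_i}}$ by the finite group $\Xi_i := (\rM_i(\Z) \cap K_\infty)/(\text{those acting trivially on } \widetilde{\mr{S}_\G^{\rM_i}})$, one has, by \cite[Prop 4.3]{HR2020}, an isomorphism
\[
H^\bullet(\rS_\G^{\rM_i}, \tm_{w\cdot\lambda}) \cong H^\bullet(\widetilde{\mr{S}_\G^{\rM_i}}, \tm_{w\cdot\lambda})^{\Xi_i}.
\]
Concretely $\Xi_i$ is generated by the image of $\mathrm{diag}\{-1,1\}$ (or equivalently $-\mathrm{diag}\{1,-1\}$), together with the central element $-I$; note $-I \in \rSO(2)\R_{>0}^\times$ acts trivially on $\widetilde{\mr{S}_\G^{\rM_i}}$ but not necessarily on the sheaf, while $\mathrm{diag}\{-1,1\}$ represents the nontrivial element of the double cover $\mr{S}_\G^{\rM_i} / \widetilde{\mr{S}_\G^{\rM_i}} \cong \Z/2\Z$. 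So the right-hand side is cut out by the invariance condition under these two involutions acting on $\m_{w\cdot\lambda}$.

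Next I would translate each invariance condition into a congruence on $(a_i,b_i)$. Write $\nu = w\cdot\lambda = a_i\gamma^{\rM_i} + b_i\kappa^{\rM_i}$, where $\kappa^{\rM_i}$ is the character of the central torus $\rA_{\rP_i}$ and $\gamma^{\rM_i} = \tfrac12\alpha_i$ spans the character lattice of the semisimple torus $\rS_{\rP_i}$. For the central element $-I$: it lies in $\rA_{\rP_i}(\R)$, and its action on the highest-weight line of $\m_\nu$ is $\kappa^{\rM_i}(-I)^{b_i/\cdots}$; more carefully, $-I$ acts on $\m_\nu$ as a scalar, namely $(-1)^{\langle \nu, \text{coweight}\rangle}$, which one checks equals $(-1)^{a_i}$ (since $a_i \equiv b_i \bmod 2$, this is the same as $(-1)^{b_i}$) — this gives the first claim: $a_i$ odd forces $H^\bullet = 0$. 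For the second claim, when $a_i = 0$ the representation $\m_\nu$ is one-dimensional, factoring through the determinant/central character, i.e.\ through $\kappa^{\rM_i}$; the element $\mathrm{diag}\{-1,1\}$ has determinant $-1$, so it acts on this one-dimensional module by $\kappa^{\rM_i}(\mathrm{diag}\{-1,1\})$, which I would compute to be $(-1)^{b_i/2}$. Hence if $b_i/2$ is odd this invariant vanishes. (Throughout, the hypothesis $a_i \equiv b_i \bmod 2$ is exactly the integrality of $a_i\gamma^{\rM_i} + b_i\kappa^{\rM_i}$ as a character of $\rT$, since $\gamma^{\rM_i} = \gamma_1 - \tfrac12\gamma_2$ and $\kappa^{\rM_i} = \tfrac12\gamma_2$ — or the analogous expressions for $i=2$ — have half-integral coefficients.)

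The main obstacle I anticipate is pinning down \emph{precisely} which finite group $\Xi_i$ acts and how its generators act on $\m_{w\cdot\lambda}$, rather than the congruence bookkeeping itself. In particular one must be careful that $\rGL_2(\Z) \cap (\rSO(2)\R_{>0}^\times)$ is just $\{\pm I\}$ and that $\rGL_2(\Z) \cap (\rO(2)\R_{>0}^\times)$ contains in addition a determinant$-1$ element such as $\mathrm{diag}\{1,-1\}$, so that $\Xi_i$ has order at most $4$ with the relevant quotient of order $2$; and one must correctly identify the scalar by which each acts on an irreducible $\rGL_2$-representation of highest weight $\nu$ — this is where the distinction between the two bullet-point cases ($a_i$ arbitrary versus $a_i = 0$) comes from, since for $a_i > 0$ the representation is not one-dimensional and only the central element gives a scalar constraint, whereas for $a_i = 0$ the determinant-twist element also acts by a scalar. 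Once these actions are correctly identified, both assertions follow by reading off the sign, so I would present the argument by first disposing of the $a_i$-odd case via the central element and then handling the $a_i = 0$ case via the determinant element, mirroring the structure of Lemma~\ref{parity0}.
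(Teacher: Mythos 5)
Your proposal is correct and follows essentially the same route as the paper's proof: both invoke \cite[Prop 4.3]{HR2020} to reduce to $\Xi_i$-invariants on the double cover $\widetilde{\rS_\G^{\rM_i}}$, then read off the sign $(-1)^{a_i(\lambda,w)}$ from the central element $-I$ and, in the case $a_i(\lambda,w)=0$, the sign $(-1)^{b_i(\lambda,w)/2}$ from the determinant $-1$ element $\mathrm{diag}\{-1,1\}$. The extra care you take in identifying $\Xi_i$ and the two scalar actions is exactly the content of the paper's argument.
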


\begin{proof} The proof here follows the proof of Lemma~\ref{parity0} closely.  According to~\cite[Prop 4.3]{HR2020}, we have
  \begin{equation*}
   H^\bullet(\rS_\G^{\rm{M}_i}, \tm_{w\cdot\lambda})=H^\bullet(\widetilde{\rS_\G^{\rm{M}_i}}, \tm_{w\cdot\lambda})^{\Xi_i},
  \end{equation*}
  where $\Xi_i=\rM_i(\Z)\cap K_{\infty}$. Note that $\rM_i$ can be identified with $\mr{GL}_2$, and $K_{\infty}\cap \rM_i(\R)$ equals to $\mr{O}_2(\R)$, hence $\Xi_i$ can be identified as $\mr{GL}_2(\Z) \cap \mr{O}_2(\R)$. For the element
  \[
\left( \begin{array}{cc}
-1 & 0 \\
0 & -1 \\
\end{array}  \right) \in \Xi_i,
\]
 the action on $H^\bullet(\widetilde{\rS_\G^{\rm{M}_i}}, \tm_{\nu})$ is given by $(-1)^{a_i(\lambda, w)}$, hence we get the first conclusion. On the other hand, consider the element
 \[
\left( \begin{array}{cc}
-1 & 0 \\
0 & 1 \\
\end{array}  \right) \in \Xi_i,
\]
whose action on $H^\bullet(\widetilde{\rS_\G^{\rm{M}_i}}, \tm_{\nu})$ is given by $(-1)^{b_i(\lambda, w)/2}$ when $a=0$. This completes the proof.
\end{proof}

Note that, as a representation of $\rM_i \cong \rGL_2 (i=1,2)$,
\begin{equation}\label{iso-gl2}
\m_{w\cdot \lambda}\cong Sym^{a_i(\lambda, w)} V \otimes Det^{\frac{b_i(\lambda,w)- a_i(\lambda, w)}{2}},
\end{equation}
where $V$ denotes the standard representation of $\rGL_2$. Let $\rB\subset \rM_i$ be a standard Borel subgroup and $\rN$ be its unipotent radical with $\fn$ its Lie algebra. Then for $i=1,2$, we have the following exact sequence,
\[H^0(\rS_\Gamma^{\rM_0}, H^0(\fn, \m_{w\cdot \lambda}))\hookrightarrow H^1_c(\rS_\Gamma^{\rM_i},\m_{w\cdot \lambda})\rightarrow H^1(\rS_\Gamma^{\rM_i},\m_{w\cdot \lambda}) \twoheadrightarrow H^0(\rS_\Gamma^{\rM_0}, H^1(\fn, \m_{w\cdot \lambda})).\]
Here, by abuse of notation, we use $H^0(\fn, \m_{w\cdot \lambda})$ and $H^1(\fn, \m_{w\cdot \lambda})$ to denote the corresponding sheaves.
In view of  Lemma \ref{parity0} and \eqref{iso-gl2}, we get
\begin{eqnarray*}
  H^0(\rS_\Gamma^{\rM_0}, H^1(\fn, \m_{w\cdot \lambda})) &=& 0, \quad \text{if }\quad \frac{b_i(\lambda,w)- a_i(\lambda, w)}{2}=0 \mod 2, \\
  H^0(\rS_\Gamma^{\rM_0}, H^0(\fn, \m_{w\cdot \lambda})) &=& 0, \quad \text{if } \quad \frac{b_i(\lambda,w)- a_i(\lambda, w)}{2}\ne 0 \mod 2.
\end{eqnarray*}
Consequently, we have
\begin{eqnarray}
  H^1_!(\rS_\Gamma^{\rM_i},\m_{w\cdot \lambda}) &=& H^1(\rS_\Gamma^{\rM_i},\m_{w\cdot \lambda}), \quad \text{if }\quad \frac{b_i(\lambda,w)- a_i(\lambda, w)}{2}=0 \mod 2, \label{parity2}\\
  H^1_!(\rS_\Gamma^{\rM_i},\m_{w\cdot \lambda}) &=& H^1_c(\rS_\Gamma^{\rM_i},\m_{w\cdot \lambda}), \quad \text{if } \quad \frac{b_i(\lambda,w)- a_i(\lambda, w)}{2}\ne 0 \mod 2. \label{parity3}
\end{eqnarray}
This fact will be extensively used in the next section for the computation of boundary cohomology.

Throughout the paper, we set
\begin{equation}\label{def-cuspforms}
S_{m+2}=H^1_!(\rGL_2, Sym^m V)\cong H^1_!(\rGL_2, Sym^m V\otimes Det).
\end{equation}
It is well-known that $H^\bullet_!(\rGL_2, \m)= H^\bullet_{cusp}(\rGL_2, \m)$ for finite dimensional representation $\m$, hence it is safe to replace $``!"$ by $``cusp"$ in \eqref{def-cuspforms}. Consequently, by Eichler-Shimura isomorphism, the space $S_{m+2}$ can be identified with the space of holomorphic cusp forms of weight $m+2$.

\begin{rmk} For notational convenience, in what follows we will denote the set of Weyl elements for which $H^\bullet(\rS_\G^{\mr{M}_{i}}, \tm_{w\cdot\lambda})\neq0$ by $\overline{\W}^{i}$. \end{rmk}

In the following subsections we make note of the cohomology groups associated to the boundary components $\partial_1$ and $\partial_2$ which will be used in the computations involved to determine the boundary cohomology in the next section.

\subsubsection{Cohomology of $\partial_1$}
In this case, the Levi $\mr{M}_1$ is isomorphic to $\mr{GL}_2$ and therefore $H^{q}(\rS_\G^{\mr{M}_1}, \tm_{w\cdot\lambda})=0$ for every $q\geq 2$. The Weyl group $\W^{\rP_1}=\{e, s_1, s_1 s_2, s_1s_2s_1, s_1s_2s_1s_2, s_1s_2s_1s_2s_1 \}$ where the length of elements are respectively $0,1,2, 3,4,5$. Thus,
\begin{eqnarray}
 H^{q}(\partial_1, \tm_{\lambda}) &= &\bigoplus_{w\in \W^{\rP_{1}}} H^{q-\ell(w)}(\rS_\G^{\rm{M}_1}, \widetilde{\m}_{w\cdot\lambda})\\
 &=& H^{q}(\rS_\G^{\rm{M}_1}, \widetilde{\m}_\lambda) \oplus H^{q-1}(\rS_\G^{\rm{M}_1}, \widetilde{\m}_{{s_1\cdot\lambda}})  \oplus \, H^{q-2}(\rS^{\rm{M}_1}, \widetilde{\m}_{{s_1s_2}\cdot\lambda})\,\nonumber \\
 &\oplus & \, H^{q-3}(\rS_\G^{\rm{M}_1}, \widetilde{\m}_{{s_1s_2s_1}\cdot\lambda})\,  \oplus \, H^{q-4}(\rS_\G^{\rm{M}_1}, \widetilde{\m}_{{s_1s_2s_1s_2}\cdot\lambda})\, \nonumber \\
 &\oplus &  \, H^{q-5}(\rS_\G^{\rm{M}_1}, \widetilde{\m}_{{s_1s_2s_1s_2s_1}\cdot\lambda})\,. \nonumber
 \end{eqnarray}
 Therefore,
 \begin{eqnarray}
 H^{0}(\partial_1, \tm_{\lambda}) & = & H^{0}(\rS_\G^{\rm{M}_1}, \widetilde{\m}_{\lambda})\nonumber\\
 H^{1}(\partial_1, \tm_{\lambda}) & = & H^{1}(\rS_\G^{\rm{M}_1}, \widetilde{\m}_{\lambda}) \oplus H^{0}(\rS_\G^{\rm{M}_1}, \widetilde{\m}_{{s_1\cdot\lambda}}) \nonumber\\
H^{2}(\partial_1, \tm_{\lambda}) & = &  H^{1}(\rS_\G^{\rm{M}_1}, \widetilde{\m}_{{s_1\cdot\lambda}}) \oplus H^{0}(\rS_\G^{\rm{M}_1}, \widetilde{\m}_{{s_1 s_2\cdot\lambda}})\nonumber\\
H^{3}(\partial_1, \tm_{\lambda}) & = &  H^{1}(\rS_\G^{\rm{M}_1}, \widetilde{\m}_{{s_1 s_2\cdot\lambda}}) \oplus H^{0}(\rS_\G^{\rm{M}_1}, \widetilde{\m}_{{s_1 s_2 s_1\cdot\lambda}})\nonumber\\
H^{4}(\partial_1, \tm_{\lambda}) & = &  H^{1}(\rS_\G^{\rm{M}_1}, \widetilde{\m}_{{s_1 s_2 s_1\cdot\lambda}}) \oplus H^{0}(\rS_\G^{\rm{M}_1}, \widetilde{\m}_{{s_1 s_2 s_1 s_2 \cdot\lambda}})\nonumber\\
H^{5}(\partial_1, \tm_{\lambda}) & = &  H^{1}(\rS_\G^{\rm{M}_1}, \widetilde{\m}_{{s_1 s_2 s_1 s_2 \cdot\lambda}}) \oplus H^{0}(\rS_\G^{\rm{M}_1}, \widetilde{\m}_{{s_1 s_2 s_1 s_2 s_1\cdot\lambda}})\nonumber\\
H^{6}(\partial_1, \tm_{\lambda}) & = &  H^{1}(\rS_\G^{\rm{M}_1}, \widetilde{\m}_{{s_1 s_2s_1s_2s_1  \cdot\lambda}}) \nonumber
\end{eqnarray}
and for every $q \geq 7$, the cohomology groups $H^{q}(\partial_1, \tm_{\lambda})=0$.

\subsubsection{Cohomology of $\partial_2$}
In this case, the Levi $\mr{M}_2$ is isomorphic to $\mr{GL}_2$ as well and therefore $H^{q}(\rS_\G^{\mr{M}_2}, \widetilde{\m}_{w\cdot\lambda})=0$ for every $q\geq 2$. The Weyl group $W^{P_2}=\{ e, s_2, s_2s_1 , s_2 s_1 s_2, s_2 s_1 s_2 s_1, s_2 s_1 s_2 s_1 s_2 \}$ where the length of elements are respectively $0,1,2, 3,4,5$. Thus,
\begin{eqnarray}
 H^{q}(\partial_2, \tm_{\lambda}) &= &\bigoplus_{w\in W^{\rP_{2}}} H^{q-\ell(w)}(\rS_\G^{\mr{M}_{2}}, \widetilde{\m}_{w\cdot\lambda})\\
 &=& H^{q}(\rS^{\mr{M}_{2}}, \widetilde{\m}_\lambda) \oplus H^{q-1}(\rS_\Gamma^{\rm{M}_2}, \widetilde{\m}_{{s_2\cdot\lambda}})  \oplus \, H^{q-2}(\rS_\G^{\mr{M}_{2}}, \widetilde{\m}_{{s_2 s_1\cdot\lambda}})\,.\nonumber \\
 &\oplus & \, H^{q-3}(\rS_\G^{\rm{M}_2}, \widetilde{\m}_{{s_2s_1s_2}\cdot\lambda})\,  \oplus \, H^{q-4}(\rS_\G^{\rm{M}_2}, \widetilde{\m}_{{s_2s_1s_2s_1}\cdot\lambda})\, \nonumber \\
 &\oplus &  \, H^{q-5}(\rS_\G^{\rm{M}_2}, \widetilde{\m}_{{s_2s_1s_2s_1s_2}\cdot\lambda})\,. \nonumber
 \end{eqnarray}
 Therefore,
 \begin{eqnarray}
 H^{0}(\partial_2, \tm_{\lambda}) & = & H^{0}(\rS_\Gamma^{\rm{M}_2}, \widetilde{\m}_{\lambda})\nonumber\\
 H^{1}(\partial_2, \tm_{\lambda}) & = & H^{1}(\rS_\Gamma^{\rm{M}_2}, \widetilde{\m}_{\lambda}) \oplus H^{0}(\rS_\Gamma^{\rm{M}_2}, \widetilde{\m}_{{s_2\cdot\lambda}}) \nonumber\\
H^{2}(\partial_1, \tm_{\lambda}) & = &  H^{1}(\rS_\Gamma^{\rm{M}_2}, \widetilde{\m}_{{s_2\cdot\lambda}}) \oplus H^{0}(\rS_\Gamma^{\rm{M}_2}, \widetilde{\m}_{{s_2 s_1\cdot\lambda}})\nonumber\\
H^{3}(\partial_1, \tm_{\lambda}) & = &  H^{1}(\rS_\Gamma^{\rm{M}_2}, \widetilde{\m}_{{s_2 s_1\cdot\lambda}}) \oplus H^{0}(\rS_\Gamma^{\rm{M}_2}, \widetilde{\m}_{{s_2 s_1 s_2 \cdot\lambda}})\nonumber\\
H^{4}(\partial_1, \tm_{\lambda}) & = &  H^{1}(\rS_\Gamma^{\rm{M}_2}, \widetilde{\m}_{{s_2 s_1 s_2 \cdot\lambda}}) \oplus H^{0}(\rS_\Gamma^{\rm{M}_2}, \widetilde{\m}_{{s_2 s_1 s_2 s_1  \cdot\lambda}})\nonumber\\
H^{5}(\partial_1, \tm_{\lambda}) & = &  H^{1}(\rS_\Gamma^{\rm{M}_2}, \widetilde{\m}_{{s_2 s_1 s_2 s_1  \cdot\lambda}}) \oplus H^{0}(\rS_\Gamma^{\rm{M}_2}, \widetilde{\m}_{{s_2 s_1 s_2 s_1 s_2 \cdot\lambda}})\nonumber\\
H^{6}(\partial_1, \tm_{\lambda}) & = &  H^{1}(\rS_\Gamma^{\rm{M}_2}, \widetilde{\m}_{{s_2 s_1 s_2s_1s_2  \cdot\lambda}}) \nonumber
\end{eqnarray}
and for every $q \geq 7$, the cohomology groups $H^{q}(\partial_1, \tm_{\lambda})=0$.


\section{Boundary cohomology}\label{boundary}
In this section, we discuss the cohomology of the boundary by giving complete description of the spectral sequence. The covering of the boundary of the Borel-Serre compactification defines a spectral sequence in cohomology.
\[
	E_1^{p, q} = \bigoplus_{prk(P)=(p+1)} H^q(\partial_\rP, \widetilde{\mathcal{M}}_\lambda) \Rightarrow H^{p+q}(\partial \rS_\Gamma, \widetilde{\mathcal{M}}_\lambda)
\]
and the nonzero terms of $E_1^{p, q}$ are for
\begin{equation}\label{eq:pq}
(p, q) \in \left\{ (i, n) | 0\leq i\leq 1, 0 \leq n \leq 6 \right\} \,.
\end{equation}

More precisely,
\begin{eqnarray}\label{eq:e12}
 E_{ 1}^{0,q} & = & \bigoplus_{i=1}^{2} \mr{H}^{q}(\partial_{i}, \tm_{\lambda}) \nonumber\\
 &= &  \bigoplus_{i=1}^{2}\bigg[ \bigoplus_{w\in \W^{\rP_{i}}} \mr{H}^{q-\ell(w)}(\mr{S}_\Gamma^{\mr{M}_{i}}, \tm_{w\cdot\lambda})\bigg]  \,,\\
 E_{ 1}^{1,q} & = & \mr{H}^{q}(\partial_{0}, \tm_{\lambda}) \nonumber\\
& = & \bigoplus_{w \in \W^{\rP_{0}}: \ell(w)=q} \mr{H}^{0}(\mr{S}_\Gamma^{\mr{M}_{0}}, \widetilde{\m}_{w\cdot\lambda})\,.\nonumber
 \end{eqnarray}

 Since $\mr{G}_{2}$  is of rank two, the spectral sequence has only two columns namely $E_{1}^{0,q}, E_{1}^{1,q}$ and to study the boundary cohomology, the task reduces to analyze the following morphisms
\begin{equation}\label{eq:spec3}
E_{1}^{0,q} \xrightarrow{d_{1}^{0,q}} E_{1}^{1,q}
\end{equation}
where  $d_{1}^{0,q}$ is the differential map and the higher differentials vanish. One has
 \begin{eqnarray*}
 E_2^{0, q} := Ker(d_{1}^{0,q}) \quad \mr{and} \quad E_2^{1, q} := Coker(d_{1}^{0,q})\,.
 \end{eqnarray*}

In addition, due to be in rank $2$ situation, the spectral sequence degenerates in degree $2$. Therefore, we can use the fact that
\begin{equation}\label{eq:hks}
	H^k(\partial \rS_\Gamma, \tm_\lambda) = \bigoplus_{p+q=k} E_2^{p, q}\,.
\end{equation}

In other words, let us now consider the short exact sequence
\begin{eqnarray}\label{eq:ses}
0 \longrightarrow E_2^{1, q-1} \longrightarrow H^q(\partial \rS_\Gamma, \tm_\lambda) \longrightarrow E_2^{0, q} \longrightarrow 0 \quad.
\end{eqnarray}

\subsection{Case 1\,: $m_1 = 0$ and $m_2 = 0$ (trivial coefficient system)}

Following Lemma~\ref{parity0} and Lemma~\ref{parity1} from Section~\ref{parity}, we get
\begin{eqnarray*}
\overline{\W}^{0} = \{w_1, w_6, w_7, w_{12} \} \,, \quad \overline{\W}^{1}=\{w_1, w_5, w_7 \} \,\quad \mr{and} \quad \overline{\W}^{2} = \{ w_1 , w_4, w_6 \} \,.
\end{eqnarray*}

By using~\eqr{e12} we record the values of $E_{1}^{0,q}$ and $E_{1}^{1,q}$ for the distinct values of $q$ below. Note that following~\eqr{pq} we know that for $q\geq 7$, $E_{1}^{i,q}=0$ for $i=1,2$.

\begin{equation}\label{eq:e110q}
E_{1}^{0,q} =\left\{\begin{array}{cccc}
&H^{0}(\rS_\Gamma^{\rm{M}_1}, \tm_{e\cdot\lambda})\oplus H^{0}(\rS_\Gamma^{\rm{M}_2}, \tm_{e\cdot\lambda}) \,, & q=0 \\
&\\
& H^{1}(\rS_\Gamma^{\rm{M}_1}, \tm_{w_5\cdot\lambda}) \oplus  H^{1}(\rS_\Gamma^{\rm{M}_2}, \tm_{w_4 \cdot\lambda})  \,, & q=3 \\
&\\
& H^{1}(\rS_\Gamma^{\rm{M}_1}, \tm_{w_7\cdot\lambda})\oplus H^{1}(\rS_\Gamma^{\rm{M}_2}, \tm_{w_6\cdot\lambda}) \,, & q=4 \\
&\\
&0  \,, & \mr{otherwise}\\
\end{array}\qquad\,, \right .
\end{equation}
and
\begin{equation}\label{eq:e111q}
E_{1}^{1,q} =\left\{\begin{array}{cccc}
& H^{0}(\rS_\Gamma^{\rm{M}_0}, \tm_{e\cdot\lambda})\,,& q=0\\
&\\
&  H^{0}(\rS_\Gamma^{\rm{M}_0}, \tm_{w_6\cdot\lambda}) \oplus H^{0}(\rS_\Gamma^{\rm{M}_0}, \tm_{w_7\cdot\lambda})  \,,& q=3\\
&\\
&  H^{0}(\rS_\Gamma^{\rm{M}_0}, \tm_{w_{12}\cdot\lambda}) \,,& q=6\\
&\\
& 0  \,,& \mr{otherwise}\\
\end{array}\qquad\,. \right .
\end{equation}

We now make a thorough analysis of~\eqr{spec3} to get the complete description of the spaces $E_{2}^{0,q}$ and $E_{2}^{1,q}$ which will give us the cohomology $H^{q}(\partial \rS_\G, \tm_{\lambda})$. We begin with $q=0$.

\subsubsection{{At the level $q=0$}} Observe that the short exact sequence~\eqr{ses} reduces  to
\begin{eqnarray*}
0 \longrightarrow H^0(\partial \rS_\Gamma, \tm_\lambda) \longrightarrow E_2^{0, 0} \longrightarrow 0 \quad.
\end{eqnarray*}
To compute $E_2^{0, 0}$,  consider the differential $d_1^{0, 0}: E_1^{0, 0} \rightarrow E_1^{1, 0}$. Following~\eqr{e110q} and ~\eqr{e111q}, we have $d_1^{0, 0}: \Q \oplus \Q \longrightarrow \Q$ and we know 
that the differential $d_1^{0, 0}$ is surjective (see~\cite{Harder87}).
Therefore
\begin{align}\label{eq:e210}
 E_2^{0, 0} := Ker(d_1^{0, 0}) = \mathbb{Q} \quad \mr{and}  \quad E_2^{1, 0}:=Coker(d_1^{0, 0}) = 0  .
\end{align}

Hence, we get $$ H^0(\partial \rS_\Gamma, \tm_\lambda) =\Q\,.$$

\subsubsection{{At the level $q=1$}} Following~\eqr{e210}, in this case, our short exact sequence~\eqr{ses} reduces to
\begin{eqnarray*}
0 \longrightarrow H^1(\partial \rS_\Gamma, \tm_\lambda) \longrightarrow E_2^{0, 1} \longrightarrow 0 \quad\,,
\end{eqnarray*}
and we need  to compute $E_2^{0, 1}$. Consider the differential $d_{1}^{0,1}: E_1^{0, 1} \longrightarrow E_1^{1, 1}$ and following~\eqr{e110q} and~\eqr{e111q}, we observe that $d_{1}^{0,1}$ is map between zero  spaces. Therefore, we obtain
\begin{eqnarray*}\label{eq:e201}
E_2^{0, 1} = 0  \qquad \mr{and} \qquad E_2^{1, 1} = 0\,.
\end{eqnarray*}
As a result, we get $$ H^1(\partial \rS_\Gamma, \tm_\lambda) =0\,.$$

\subsubsection{{At the level $q=2$}} Following the similar process as in level $q=1$, we get
\begin{eqnarray}\label{eq:e202}
E_2^{0, 2} = 0\qquad \mr{and} \qquad E_2^{1, 2} = 0\,.
\end{eqnarray}
This results into $$ H^2(\partial \rS_\Gamma, \tm_\lambda) =0\,.$$

\subsubsection{{At the level $q=3$}} Following~\eqr{e202}, in this case, the short exact sequence~\eqr{ses} reduces to
\begin{eqnarray*}
0 \longrightarrow H^3(\partial \rS_\Gamma, \tm_\lambda) \longrightarrow E_2^{0, 3} \longrightarrow 0 \quad\,,
\end{eqnarray*}
and we need  to compute $E_2^{0, 3}$. In view of \eqref{parity3}, consider the differential $d_{1}^{0,3}: E_1^{0, 3} \longrightarrow E_1^{1, 3}$ and following~\eqr{e110q} and~\eqr{e111q}, we have
 \begin{eqnarray*}
 E_2^{0, 3} &=& H^{1}_{!}(\rS_\Gamma^{\rm{M}_1}, \tm_{w_5\cdot\lambda}) \oplus  H^{1}_{!}(\rS_\Gamma^{\rm{M}_2}, \tm_{w_4 \cdot\lambda}),  \mr{and}\\ Im(d_{1}^{0,3}) &=& H^{1}_{Eis}(\rS_\Gamma^{\rm{M}_1}, \tm_{w_5\cdot\lambda}) \oplus H^{1}_{Eis}(\rS_\Gamma^{\rm{M}_1}, \tm_{w_5\cdot\lambda}) \cong \Q \oplus \Q\,.
 \end{eqnarray*}
Therefore, \begin{eqnarray}\label{eq:e213}
E_2^{0, 3} = 0\qquad \mr{and} \qquad E_2^{1, 3} = 0\,.
\end{eqnarray}
This gives us $$ H^3(\partial \rS_\Gamma, \tm_\lambda) =0\,.$$

\subsubsection{{At the level $q=4$}} Following~\eqr{e213}, in this case, the short exact sequence~\eqr{ses} reduces to
\begin{eqnarray*}
0 \longrightarrow H^4(\partial \rS_\Gamma, \tm_\lambda) \longrightarrow E_2^{0, 4} \longrightarrow 0 \quad\,,
\end{eqnarray*}
and we need  to compute $E_2^{0, 4}$. Consider the differential $d_{1}^{0,4}: E_1^{0, 4} \longrightarrow E_1^{1, 4}$ and following~\eqr{e110q} and~\eqr{e111q}, we have $ E_2^{0, 4} = H^{1}_{!}(\rS_\Gamma^{\rm{M}_1}, \tm_{w_7\cdot\lambda})\oplus H^{1}_{!}(\rS_\Gamma^{\rm{M}_2}, \tm_{w_6\cdot\lambda}) $. Since $ Im(d_{1}^{0,4}) = H^{1}_{Eis}(\rS_\Gamma^{\rm{M}_1}, \tm_{w_7\cdot\lambda})\oplus H^{1}_{Eis}(\rS_\Gamma^{\rm{M}_2}, \tm_{w_6\cdot\lambda}) = \{ 0 \}$ by \eqref{parity2}. Therefore,
\begin{eqnarray}\label{eq:e214}
E_2^{0, 4} = 0 \qquad \mr{and} \qquad E_2^{1, 4} = 0\,.
\end{eqnarray}
and we get $$ H^4(\partial \rS_\Gamma, \tm_\lambda) = 0 \,.$$

\subsubsection{{At the level $q=5$}} Following~\eqr{e214}, in this case, the short exact sequence~\eqr{ses} reduces to
\begin{eqnarray*}
0 \longrightarrow H^4(\partial \rS_\Gamma, \tm_\lambda) \longrightarrow E_2^{0, 5} \longrightarrow 0 \quad\,,
\end{eqnarray*}
and we need  to compute $E_2^{0, 5}$. Consider the differential $d_{1}^{0,5}: E_1^{0, 5} \longrightarrow E_1^{1, 5}$ and following~\eqr{e110q} and~\eqr{e111q}, we have $d_{1}^{0,5}: 0 \longrightarrow 0$. Therefore,
\begin{eqnarray}\label{eq:e215}
E_2^{0, 5} = 0 \qquad \mr{and} \qquad E_2^{1, 5} = 0\,.
\end{eqnarray}
and we get $$ H^5(\partial \rS_\Gamma, \tm_\lambda) = 0\,.$$

\subsubsection{{At the level $q=6$}} Following~\eqr{e215}, in this case, the short exact sequence~\eqr{ses} reduces to
\begin{eqnarray*}
0 \longrightarrow H^6(\partial \rS_\Gamma, \tm_\lambda) \longrightarrow E_2^{0, 6} \longrightarrow 0 \quad\,,
\end{eqnarray*}
and we need  to compute $E_2^{0, 6}$. Consider the differential $d_{1}^{0,6}: E_1^{0, 6} \longrightarrow E_1^{1, 6}$ and following~\eqr{e110q} and~\eqr{e111q}, we have $d_{1}^{0,6}: 0 \longrightarrow \Q$. Therefore,
\begin{eqnarray}\label{eq:e216}
E_2^{0, 6} = 0 \qquad \mr{and} \qquad E_2^{1, 6} = \Q\,.
\end{eqnarray}
and we get $$ H^6(\partial \rS_\Gamma, \tm_\lambda) = 0\,.$$

\subsubsection{{At the level $q=7$}} Following~\eqr{e216}, in this case, the short exact sequence~\eqr{ses} reduces to
\begin{eqnarray*}
0 \longrightarrow \Q \longrightarrow H^7(\partial \rS_\Gamma, \tm_\lambda) \longrightarrow E_2^{0, 7} \longrightarrow 0 \quad\,,
\end{eqnarray*}
and we need  to compute $E_2^{0, 7}$. Consider the differential $d_{1}^{0,7}: E_1^{0, 7} \longrightarrow E_1^{1, 7}$ and following~\eqr{e110q} and~\eqr{e111q}, we have $d_{1}^{0,7}: 0 \longrightarrow 0$. Therefore,
\begin{eqnarray*}\label{eq:e217}
E_2^{0, 7} = 0 \qquad \mr{and} \qquad E_2^{1, 7} = 0\,.
\end{eqnarray*}
and we get $$ H^7(\partial \rS_\Gamma, \tm_\lambda) = \Q\,.$$

Hence, we can summarize the above discussion as follows :
\begin{align*}
H^q(\partial \rS_\Gamma, \tm_\lambda) =\left\{\begin{array}{cccc}
&\mathbb{Q} \,, &  q=0,7 \\
&\\
&0  \,, & \mr{otherwise}\\
\end{array}\qquad\,. \right .
\end{align*}

\subsection{Case 2\, : $m_1 = 0$, $m_2 \neq 0$, $m_2$ even}\label{case2}

Following Lemma~\ref{parity0} and Lemma~\ref{parity1} from Section~\ref{parity}, we get
\begin{eqnarray*}
\overline{\W}^{0} = \{w_1, w_6, w_7, w_{12} \} \,, \quad \overline{\W}^{1}=\{w_1, w_5, w_7 \} \,\quad \mr{and} \quad \overline{\W}^{2} = \{ w_1 , w_4, w_6, w_{10} \} \,.
\end{eqnarray*}

By using~\eqr{e12} we record the values of $E_{1}^{0,q}$ and $E_{1}^{1,q}$ for the distinct values of $q$ below. Note that following~\eqr{pq} we know that for $q\geq 7$, $E_{1}^{i,q}=0$ for $i=1,2$.

\begin{equation*}\label{eq:e210q}
E_{1}^{0,q} =\left\{\begin{array}{cccc}
&H^{0}(\rS_\Gamma^{\rm{M}_1}, \tm_{e\cdot\lambda}) \,, & q=0 \\
&\\
& H^{1}(\rS_\Gamma^{\rm{M}_2}, \tm_{\lambda})  \,, & q=1 \\
&\\
& H^{1}(\rS_\Gamma^{\rm{M}_1}, \tm_{w_5\cdot\lambda})\oplus H^{1}(\rS_\Gamma^{\rm{M}_2}, \tm_{w_4\cdot\lambda}) \,, & q=3 \\
&\\
& H^{1}(\rS_\Gamma^{\rm{M}_1}, \tm_{w_7\cdot\lambda})\oplus H^{1}(\rS_\Gamma^{\rm{M}_2}, \tm_{w_6\cdot\lambda}) \,, & q=4 \\
&\\
& H^{1}(\rS_\Gamma^{\rm{M}_2}, \tm_{w_{10}\cdot\lambda})\,, & q=6 \\
&\\
&0  \,, & \mr{otherwise}\\
\end{array}\qquad\,, \right .
\end{equation*}
and
\begin{equation*}\label{eq:e211q}
E_{1}^{1,q} =\left\{\begin{array}{cccc}
& H^{0}(\rS_\Gamma^{\rm{M}_0}, \tm_{e\cdot\lambda}) \,,& q=0\\
&\\
& H^{0}(\rS_\Gamma^{\rm{M}_0}, \tm_{w_6\cdot\lambda}) \oplus H^{0}(\rS_\Gamma^{\rm{M}_0}, \tm_{w_7\cdot\lambda})  \,,& q=3\\
&\\
&  H^{0}(\rS_\Gamma^{\rm{M}_0}, \tm_{w_{12}\cdot\lambda}) \,,& q=6\\
&\\
& 0  \,,& \mr{otherwise}\\
\end{array}\qquad\,. \right .
\end{equation*}

Having a thorough analysis of~\eqr{spec3} as in previous section, we get the complete description of the spaces $E_{2}^{0,q}$ and $E_{2}^{1,q}$ which will give us the cohomology $H^{q}(\partial \rS_\Gamma, \tm_{\lambda})$ described as follows :

\begin{equation*}
H^q(\partial \rS_\Gamma, \tm_\lambda) =\left\{\begin{array}{cccc}
& H^{1}_{!}(\rS_\Gamma^{\rm{M}_2}, \tm_{\lambda})  \cong S_{m_2 + 2}\quad \,, &  q=1 \\
&\\
& H^{1}_{!}(\rS_\Gamma^{\rm{M}_1}, \tm_{w_5\cdot\lambda})\oplus H^{1}_{!}(\rS_\Gamma^{\rm{M}_2}, \tm_{w_4\cdot\lambda})\cong  S_{3m_2 + 6} \oplus S_{2m_2 + 4}\,, & q=3 \\
&\\
& H^{1}_{!}(\rS_\Gamma^{\rm{M}_1}, \tm_{w_7\cdot\lambda})\oplus H^{1}_{!}(\rS_\Gamma^{\rm{M}_2}, \tm_{w_6\cdot\lambda})\cong S_{3m_2 + 6} \oplus S_{2m_2 + 4} \,, & q=4 \\
& \\
& H^{1}_{!}(\rS_\Gamma^{\rm{M}_2}, \tm_{w_{10}\cdot\lambda})\cong  S_{m_2 + 2} \,, & q=6 \\
&\\
& 0 \,, & \mr{otherwise}\\
\end{array}\qquad\,. \right .
\end{equation*}
Here $S_k$ is defined as in \eqref{def-cuspforms}.

We conclude the above discussion as follows :
\begin{equation*}
H^q(\partial \rS_\Gamma, \tm_\lambda) =\left\{\begin{array}{cccc}
& S_{m_2 + 2}\quad \,, & q=1, 6 \\
&\\
& S_{3m_2 + 6} \oplus S_{2m_2 + 4}\,, & q=3, 4 \\
&\\
& 0\,, & \mr{otherwise}\\
\end{array}\qquad\,. \right .
\end{equation*}

\subsection{Case 3 : $m_1 = 0$, $m_2$ odd}\label{case3}

Following Lemma~\ref{parity0} and Lemma~\ref{parity1} from Section~\ref{parity}, we get
\begin{eqnarray*}
\overline{\W}^{0} = \{w_1, w_4, w_9, w_{11} \} \,, \quad \overline{\W}^{1}=\{w_3, w_9, w_{11} \} \,\quad \mr{and} \quad \overline{\W}^{2} = \{ w_2 , w_4, w_6, w_{8} \} \,.
\end{eqnarray*}
By using~\eqr{e12} we record the values of $E_{1}^{0,q}$ and $E_{1}^{1,q}$ for the distinct values of $q$ below.
\begin{equation*}\label{eq:e310q}
E_{1}^{0,q} =\left\{\begin{array}{cccc}
& H^{1}(\rS_\Gamma^{\rm{M}_1}, \tm_{ w_3 \cdot \lambda}) \oplus  H^{1}(\rS_\Gamma^{\rm{M}_2}, \tm_{ w_2 \cdot\lambda}) \,, & q=2 \\
&\\
& H^{1}(\rS_\Gamma^{\rm{M}_2}, \tm_{w_4\cdot\lambda}) \,, & q=3 \\
&\\
& H^{1}(\rS_\Gamma^{\rm{M}_2}, \tm_{w_6\cdot\lambda}) \,, & q=4 \\
&\\
& H^{1}(\rS_\Gamma^{\rm{M}_1}, \tm_{w_{9}\cdot\lambda}) \oplus H^{0}(\rS_\Gamma^{\rm{M}_1}, \tm_{w_{11}\cdot\lambda}) \oplus H^{1}(\rS_\Gamma^{\rm{M}_2}, \tm_{w_{8}\cdot\lambda}) \,, & q=5 \\
&\\
& H^{1}(\rS_\Gamma^{\rm{M}_1}, \tm_{w_{11}\cdot\lambda}) \,, & q=6 \\
&\\
&0  \,, & \mr{otherwise}\\
\end{array}\qquad\,, \right .
\end{equation*}
and
\begin{equation*}\label{eq:e311q}
E_{1}^{1,q} =\left\{\begin{array}{cccc}
& H^{0}(\rS_\Gamma^{\rm{M}_0}, \tm_{ w_2 \cdot\lambda}) \cong \Q\,,& q=1\\
&\\
& H^{0}(\rS_\Gamma^{\rm{M}_0}, \tm_{w_4\cdot\lambda})   \cong \Q \,,& q=2\\
&\\
& H^{0}(\rS_\Gamma^{\rm{M}_0}, \tm_{w_{9}\cdot\lambda}) \cong \Q \,,& q=4\\
&\\
& H^{0}(\rS_\Gamma^{\rm{M}_0}, \tm_{w_{11}\cdot\lambda}) \cong \Q \,,& q=5\\
&\\
& 0  \,,& \mr{otherwise}\\
\end{array}\qquad\,. \right .
\end{equation*}

Having a thorough analysis of~\eqr{spec3} as in previous section, we get the complete description of the spaces $E_{2}^{0,q}$ and $E_{2}^{1,q}$ which will give us the cohomology $H^{q}(\partial \rS_\Gamma, \tm_{\lambda})$ described as follows :

\begin{equation*}
H^q(\partial \rS_\Gamma, \tm_\lambda) =\left\{\begin{array}{cccc}
& H^{1}_{!}(\rS_\Gamma^{\rm{M}_1}, \tm_{w_3 \cdot \lambda}) \oplus  H^{1}_{!}(\rS_\Gamma^{\rm{M}_2}, \tm_{w_2 \cdot \lambda}) \oplus \Q  \,, &  q=2 \\
&\\
&  H^{1}_{!}(\rS_\Gamma^{\rm{M}_2}, \tm_{w_4\cdot\lambda})  \,, & q=3 \\
&\\
& H^{1}_{!}(\rS_\Gamma^{\rm{M}_2}, \tm_{w_6\cdot\lambda})  \,, & q=4 \\
& \\
&  H^{1}_{!}(\rS_\Gamma^{\rm{M}_1}, \tm_{w_9 \cdot \lambda})  \oplus H^{1}_{!}(\rS_\Gamma^{\rm{M}_2}, \tm_{w_8\cdot\lambda}) \oplus \Q \,, & q=5 \\
&\\
& 0  \,, & \mr{otherwise}\\
\end{array}\qquad\,. \right .
\end{equation*}

We conclude the above discussion as follows :
\begin{equation*}
H^q(\partial \rS_\Gamma, \tm_\lambda) =\left\{\begin{array}{cccc}
& S_{3m_2 + 5} \oplus S_{m_2 + 3} \oplus \Q \,, & q=2, 5 \\
&\\
& S_{2m_2 + 4} \,, & q=3, 4 \\
&\\
& 0  \,, & \mr{otherwise}\\
\end{array}\qquad\,. \right .
\end{equation*}

\subsection{Case 4 : $m_1 \neq 0$ even  and $m_2 = 0$}\label{case4}

Following Lemma~\ref{parity0} and Lemma~\ref{parity1} from Section~\ref{parity}, we get
\begin{eqnarray*}
\overline{\W}^{0} = \{w_1, w_6, w_7, w_{12} \} \,, \quad \overline{\W}^{1}=\{w_1, w_5, w_7, w_{11} \} \,\quad \mr{and} \quad \overline{\W}^{2} = \{ w_1 , w_4, w_6 \} \,.
\end{eqnarray*}
By using~\eqr{e12} we record the values of $E_{1}^{0,q}$ and $E_{1}^{1,q}$ for the distinct values of $q$ below.
\begin{equation*}\label{eq:e410q}
E_{1}^{0,q} =\left\{\begin{array}{cccc}
& H^{0}(\rS_\Gamma^{\rm{M}_2}, \tm_{\lambda})  \,, & q=0 \\
&\\
& H^{1}(\rS_\Gamma^{\rm{M}_1}, \tm_{\lambda}) \oplus  H^{1}(\rS_\Gamma^{\rm{M}_2}, \tm_{\lambda}) \,, & q=1 \\
&\\
& H^{1}(\rS_\Gamma^{\rm{M}_1}, \tm_{w_5\cdot\lambda})  \oplus H^{1}(\rS_\Gamma^{\rm{M}_2}, \tm_{w_4 \cdot \lambda})\,, & q=3 \\
&\\
& H^{1}(\rS_\Gamma^{\rm{M}_1}, \tm_{w_7\cdot\lambda}) \oplus H^{1}(\rS_\Gamma^{\rm{M}_2}, \tm_{w_6 \cdot \lambda})\,, & q=4 \\
&\\
& H^{1}(\rS_\Gamma^{\rm{M}_1}, \tm_{w_{11}\cdot\lambda}) \,, & q=6 \\
&\\
&0  \,, & \mr{otherwise}\\
\end{array}\qquad\,, \right .
\end{equation*}
and
\begin{equation*}\label{eq:e411q}
E_{1}^{1,q} =\left\{\begin{array}{cccc}
& H^{0}(\rS_\Gamma^{\rm{M}_0}, \tm_{\lambda}) \cong \Q\,,& q=0\\
&\\
& H^{0}(\rS_\Gamma^{\rm{M}_0}, \tm_{w_6\cdot\lambda}) \oplus  H^{0}(\rS_\Gamma^{\rm{M}_0}, \tm_{w_7\cdot\lambda})  \cong \Q \,,& q=3\\
&\\
& H^{0}(\rS_\Gamma^{\rm{M}_0}, \tm_{w_{12}\cdot \lambda}) \cong \Q\,,& q=6\\
&\\
& 0  \,,& \mr{otherwise}\\
\end{array}\qquad\,. \right .
\end{equation*}

Having a thorough analysis of~\eqr{spec3} as in previous section, we get the complete description of the spaces $E_{2}^{0,q}$ and $E_{2}^{1,q}$ which will give us the cohomology $H^{q}(\partial \rS_\Gamma, \tm_{\lambda})$ described as follows :

\begin{equation*}
H^q(\partial \rS_\Gamma, \tm_\lambda) =\left\{\begin{array}{cccc}
&  H^{1}_{!}(\rS_\Gamma^{\rm{M}_1}, \tm_{\lambda}) \,, & q=1 \\
&\\
& H^{1}_{!}(\rS_\Gamma^{\rm{M}_1}, \tm_{w_{5} \cdot \lambda}) \oplus H^{1}_{!}(\rS_\Gamma^{\rm{M}_2}, \tm_{w_4\cdot\lambda})  \,, & q=3 \\
& \\
& H^{1}_{!}(\rS_\Gamma^{\rm{M}_1}, \tm_{w_{7} \cdot \lambda}) \oplus H^{1}_{!}(\rS_\Gamma^{\rm{M}_2}, \tm_{w_6\cdot\lambda})  \,, & q=4 \\
& \\
&  H^{1}_{!}(\rS_\Gamma^{\rm{M}_1}, \tm_{w_{11} \cdot \lambda}) \,, & q=6 \\
&\\
& 0  \,, & \mr{otherwise}\\
\end{array}\qquad\,. \right .
\end{equation*}

We conclude the above discussion as follows :
\begin{equation*}
H^q(\partial \rS_\Gamma, \tm_\lambda) =\left\{\begin{array}{cccc}
&  S_{m_1 + 2}  \,, & q=1, 6 \\
& \\
&  S_{2m_1 + 6} \oplus S_{m_1 + 4}\,, & q=3, 4 \\
&\\
& 0  \,, & \mr{otherwise}\\
\end{array}\qquad\,. \right .
\end{equation*}

\subsection{Case 5 : $m_1 (\neq 0)$ even, $m_2 (\neq 0)$ even}\label{case5}

Following Lemma~\ref{parity0} and Lemma~\ref{parity1} from Section~\ref{parity}, we get
\begin{eqnarray*}
\overline{\W}^{0} = \{w_1, w_6, w_7, w_{12} \} \,, \quad \overline{\W}^{1}=\{w_1, w_5, w_7, w_{11} \} \,\quad \mr{and} \quad \overline{\W}^{2} = \{ w_1 , w_4, w_6, w_{10} \} \,.
\end{eqnarray*}

By using~\eqr{e12} we record the values of $E_{1}^{0,q}$ and $E_{1}^{1,q}$ for the distinct values of $q$ below.

\begin{equation*}\label{eq:e510q}
E_{1}^{0,q} =\left\{\begin{array}{cccc}
& H^{1}(\rS_\Gamma^{\rm{M}_1}, \tm_{\lambda}) \oplus  H^{1}(\rS_\Gamma^{\rm{M}_2}, \tm_{\lambda}) \,, & q=1 \\
&\\
& H^{1}(\rS_\Gamma^{\rm{M}_1}, \tm_{w_5\cdot\lambda})  \oplus H^{1}(\rS_\Gamma^{\rm{M}_2}, \tm_{w_4 \cdot \lambda})\,, & q=3 \\
&\\
& H^{1}(\rS_\Gamma^{\rm{M}_1}, \tm_{w_7\cdot\lambda}) \oplus H^{1}(\rS_\Gamma^{\rm{M}_2}, \tm_{w_6 \cdot \lambda})\,, & q=4 \\
&\\
& H^{1}(\rS_\Gamma^{\rm{M}_1}, \tm_{w_{11}\cdot\lambda}) \oplus H^{1}(\rS_\Gamma^{\rm{M}_2}, \tm_{w_{10} \cdot \lambda})\,, & q=6 \\
&\\
&0  \,, & \mr{otherwise}\\
\end{array}\qquad\,, \right .
\end{equation*}
and
\begin{equation*}\label{eq:e511q}
E_{1}^{1,q} =\left\{\begin{array}{cccc}
& H^{0}(\rS_\Gamma^{\rm{M}_0}, \tm_{\lambda}) \cong \Q\,,& q=0\\
&\\
& H^{0}(\rS_\Gamma^{\rm{M}_0}, \tm_{w_6\cdot\lambda}) \oplus  H^{0}(\rS_\Gamma^{\rm{M}_0}, \tm_{w_7\cdot\lambda})  \cong \Q \,,& q=3\\
&\\
& H^{0}(\rS_\Gamma^{\rm{M}_0}, \tm_{w_{12}\cdot \lambda}) \cong \Q\,,& q=6\\
&\\
& 0  \,,& \mr{otherwise}\\
\end{array}\qquad\,. \right .
\end{equation*}

Having a thorough analysis of~\eqr{spec3} as in previous section, we get the complete description of the spaces $E_{2}^{0,q}$ and $E_{2}^{1,q}$ which will give us the cohomology $H^{q}(\partial \rS_\Gamma, \tm_{\lambda})$ described as follows :

\begin{equation*}
H^q(\partial \rS_\Gamma, \tm_\lambda) =\left\{\begin{array}{cccc}
&  H^{1}_{!}(\rS_\Gamma^{\rm{M}_1}, \tm_{\lambda}) \oplus  H^{1}_{!}(\rS_\Gamma^{\rm{M}_1}, \tm_{\lambda}) \oplus \Q\,, & q=1 \\
&\\
& H^{1}_{!}(\rS_\Gamma^{\rm{M}_1}, \tm_{w_{5} \cdot \lambda}) \oplus H^{1}_{!}(\rS_\Gamma^{\rm{M}_2}, \tm_{w_4\cdot\lambda})  \,, & q=3 \\
& \\
& H^{1}_{!}(\rS_\Gamma^{\rm{M}_1}, \tm_{w_{7} \cdot \lambda}) \oplus H^{1}_{!}(\rS_\Gamma^{\rm{M}_2}, \tm_{w_6\cdot\lambda})  \,, & q=4 \\
& \\
&  H^{1}_{!}(\rS_\Gamma^{\rm{M}_1}, \tm_{w_{11} \cdot \lambda}) \oplus  H^{1}_{!}(\rS_\Gamma^{\rm{M}_2}, \tm_{w_{10} \cdot \lambda})\oplus \Q \,, & q=6 \\
&\\
& 0  \,, & \mr{otherwise}\\
\end{array}\qquad\,. \right .
\end{equation*}

We conclude the above discussion as follows :
\begin{equation*}
H^q(\partial \rS_\Gamma, \tm_\lambda) =\left\{\begin{array}{cccc}
&  S_{m_1 + 2} \oplus S_{m_2 + 2} \oplus \Q \,, & q=1, 6 \\
& \\
&  S_{2m_1 + 3m_2+ 6} \oplus S_{m_1 + 2m_2+4}\,, & q=3, 4 \\
&\\
& 0  \,,& \mr{otherwise}\\
\end{array}\qquad\,. \right .
\end{equation*}

\subsection{Case 6 : $m_1 (\neq 0)$ even, $m_2$ odd}\label{case6}

Following Lemma~\ref{parity0} and Lemma~\ref{parity1} from Section~\ref{parity}, we get
\begin{eqnarray*}
\overline{\W}^{0} = \{w_2, w_4, w_9, w_{11} \} \,, \quad \overline{\W}^{1}=\{w_1, w_3, w_9, w_{11} \} \,\quad \mr{and} \quad \overline{\W}^{2} = \{ w_2 , w_4, w_6, w_{8} \} \,.
\end{eqnarray*}
By using~\eqr{e12} we record the values of $E_{1}^{0,q}$ and $E_{1}^{1,q}$ for the distinct values of $q$ below.
\begin{equation*}\label{eq:e610q}
E_{1}^{0,q} =\left\{\begin{array}{cccc}
& H^{1}(\rS_\Gamma^{\rm{M}_1}, \tm_{\lambda}) \,, & q=1 \\
&\\
& H^{1}(\rS_\Gamma^{\rm{M}_1}, \tm_{w_3 \cdot \lambda}) \oplus H^{1}(\rS_\Gamma^{\rm{M}_2}, \tm_{w_2 \cdot \lambda})\,, & q=2 \\
&\\
&  H^{1}(\rS_\Gamma^{\rm{M}_2}, \tm_{w_4 \cdot \lambda})\,, & q=3 \\
&\\
&  H^{1}(\rS_\Gamma^{\rm{M}_2}, \tm_{w_6 \cdot \lambda})\,, & q=4 \\
&\\
& H^{1}(\rS_\Gamma^{\rm{M}_1}, \tm_{w_9 \cdot \lambda}) \oplus H^{1}(\rS_\Gamma^{\rm{M}_2}, \tm_{w_8 \cdot \lambda})\,, & q=5 \\
&\\
& H^{1}(\rS_\Gamma^{\rm{M}_1}, \tm_{w_{11}\cdot\lambda}) \,, & q=6 \\
&\\
&0  \,, & \mr{otherwise}\\
\end{array}\qquad\,, \right .
\end{equation*}
and
\begin{equation*}\label{eq:e611q}
E_{1}^{1,q} =\left\{\begin{array}{cccc}
& H^{0}(\rS_\Gamma^{\rm{M}_0}, \tm_{w_2 \cdot \lambda}) \,,& q=1\\
&\\
& H^{0}(\rS_\Gamma^{\rm{M}_0}, \tm_{w_4 \cdot \lambda}) \,,& q=2\\
&\\
& H^{0}(\rS_\Gamma^{\rm{M}_0}, \tm_{w_9\cdot\lambda})  \,,& q=4\\
&\\
& H^{0}(\rS_\Gamma^{\rm{M}_0}, \tm_{w_{11}\cdot \lambda}) \cong \Q\,,& q=5\\
&\\
& 0  \,,& \mr{otherwise}\\
\end{array}\qquad\,. \right .
\end{equation*}

Having a thorough analysis of~\eqr{spec3} as in previous section, we get the complete description of the spaces $E_{2}^{0,q}$ and $E_{2}^{1,q}$ which will give us the cohomology $H^{q}(\partial \rS_\Gamma, \tm_{\lambda})$ described as follows :
\begin{equation*}
H^q(\partial \rS_\Gamma, \tm_\lambda) =\left\{\begin{array}{cccc}
&  H^{1}_{!}(\rS_\Gamma^{\rm{M}_1}, \tm_{\lambda}) \,, & q=1 \\
&\\
&  H^{1}_{!}(\rS_\Gamma^{\rm{M}_1}, \tm_{w_3 \cdot \lambda}) \oplus  H^{1}_{!}(\rS_\Gamma^{\rm{M}_2}, \tm_{w_2 \cdot \lambda})\,, & q=2\\
&\\
& H^{1}_{!}(\rS_\Gamma^{\rm{M}_2}, \tm_{w_4\cdot\lambda})  \,, & q=3 \\
& \\
&  H^{1}_{!}(\rS_\Gamma^{\rm{M}_2}, \tm_{w_6\cdot\lambda})  \,, & q=4 \\
& \\
&  H^{1}_{!}(\rS_\Gamma^{\rm{M}_1}, \tm_{w_9 \cdot \lambda}) \oplus  H^{1}_{!}(\rS_\Gamma^{\rm{M}_2}, \tm_{w_8 \cdot \lambda})\,, & q=5\\
&\\
&  H^{1}_{!}(\rS_\Gamma^{\rm{M}_1}, \tm_{w_{11} \cdot \lambda}) \,, & q=6 \\
&\\
& 0  \,, & \mr{otherwise}\\
\end{array}\qquad\,. \right .
\end{equation*}

We conclude the above discussion as follows :
\begin{equation*}
H^q(\partial \rS_\Gamma, \tm_\lambda) =\left\{\begin{array}{cccc}
&  S_{m_1 + 2} \,, & q=1, 6 \\
& \\
&  S_{m_1 + 3m_2+ 5} \oplus S_{m_1 + m_2+3}\,, & q=2, 5 \\
&\\
&  S_{m_1 + 2m_2+ 4} \,, & q=3, 4 \\
&\\
& 0  \,, & \mr{otherwise}\\
\end{array}\qquad\,. \right .
\end{equation*}

\subsection{Case 7:  $m_1$ odd, $m_2 =0$}

Following Lemma~\ref{parity0} and Lemma~\ref{parity1} from Section~\ref{parity}, we get
\begin{eqnarray*}
\overline{\W}^{0} = \{w_3, w_5, w_8, w_{10} \} \,, \quad \overline{\W}^{2}=\{w_2, w_8, w_{10} \} \,\quad \mr{and} \quad \overline{\W}^{1} = \{ w_3 , w_5, w_7, w_{9} \} \,.
\end{eqnarray*}

Note that this is dual to case 3. By using~\eqr{e12} we record the values of $E_{1}^{0,q}$ and $E_{1}^{1,q}$ for the distinct values of $q$ below.
\begin{equation*}\label{eq:e710q}
E_{1}^{0,q} =\left\{\begin{array}{cccc}
& H^{1}(\rS_\Gamma^{\rm{M}_1}, \tm_{w_3 \cdot \lambda}) \oplus  H^{1}(\rS_\Gamma^{\rm{M}_2}, \tm_{w_2 \cdot\lambda}) \,, & q=2 \\
&\\
& H^{1}(\rS_\Gamma^{\rm{M}_1}, \tm_{w_5\cdot\lambda}) \,, & q=3 \\
&\\
& H^{1}(\rS_\Gamma^{\rm{M}_1}, \tm_{w_7\cdot\lambda}) \,, & q=5 \\
&\\
& H^{1}(\rS_\Gamma^{\rm{M}_1}, \tm_{w_{9}\cdot\lambda}) \oplus H^{1}(\rS_\Gamma^{\rm{M}_2}, \tm_{w_{8}\cdot\lambda}) \oplus H^{0}(\rS_\Gamma^{\rm{M}_2}, \tm_{w_{10}\cdot\lambda}) \,, & q=5 \\
&\\
& H^{1}(\rS_\Gamma^{\rm{M}_2}, \tm_{w_{10}\cdot\lambda}) \,, & q=6 \\
&\\
&0  \,, & \mr{otherwise}\\
\end{array}\qquad\,, \right .
\end{equation*}
and
\begin{equation*}\label{eq:e711q}
E_{1}^{1,q} =\left\{\begin{array}{cccc}
& H^{0}(\rS_\Gamma^{\rm{M}_0}, \tm_{ w_3 \cdot\lambda}) \cong \Q\,,& q=1\\
&\\
& H^{0}(\rS_\Gamma^{\rm{M}_0}, \tm_{w_5\cdot\lambda})   \cong \Q \,,& q=2\\
&\\
& H^{0}(\rS_\Gamma^{\rm{M}_0}, \tm_{w_{8}\cdot\lambda}) \cong \Q \,,& q=4\\
&\\
& H^{0}(\rS_\Gamma^{\rm{M}_0}, \tm_{w_{10}\cdot\lambda}) \cong \Q \,,& q=5\\
&\\
& 0  \,,& \mr{otherwise}\\
\end{array}\qquad\,. \right .
\end{equation*}

Having a thorough analysis of~\eqr{spec3} as in previous section, we get the complete description of the spaces $E_{2}^{0,q}$ and $E_{2}^{1,q}$ which will give us the cohomology $H^{q}(\partial \rS_\Gamma, \tm_{\lambda})$ described as follows :
\begin{equation*}
H^q(\partial \rS_\Gamma, \tm_\lambda) =\left\{\begin{array}{cccc}
& H^{1}_{!}(\rS_\Gamma^{\rm{M}_1}, \tm_{w_3 \cdot \lambda}) \oplus  H^{1}_{!}(\rS_\Gamma^{\rm{M}_2}, \tm_{w_2 \cdot \lambda}) \oplus \Q  \,, &  q=2 \\
&\\
&  H^{1}_{!}(\rS_\Gamma^{\rm{M}_1}, \tm_{w_5\cdot\lambda})  \,, & q=3 \\
&\\
& H^{1}_{!}(\rS_\Gamma^{\rm{M}_1}, \tm_{w_7\cdot\lambda})  \,, & q=4 \\
& \\
&  H^{1}_{!}(\rS_\Gamma^{\rm{M}_1}, \tm_{w_9 \cdot \lambda})  \oplus H^{1}_{!}(\rS_\Gamma^{\rm{M}_2}, \tm_{w_8\cdot\lambda}) \oplus \Q \,, & q=5 \\
&\\
& 0  \,,&  \mr{otherwise}\\
\end{array}\qquad\,. \right .
\end{equation*}

We conclude the above discussion as follows :
\begin{equation*}
H^q(\partial \rS_\Gamma, \tm_\lambda) =\left\{\begin{array}{cccc}
& S_{m_1 + 5} \oplus S_{m_1 + 3} \oplus \Q   \,, & q=2, 5 \\
&\\
& S_{2m_1 + 6} \,, & q=3, 4 \\
&\\
& 0  \,, & \mr{otherwise}\\
\end{array}\qquad\,. \right .
\end{equation*}

\subsection{Case 8 :  $m_1$ odd, $m_2 (\neq 0)$ even}

Following Lemma~\ref{parity0} and Lemma~\ref{parity1} from Section~\ref{parity}, we get
\begin{eqnarray*}
\overline{\W}^{0} = \{w_3, w_5, w_8, w_{10} \} \,, \quad \overline{\W}^{1}=\{w_3, w_5, w_7, w_{9} \} \,\quad \mr{and} \quad \overline{\W}^{2} = \{ w_1 , w_2, w_8, w_{10} \} \,.
\end{eqnarray*}

Note that this is dual to case 6. By using~\eqr{e12} we record the values of $E_{1}^{0,q}$ and $E_{1}^{1,q}$ for the distinct values of $q$ below.
\begin{equation*}\label{eq:e810q}
E_{1}^{0,q} =\left\{\begin{array}{cccc}
&  H^{1}(\rS_\Gamma^{\rm{M}_2}, \tm_{\lambda}) \,, & q=1 \\
&\\
& H^{1}(\rS_\Gamma^{\rm{M}_1}, \tm_{w_3\cdot\lambda})  \oplus H^{1}(\rS_\Gamma^{\rm{M}_2}, \tm_{w_2 \cdot \lambda})\,, & q=2 \\
&\\
& H^{1}(\rS_\Gamma^{\rm{M}_1}, \tm_{w_5\cdot\lambda}) \,, & q=3 \\
&\\
& H^{1}(\rS_\Gamma^{\rm{M}_1}, \tm_{w_7\cdot\lambda}) \,, & q=4 \\
&\\
& H^{1}(\rS_\Gamma^{\rm{M}_1}, \tm_{w_{9}\cdot\lambda}) \oplus H^{1}(\rS_\Gamma^{\rm{M}_2}, \tm_{w_{8} \cdot \lambda})\,, & q=5 \\
&\\
&  H^{1}(\rS_\Gamma^{\rm{M}_2}, \tm_{w_{10} \cdot \lambda})\,, & q=6 \\
&\\
&0  \,, & \mr{otherwise}\\
\end{array}\qquad\,, \right .
\end{equation*}
and
\begin{equation*}\label{eq:e811q}
E_{1}^{1,q} =\left\{\begin{array}{cccc}
& H^{0}(\rS_\Gamma^{\rm{M}_0}, \tm_{w_3 \cdot \lambda}) \cong \Q\,,& q=1\\
&\\
& H^{0}(\rS_\Gamma^{\rm{M}_0}, \tm_{w_5\cdot \lambda}) \cong \Q\,,& q=2\\
&\\
& H^{0}(\rS_\Gamma^{\rm{M}_0}, \tm_{w_8\cdot\lambda})  \cong \Q \,,& q=4\\
&\\
& H^{0}(\rS_\Gamma^{\rm{M}_0}, \tm_{w_{10}\cdot \lambda}) \cong \Q\,,& q=5\\
&\\
& 0  \,,& \mr{otherwise}\\
\end{array}\qquad\,. \right .
\end{equation*}

Having a thorough analysis of~\eqr{spec3} as in previous section, we get the complete description of the spaces $E_{2}^{0,q}$ and $E_{2}^{1,q}$ which will give us the cohomology $H^{q}(\partial \rS_\Gamma, \tm_{\lambda})$ described as follows :
\begin{equation*}
H^q(\partial \rS_\Gamma, \tm_\lambda) =\left\{\begin{array}{cccc}
&  H^{1}_{!}(\rS_\Gamma^{\rm{M}_2}, \tm_{\lambda}) \,, & q=1 \\
&\\
& H^{1}_{!}(\rS_\Gamma^{\rm{M}_1}, \tm_{w_{3} \cdot \lambda}) \oplus H^{1}_{!}(\rS_\Gamma^{\rm{M}_2}, \tm_{w_2\cdot\lambda})  \,, & q=2 \\
& \\
&  H^{1}_{!}(\rS_\Gamma^{\rm{M}_1}, \tm_{w_5\cdot\lambda}) \,, & q=3 \\
&\\
& H^{1}_{!}(\rS_\Gamma^{\rm{M}_1}, \tm_{w_{7} \cdot \lambda})  \,, & q=4 \\
& \\
&  H^{1}_{!}(\rS_\Gamma^{\rm{M}_1}, \tm_{w_{9} \cdot \lambda}) \oplus  H^{1}_{!}(\rS_\Gamma^{\rm{M}_2}, \tm_{w_{8} \cdot \lambda}) \,, & q=5 \\
&\\
& H^{1}_{!}(\rS_\Gamma^{\rm{M}_2}, \tm_{w_{10}\cdot\lambda}) \,, & q=6 \\
&\\
& 0  \,,& \mr{otherwise}\\
\end{array}\qquad\,. \right .
\end{equation*}

We conclude the above discussion as follows :
\begin{equation*}
H^q(\partial \rS_\Gamma, \tm_\lambda) =\left\{\begin{array}{cccc}
&  S_{m_2 + 2}  \,, & q=1, 6 \\
& \\
&  S_{m_1 + 3m_2+ 5} \oplus S_{m_1 + m_2+3}\,, & q=2, 5 \\
&\\
&  S_{2m_1+3m_2 + 6}  \,, & q=3, 4 \\
& \\
& 0  \,,& \mr{otherwise}\\
\end{array}\qquad\,. \right .
\end{equation*}

\subsection{Case 9:   $m_1 $ odd, $m_2$ odd}
By checking the parity conditions for standard parabolics, following Lemmas ~\ref{parity0} and ~\ref{parity1}, we see that $\overline{\W}^{i} = \emptyset$ for $i=0,1,2$. This simply implies that
$$H^{q}(\partial \rS_\Gamma, \tm_\lambda) = 0 \,, \qquad \forall q \,.$$


 \subsection{A summary of the boundary cohomology of $G_2(\Z)$}
 We now end this section by summarizing the results obtained about the boundary cohomology above  in the form of following theorem which will be our base for further exploration on Eisenstein cohomology in Section~\ref{Eis}.

\begin{thm}\label{bdg2} The boundary cohomology of  the locally symmetric space $\mr{S}_{\G}$ of the arithmetic group $\G:=G_2(\Z)$ with respect to the coefficients in any highest  weight representation $\m_\lambda$, with $\lambda=m_1\lambda_1 + m_2 \lambda_2$, is described as follows.
\begin{enumerate}
\item Case 1 : $m_1 = 0 = m_2$.

\begin{equation*}
H^q(\partial \rS_\Gamma, \tm_\lambda) =\left\{\begin{array}{cccc}
&\mathbb{Q} \,, & q=0,7 \\
&\\
&0  \,, & \mr{otherwise}\\
\end{array}\qquad\,. \right .
\end{equation*}

\item Case 2 : $ m_1 = 0$, $m_2 (\neq 0)$ even.

\begin{equation*}
H^q(\partial \rS_\Gamma, \tm_\lambda) =\left\{\begin{array}{cccc}
& S_{m_2 + 2} \,, & q=1, 6 \\
&\\
& S_{3m_2 + 6} \oplus S_{2m_2 + 4}\,, & q=3, 4 \\
&\\
& 0\,, & \mr{otherwise}\\
\end{array}\qquad\,. \right .
\end{equation*}

\item Case 3 : $ m_1=0$ , $m_2$ odd.

\begin{equation*}
H^q(\partial \rS_\Gamma, \tm_\lambda) =\left\{\begin{array}{cccc}
& S_{3m_2 + 5} \oplus S_{m_2 + 3} \oplus \Q  \,, & q=2, 5 \\
&\\
& S_{2m_2 + 4} \,, & q=3, 4 \\
&\\
& 0  \,, & \mr{otherwise}\\
\end{array}\qquad\,. \right .
\end{equation*}

\item Case 4 : $m_1 (\neq  0)$ even, $m_2 = 0$.

\begin{equation*}
H^q(\partial \rS_\Gamma, \tm_\lambda) =\left\{\begin{array}{cccc}
&  S_{m_1 + 2}  \,, & q=1, 6 \\
& \\
&  S_{2m_1 + 6} \oplus S_{m_1 + 4}\,, & q=3, 4 \\
&\\
& 0  \,, & \mr{otherwise}\\
\end{array}\qquad\,. \right .
\end{equation*}

\item Case 5 : $ m_1 (\neq  0)$ even, $m_2 (\neq 0)$ even.

\begin{equation*}
H^q(\partial \rS_\Gamma, \tm_\lambda) =\left\{\begin{array}{cccc}
&  S_{m_1 + 2} \oplus S_{m_2 + 2} \oplus \Q \,, & q=1, 6 \\
& \\
&  S_{2m_1 + 3m_2+ 6} \oplus S_{m_1 + 2m_2+4}\,, & q=3, 4 \\
&\\
& 0  \,, & \mr{otherwise}\\
\end{array}\qquad\,. \right .
\end{equation*}

\item Case 6 : $ m_1 (\neq  0)$ even, $m_2$ odd.

\begin{equation*}
H^q(\partial \rS_\Gamma, \tm_\lambda) =\left\{\begin{array}{cccc}
&  S_{m_1 + 2} \,, & q=1, 6 \\
& \\
&  S_{m_1 + 3m_2+ 5} \oplus S_{m_1 + m_2+3}\,, & q=2, 5 \\
&\\
&  S_{m_1 + 2m_2+ 4} \,, & q=3, 4 \\
&\\
& 0  \,, & \mr{otherwise}\\
\end{array}\qquad\,. \right .
\end{equation*}

\item Case 7 : $ m_1$ odd, $m_2 = 0$.

\begin{equation*}
H^q(\partial \rS_\Gamma, \tm_\lambda) =\left\{\begin{array}{cccc}
& S_{m_1 + 5} \oplus S_{m_1 + 3} \oplus \Q  \,, & q=2, 5 \\
&\\
& S_{2m_1 + 6} \,, & q=3, 4 \\
&\\
& 0  \,, & \mr{otherwise}\\
\end{array}\qquad\,. \right .
\end{equation*}

\item Case 8 : $ m_1$ odd, $m_2 (\neq 0)$ even.
\begin{equation*}
H^q(\partial \rS_\Gamma, \tm_\lambda) =\left\{\begin{array}{cccc}
&  S_{m_2 + 2}  \,, & q=1, 6 \\
& \\
&  S_{m_1 + 3m_2+ 5} \oplus S_{m_1 + m_2+3}\,, & q=2, 5 \\
&\\
&  S_{2m_1+3m_2 + 6}  \,, & q=3, 4 \\
& \\
& 0  \,, & \mr{otherwise}\\
\end{array}\qquad\,. \right .
\end{equation*}

\item Case 9 : $ m_1$ odd, $m_2$ odd.

$$H^{q}(\partial \rS_\Gamma, \tm_\lambda) = 0 \,, \qquad \forall q \,.$$

\end{enumerate}
\end{thm}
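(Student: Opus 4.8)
The plan is to prove the theorem one parity class at a time, following verbatim the scheme already set up in Sections~\ref{preli} and~\ref{parity} and illustrated in full detail for the trivial coefficient system in Case~1 above. Since $G_2$ has $\Q$-rank $2$, the Borel--Serre boundary cohomology is controlled by the two-column spectral sequence \eqref{eq:e12}, which degenerates at $E_2$; hence $H^q(\partial\rS_\Gamma,\tm_\lambda)$ sits in the short exact sequence \eqref{eq:ses}, and as everything is over $\Q$ this splits, so $H^q(\partial\rS_\Gamma,\tm_\lambda)\cong E_2^{0,q}\oplus E_2^{1,q-1}$. Thus the whole problem collapses to computing, for each of the nine parity classes of $(m_1,m_2)$, the kernel and cokernel of the single differential $d_1^{0,q}\colon E_1^{0,q}\to E_1^{1,q}$ of \eqref{eq:spec3}.

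For a fixed parity class I would proceed in four steps. \emph{Step 1:} apply Lemmas~\ref{parity0} and~\ref{parity1} to single out the three sets $\overline{\W}^0,\overline{\W}^1,\overline{\W}^2$ of Weyl representatives that can possibly contribute nonzero cohomology (these are recorded at the head of each case). \emph{Step 2:} using Kostant's theorem together with the explicit lists of Kostant representatives in \S\ref{KostantRep}, tabulate the nonzero summands of $E_1^{0,q}=\bigoplus_{i=1,2}\bigoplus_{w\in\W^{\rP_i}}H^{q-\ell(w)}(\rS_\Gamma^{\rM_i},\tm_{w\cdot\lambda})$ and of $E_1^{1,q}=\bigoplus_{w\in\W^{\rP_0},\,\ell(w)=q}H^0(\rS_\Gamma^{\rM_0},\tm_{w\cdot\lambda})$, exactly as in \eqref{eq:e110q}--\eqref{eq:e111q}. \emph{Step 3:} analyze $d_1^{0,q}$. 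The summands $H^1_!(\rS_\Gamma^{\rM_i},\tm_{w\cdot\lambda})$ restrict to zero on $\partial_0$, hence always lie in $\ker d_1^{0,q}$ and pass to $E_2^{0,q}$; by \eqref{iso-gl2}, \eqref{def-cuspforms} and Eichler--Shimura they are identified with the cusp-form space $S_{a_i(\lambda,w)+2}$. The remaining ($H^0$-type, or Eisenstein-part-of-$H^1$) summands are governed by the dichotomy \eqref{parity2}--\eqref{parity3}, i.e.\ by whether $\tfrac{b_i(\lambda,w)-a_i(\lambda,w)}{2}$ is even or odd, which decides whether the corresponding restriction to $\partial_0$ is the zero map or an isomorphism onto the relevant one-dimensional summand of $E_1^{1,q}$; whenever a summand $H^0(\rS_\Gamma^{\rM_0},\tm_{w\cdot\lambda})\cong\Q$ of $E_1^{1,q}$ still needs to be hit, one invokes Harder's surjectivity result \cite{Harder87} (the case $q=0$ being the prototype used in \eqref{eq:e210}). \emph{Step 4:} read off $E_2^{0,q}=\ker d_1^{0,q}$ and $E_2^{1,q}=\mathrm{Coker}(d_1^{0,q})$ and assemble $H^q(\partial\rS_\Gamma,\tm_\lambda)$ via \eqref{eq:ses}. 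The case $m_1,m_2$ both odd is immediate, since then $\overline{\W}^i=\emptyset$ for all $i$; and Cases~7 and~8 are, as noted above, dual (via the involution of Lemma~\ref{lem-involution}, exchanging the roles of $\rP_1$ and $\rP_2$) to Cases~3 and~6, so only seven of the nine computations are genuinely independent.

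The main obstacle is Step~3: correctly pinning down the rank of $d_1^{0,q}$ in each degree. The cuspidal summands are the easy part, but one must verify that the Eisenstein contributions of the two $\rGL_2$-boundary pieces $\partial_1,\partial_2$ either cancel against, or map isomorphically onto, the corresponding torus summands coming from $\partial_0$, and that no extraneous classes survive in $E_2^{1,q}$. Because the pattern of surviving Weyl elements $\overline{\W}^i$ changes with the parity of $m_1$ and $m_2$, this bookkeeping is not uniform and has to be checked case by case, combining the parity dichotomy \eqref{parity2}--\eqref{parity3} with the surjectivity input from \cite{Harder87}; this is precisely where the argument does its real work.
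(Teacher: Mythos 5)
Your proposal is correct and follows essentially the same route as the paper: the two-column spectral sequence \eqref{eq:e12} degenerating at $E_2$, the parity Lemmas~\ref{parity0} and~\ref{parity1} to isolate $\overline{\W}^0,\overline{\W}^1,\overline{\W}^2$, the dichotomy \eqref{parity2}--\eqref{parity3} together with Harder's surjectivity to compute the rank of $d_1^{0,q}$, and the duality between Cases~7/8 and Cases~3/6 — all of which is exactly the paper's case-by-case computation in Section~\ref{boundary}.
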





\section{Eisenstein cohomology}\label{Eis}
In this section, by using the information obtained about boundary cohomology  of $\G:=G_{2}(\Z)$, we will determine the Eisenstein cohomology with coefficients in $\m_\lambda$. Let us recall that, at any degree $q$, the Eisenstein cohomology  $H^q_{Eis}(\rS_\Gamma, \tm_\lambda)$ is, by definition, the  the image of the restriction map  $r: H^q(\rS_\Gamma, \tm_\lambda) \longrightarrow H^q(\partial \rS_\Gamma, \tm_\lambda) $.

\subsection{Main result on the Eisenstein cohomology of $G_2(\Z)$}

The following is one of the main result of this article, that gives both the dimension of the Eisenstein cohomology together with its sources - the corresponding parabolic subgroups.

Indeed, it is clear from the definition that the Eisenstein cohomology $H^\bullet_{Eis}(\rS_\Gamma, \tm_\lambda)$ is defined over $\Q$ as $\tm_\lambda$ is defined over $\Q$. But in the theorem stated below, we will consider $H^\bullet_{Eis}(\rS_\Gamma, \tm_\lambda\otimes \C)$ instead. The reason is that, our method  yields a basis of $H^\bullet_{Eis}(\rS_\Gamma, \tm_\lambda\otimes\C)$, but since the method is transcendental, the basis we get is not necessarily defined over $\Q$.
Let $\Sigma_k$ be the canonical basis of normalized eigenfunctions of $S_k$. For $i=1,2$, we set $k_i(\lambda, w)=a_i(\lambda, w)+2$, where $a_i(\lambda, w)$ is the constant defined in \eqref{def-a-b}.   Then by the Eichler-Shimura isomorphism, for $i=1,2$, we have
\begin{align*}
  H^1_{!}(\rS^{\rM_i}_\Gamma, \tm_{w\cdot \lambda}\otimes \C)=\bigoplus_{\psi\in \Sigma_{k_i(\lambda, w)}}H^1_{!}(\rS^{\rM_i}_\Gamma, \tm_{w\cdot \lambda}\otimes \C)(\psi)\,,
\end{align*}
where the $\C$-vector spaces $H^1_{!}(\rS^{\rM_i}_\Gamma, \tm_{w\cdot \lambda}\otimes \C)(\psi)$ are of dimension $1$. Set
\begin{equation*}
  \cY_k=\{\psi\in \Sigma_k: L(1/2, \pi^\psi)\ne 0\} \quad \text{and} \quad \cZ_k=\{\psi \in \Sigma_k: L(1/2, Sym^3\pi^\psi)\ne 0\}.
\end{equation*}

\begin{thm}\label{Eiscoh}$\,\,\,$
\begin{enumerate}
\item Case 1 : $m_1 = 0=m_2$.
\begin{equation*}
H^{q}_{Eis}(\rS_\Gamma, \tm_\lambda\otimes \C) =\left\{\begin{array}{cccc}
&\mathbb{C} \quad \mr{for}\quad  q=0 \\
&\\
&0  \qquad \mr{otherwise}\\
\end{array}\qquad\,. \right .
\end{equation*}

\item Case 2 : $ m_1 = 0$, $m_2 (\neq 0)$ even.

\begin{equation*}
H^{q}_{Eis}(\rS_\Gamma, \tm_\lambda\otimes \C) =\left\{\begin{array}{cccc}
&  \bigoplus\limits_{\psi\in \cZ_{2m_2+4}} \C \psi\,, & q=3 \\
&\\
& S_{3m_2 + 6} \oplus \bigoplus\limits_{\psi\notin \cZ_{2m_2+4}}  \C \psi \,, & q=4 \\
& \\
&  S_{m_2 + 2} \,, & q=6 \\
&\\
& 0 \,, & \mr{otherwise}\\
\end{array}\qquad\,. \right .
\end{equation*}

\item Case 3 : $ m_1=0$ , $m_2$ odd.

\begin{equation*}
H^{q}_{Eis}(\rS_\Gamma, \tm_\lambda\otimes \C) =\left\{\begin{array}{cccc}
& \bigoplus\limits_{\psi\in \cZ_{2m_2+4}}  \C \psi\,, & q=3 \\
&\\
&\bigoplus\limits_{\psi\notin \cZ_{2m_2+4}}  \C \psi \,, & q=4 \\
& \\
&  S_{3m_2+5} \oplus S_{m_2+3} \oplus \C \,, & q=5 \\
&\\
& 0  \,, & \mr{otherwise}\\
\end{array}\qquad\,. \right .
\end{equation*}

\item Case 4 : $m_1 (\neq  0)$ even, $m_2 = 0$.

\begin{equation*}
H^{q}_{Eis}(\rS_\Gamma, \tm_\lambda\otimes \C) =\left\{\begin{array}{cccc}
&  \bigoplus\limits_{\psi\in \cY_{2m_2+6}}  \C \psi\,, & q=3 \\
&\\
&   \bigoplus\limits_{\psi\notin \cY_{2m_2+6}}  \C \psi \oplus S_{m_1+4} \,, & q=4 \\
& \\
&  S_{m_1 + 2} \,, & q=6 \\
&\\
& 0 \,, & \mr{otherwise}\\
\end{array}\qquad\,. \right .
\end{equation*}

\item Case 5 : $ m_1 (\neq  0)$ even, $m_2 (\neq 0)$ even.

\begin{equation*}
H^{q}_{Eis}(\rS_\Gamma, \tm_\lambda\otimes \C) =\left\{\begin{array}{cccc}
&  S_{2m_1 + 3m_2+ 6} \oplus S_{m_1 + 2m_2+4}\,, & q=4 \\
& \\
&  S_{m_1 + 2} \oplus S_{m_2 + 2} \oplus \C \,, & q=6 \\
&\\
& 0  \,, & \mr{otherwise}\\
\end{array}\qquad\,. \right .
\end{equation*}

\item Case 6 : $ m_1 (\neq  0)$ even, $m_2$ odd.

\begin{equation*}
H^{q}_{Eis}(\rS_\Gamma, \tm_\lambda\otimes \C) =\left\{\begin{array}{cccc}
&  S_{m_1 + 2m_2+ 4} \,, & q=4 \\
& \\
&  S_{m_1 + 3m_2+ 5} \oplus S_{m_1 + m_2+3}\,, & q=5 \\
&\\
&  S_{m_1 + 2} \,, & q=6 \\
&\\
& 0  \,, & \mr{otherwise}\\
\end{array}\qquad\,. \right .
\end{equation*}

\item Case 7 : $ m_1$ odd, $m_2 = 0$.

\begin{equation*}
H^{q}_{Eis}(\rS_\Gamma, \tm_\lambda\otimes \C) =\left\{\begin{array}{cccc}
&  \bigoplus\limits_{\psi\in \cY_{2m_2+6}}  \C \psi\,, & q=3 \\
&\\
&   \bigoplus\limits_{\psi\notin \cY_{2m_2+6}}  \C \psi \,, & q=4 \\
& \\
&  S_{m_1 + 5} \oplus S_{m_1 + 3} \oplus \C \,, & q=5\\
&\\
& 0 \,, & \mr{otherwise}\\
\end{array}\qquad\,. \right .
\end{equation*}

\item Case 8 : $ m_1$ odd, $m_2 (\neq 0)$ even.
\begin{equation*}
H^{q}_{Eis}(\rS_\Gamma, \tm_\lambda\otimes \C) =\left\{\begin{array}{cccc}
&  S_{2m_1+3m_2 + 6}  \,, & q=4 \\
& \\
&  S_{m_1 + 3m_2+ 5} \oplus S_{m_1 + m_2+3}\,, & q=5 \\
&\\
&  S_{m_2 + 2}  \,, & q=6 \\
& \\
& 0  \,, & \mr{otherwise}\\
\end{array}\qquad\,. \right .
\end{equation*}

\item Case 9 : $ m_1$ odd, $m_2$ odd.

$$H^{q}_{Eis}(\rS_\Gamma, \tm_\lambda) = 0 \,, \qquad \forall q \,.$$

\end{enumerate}
\end{thm}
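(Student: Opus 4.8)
The plan is to combine Poincaré duality on the boundary, the fact that $H^\bullet_{Eis}$ is a maximal isotropic subspace of the boundary cohomology (recalled in the introduction), and an explicit construction of Eisenstein classes out of the boundary cohomology determined in Theorem~\ref{bdg2}. First the duality bookkeeping: since $\dim \rS_\Gamma = \dim G_2(\R) - \dim K_\infty = 8$, the boundary $\partial\rS_\Gamma$ is a closed $7$-manifold, and as $\m_\lambda$ is self-dual for $G_2$, Poincaré duality furnishes perfect pairings $H^q(\partial\rS_\Gamma, \tm_\lambda\otimes\C)\times H^{7-q}(\partial\rS_\Gamma, \tm_\lambda\otimes\C)\to\C$ under which $\mathrm{Im}(r) = H^\bullet_{Eis}$ is its own orthogonal complement; hence
\[\dim_\C H^q_{Eis}(\rS_\Gamma, \tm_\lambda\otimes\C) + \dim_\C H^{7-q}_{Eis}(\rS_\Gamma, \tm_\lambda\otimes\C) = \dim_\C H^q(\partial\rS_\Gamma, \tm_\lambda\otimes\C)\]
for every $q$. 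It therefore suffices to produce inside $\mathrm{Im}(r)$ a family of classes of total dimension $\tfrac12\dim_\C H^\bullet(\partial\rS_\Gamma, \tm_\lambda\otimes\C)$ with the asserted degree distribution; maximality then forces this family to exhaust $H^\bullet_{Eis}$. The extreme degrees are immediate: $1\in H^0(\rS_\Gamma)$ restricts nontrivially, so $H^0_{Eis} = H^0(\partial\rS_\Gamma)$, while $\vcd\, G_2(\Z) = 6$ gives $H^7(\rS_\Gamma) = 0$, hence $H^7_{Eis} = 0$; this settles Case~1, and Case~9 is trivial as $H^\bullet(\partial\rS_\Gamma) = 0$ there.

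\textbf{Construction of the cuspidal classes (the core).} Fix $i\in\{1,2\}$ and an associate pair $\{w, w'\}\subset\W^{\rP_i}$ with $\ell(w) < \ell(w') = \dim\rN_i - \ell(w)$ (Lemma~\ref{lem-involution}) for which $H^1_!(\rS^{\rM_i}_\Gamma, \tm_{w\cdot\lambda}\otimes\C)\cong S_{k_i(\lambda,w)}\otimes\C$ is nonzero, and take a Hecke eigenform $\psi$ in it with associated cuspidal representation $\pi^\psi$ of $\rM_i(\mathbb A)\cong\rGL_2(\mathbb A)$. I would form the Eisenstein series $E(\phi_\psi, s)$ induced from $\pi^\psi\otimes|\cdot|^s$ on $\rP_i$ and evaluate it at the cohomological point $s_0 = s_0(\lambda,w)$, namely the value of $s$ at which the infinitesimal character of the induced representation coincides with that of $\m_\lambda^\vee$; this $s_0$ is read off from $b_i(\lambda,w)$. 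By the Langlands--Shahidi method the constant term of $E(\phi_\psi, s)$ along $\rP_i$ has the shape $\phi_{\psi,s} + c(s)\phi_{\psi,-s}$, with $c(s)$ a ratio of completed $L$-functions of $\pi^\psi$: for the Heisenberg one of the two maximal parabolics the governing factor is the symmetric cube $L(s,\mathrm{Sym}^3\pi^\psi)$ (accompanied by a Dedekind zeta factor), and for the three-step one it is the standard $L(s,\pi^\psi)$ (again with a zeta factor). When $s_0$ is off the centre of the relevant functional equation --- which, as a computation with $b_i(\lambda,w)$ shows, is the case for every admissible pair except the one carrying the cusp forms of weight $2m_2+4$ when $m_1 = 0$ and the one carrying the cusp forms of weight $2m_1+6$ when $m_2 = 0$ --- the series $E(\phi_\psi, s)$ is holomorphic at $s_0$ with $c(s_0)$ finite and nonzero, and, bookkeeping with the exact sequence~\eqref{LongExactSequance} and the $(\fg, K_\infty)$-cohomology of the induced representation, the resulting Eisenstein cohomology class lies purely in the upper degree $\ell(w')+1$. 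In the two exceptional families $s_0$ is the central point, and the position of the class is dictated by the central $L$-value: if the relevant central value $L(1/2,\mathrm{Sym}^3\pi^\psi)$, respectively $L(1/2,\pi^\psi)$, is nonzero --- this being precisely membership of $\psi$ in the set $\cZ_{k}$, respectively $\cY_{k}$, of the statement --- then $c(s)$ has a pole at $s_0$ coming from its zeta factor, $E(\phi_\psi, s)$ has a simple pole there, and its residue is a square-integrable class contributing to the lower degree $\ell(w)+1$; if the central value vanishes, $E(\phi_\psi, s)$ is holomorphic at $s_0$ with $c(s_0)$ finite nonzero and, after a finer analysis of the $(\fg, K_\infty)$-cohomology of the now reducible degenerate principal series together with the Hecke action, the class lands in the upper degree $\ell(w')+1$.

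\textbf{The remaining classes and conclusion.} The one-dimensional summands of the boundary cohomology --- the $\Q$-summands for $\lambda\ne 0$ and the trivial coefficient system --- come from the constant functions $H^0(\rS^{\rM_i}_\Gamma,\cdot)$ on the Levi strata and from the minimal parabolic, and are handled by the associated Borel and residual Eisenstein series, whose constant terms are products of Dedekind zeta factors; a direct computation places the surviving classes in the upper degree of their dual pair, producing the $\C$-summands of $H^\bullet_{Eis}$ in degrees $5$ and $6$. Running through the nine cases of Theorem~\ref{bdg2}, the classes so obtained are linearly independent --- distinguished by the Hecke action on $\rS_\Gamma$ and by their parabolic of origin --- they lie in $\mathrm{Im}(r)$, and their total dimension equals $\tfrac12\dim_\C H^\bullet(\partial\rS_\Gamma, \tm_\lambda\otimes\C)$ with the degree distribution asserted; by maximality of $\mathrm{Im}(r)$ they exhaust $H^\bullet_{Eis}(\rS_\Gamma, \tm_\lambda\otimes\C)$, which is the theorem.

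\textbf{Main obstacle.} The hard part is the analysis of the second paragraph: pinning down the exact cohomological point $s_0$ for each Kostant representative, determining the order of pole of $E(\phi_\psi, s)$ there, and, in the central-value families, proving that vanishing of $L(1/2,\pi^\psi)$, respectively $L(1/2,\mathrm{Sym}^3\pi^\psi)$, pushes the Eisenstein class from the lower into the upper degree. This requires the known analytic properties of the symmetric cube $L$-function of a $\rGL_2$ cusp form (holomorphy and functional equation; Kim--Shahidi), an explicit description of the $(\fg, K_\infty)$-cohomology of the relevant degenerate principal series of $G_2$, and Poincaré duality together with Hecke equivariance to rule out the remaining possibilities.
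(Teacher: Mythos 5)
Your proposal is correct and follows essentially the same route as the paper: Poincaré duality making $H^\bullet_{Eis}$ a maximal isotropic subspace of $H^\bullet(\partial\rS_\Gamma,\tm_\lambda)$, the Hecke-equivariant pairing of associate Kostant representatives $\{w,w'\}$, the Langlands--Shahidi constant term computation identifying $L(s,\pi)$ for one maximal parabolic and $L(s,\mathrm{Sym}^3\pi)$ (entire by Kim--Shahidi) for the Heisenberg one, the dichotomy at the central point governed by $L(1/2,\pi^\psi)$ resp.\ $L(1/2,\mathrm{Sym}^3\pi^\psi)$ placing the class in the lower degree (via the residue) or the upper degree, and the residual classes from the Borel supplying the remaining $\C$-summands. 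The only cosmetic difference is your shortcut for Case~1 via the constant function and $\vcd\,G_2(\Z)=6$, where the paper instead takes an iterated residue of the Borel Eisenstein series; both are fine.
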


Now, the proof of Theorem~\ref{Eiscoh} will occupy the rest of the paper, and will follow in several steps. The proof closely follow the strategy developed in~\cite{Harder2012} (see also~\cite{LiSch93}, \cite{Sch86} and~\cite{Sch90}).

\subsection{General Strategy.}
 We now briefly describe the strategy which will be detailed out in the rest  of this section. As aforementioned, our approach  relies on the fact that the Eisenstein cohomology spans a maximal isotropic subspace of the boundary cohomology with respect to the Poincar\'e dual pairing (see Theorem \ref{T-dual}). Indeed, certain cohomology classes are constructed using the theory of Eisenstein series, we are done if the cohomology classes thus constructed spans a maximal isotropic subspace.

More precisely, let $\omega$ be a  harmonic differential form that represents certain cohomology class in the boundary cohomology $H^{q}_!(\partial_i, \tm_\lambda) (i=0, 1, 2)$. By mimicking the construction of Eisenstein series, we get a family of differential forms $E(\omega, \theta)$ on $\rS_\Gamma$, where $\theta$ is a certain parameter, will be discussed in Section~\ref{sec:ecc}. If the differential forms $E(\omega, \theta)$ is holomorphic at certain $\theta_{\omega}$, we get a non-trivial harmonic form $E(\omega, \theta_\omega)$ that represents certain cohomology class in $H^{q}(\rS_\Gamma, \tm_\lambda)$ on $\rS_\Gamma$. By restricting the harmonic form back to the boundary, we get a non-trivial Eisenstein cohomology class in $H^{q}_{Eis}(\partial \rS_\Gamma, \tm_\lambda)$. This geometric formulation is closely related to the theory of $(\fg, K_\infty)$-cohomology and the classical Eisenstein series. In particular, the restriction of the cohomology class $E(\omega, \theta_\omega)$ to the boundary can be computed using the constant term of Eisenstein series.
On the other hand, if the the differential form $E(\omega, \theta)$ has a simple pole at $\theta_{\omega}$ (or along some hyperplane that contains $\theta_{\omega}$), by taking residue, we still get a differential form $E'(\omega, \theta_\omega)$, whose restriction to the boundary also gives certain Eisenstein cohomology class.

The study of Eisenstein cohomology is basically divided into two parts. In the first part, we study the Eisenstein cohomology classes that comes from maximal boundary components, that is, those constructed from cohomology class in $H^{\bullet}_!(\partial_i, \tm_\lambda)$ with $i=1, 2$.
In the second part, we study the Eisenstein cohomology classes that comes from the minimal boundary component, that is, those constructed from cohomology class in $H^{\bullet}_!(\partial_0, \tm_\lambda)$.

\subsection{Poincar\'e duality.}\label{sec:poincare}
For simplicity, we write
\begin{equation*}
  H^q_!(\partial \rS_\Gamma, \tm_\lambda):= H^q_!(\partial_1, \tm_\lambda)\oplus H^q_!(\partial_2, \tm_\lambda) \subset H^q(\partial \rS_\Gamma, \tm_\lambda),
\end{equation*}
and
\begin{equation*}
  H^q_{!, Eis}(\partial \rS_\Gamma, \tm_\lambda):= H^q_{Eis}(\rS_\Gamma, \tm_\lambda)\cap H^q_{!}(\partial \rS_\Gamma, \tm_\lambda).
\end{equation*}

We shall need the following theorem from~\cite{HR2020}.
\begin{thm}\label{T-dual}
 Under the Poincar\'e dual pairing $\langle\cdot, \cdot \rangle$
$$H^q(\partial \rS_\Gamma, \tm_\lambda)\times H^{7-q}(\partial \rS_\Gamma, \tm_\lambda)\rightarrow \Q, \quad H_!^q(\partial \rS_\Gamma, \tm_\lambda)\times H_!^{7-q}(\partial \rS_\Gamma, \tm_\lambda)\rightarrow \Q,$$
  we have
    $$H^q_{Eis}(\partial \rS_\Gamma, \tm_\lambda)=H^{7-q}_{Eis}(\partial \rS_\Gamma, \tm_\lambda)^\perp, \quad H^q_{!,Eis}(\partial \rS_\Gamma, \tm_\lambda)=H^{7-q}_{!,Eis}(\partial \rS_\Gamma, \tm_\lambda)^\perp .$$
    In particular, the Eisenstein cohomology is a maximal isotropic subspace of the boundary cohomology under the Poincar\'e duality.
\end{thm}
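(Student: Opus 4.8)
The plan is to derive Theorem~\ref{T-dual} from Poincar\'e--Lefschetz duality for the Borel--Serre compactification $\overline{\rS}_\Gamma$ combined with the long exact cohomology sequence of the pair $(\overline{\rS}_\Gamma,\partial\rS_\Gamma)$, following the classical argument of Borel--Serre and Harder. First I would assemble the geometric input. Since $\dim\rS=\dim G_2-\dim K_\infty=14-6=8$ and $\mathrm{rank}_{\Q}G_2=2$, the space $\overline{\rS}_\Gamma$ is a compact oriented $8$-dimensional orbifold-with-corners and $\partial\rS_\Gamma$ is a closed $7$-dimensional orbifold, so over $\Q$ the relevant fundamental classes exist and Poincar\'e--Lefschetz duality applies. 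Moreover, as the longest Weyl element of $G_2$ equals $-1$ (used already in Lemma~\ref{lem-involution}), the representation $\m_\lambda$ is self-dual and carries a nondegenerate invariant pairing $\m_\lambda\otimes\m_\lambda\to\Q$; cup product, this pairing, and integration against the fundamental classes then yield the nondegenerate pairings
\[
H^q_c(\rS_\Gamma,\tm_\lambda)\times H^{8-q}(\rS_\Gamma,\tm_\lambda)\to\Q,\qquad H^q(\partial\rS_\Gamma,\tm_\lambda)\times H^{7-q}(\partial\rS_\Gamma,\tm_\lambda)\to\Q.
\]

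The central step is to combine these dualities with the exact sequence
\[
\cdots\to H^q_c(\rS_\Gamma,\tm_\lambda)\to H^q(\rS_\Gamma,\tm_\lambda)\xrightarrow{\,r\,}H^q(\partial\rS_\Gamma,\tm_\lambda)\xrightarrow{\,\delta\,}H^{q+1}_c(\rS_\Gamma,\tm_\lambda)\to\cdots,
\]
which identifies $H^q_{Eis}(\partial\rS_\Gamma,\tm_\lambda)=\mathrm{im}(r^q)=\ker(\delta^q)$. The classical compatibility statement is that, under the duality isomorphisms $H^q(\rS_\Gamma,\tm_\lambda)\cong H^{8-q}_c(\rS_\Gamma,\tm_\lambda)^{\ast}$ and $H^q(\partial\rS_\Gamma,\tm_\lambda)\cong H^{7-q}(\partial\rS_\Gamma,\tm_\lambda)^{\ast}$, the transpose of the restriction map $r^q$ is, up to sign, the connecting homomorphism $\delta^{7-q}$. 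Granting this, one computes
\[
H^q_{Eis}(\partial\rS_\Gamma,\tm_\lambda)^{\perp}=\ker\big((r^q)^{T}\big)=\ker\big(\delta^{7-q}\big)=\mathrm{im}\big(r^{7-q}\big)=H^{7-q}_{Eis}(\partial\rS_\Gamma,\tm_\lambda),
\]
which is the first assertion. Summing over all $q$, the total Eisenstein cohomology equals its own annihilator under the pairing identifying $H^q$ with the dual of $H^{7-q}$, hence is a maximal isotropic (Lagrangian) subspace; isotropy can also be seen directly, as $r^q(\alpha)\cup r^{7-q}(\beta)$ pairs to zero against $[\partial\rS_\Gamma]$ because $\alpha\cup\beta$ extends over $\overline{\rS}_\Gamma$.

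For the inner part, write $H^\bullet_!(\partial\rS_\Gamma,\tm_\lambda)=H^\bullet_!(\partial_1,\tm_\lambda)\oplus H^\bullet_!(\partial_2,\tm_\lambda)$. I would first verify that the Poincar\'e pairing on $H^\bullet(\partial\rS_\Gamma,\tm_\lambda)$ restricts to a nondegenerate pairing on this subspace --- which is standard, since the inner cohomology of each closed face $\partial_i$ is by definition the image of $H^\bullet_c\to H^\bullet$ and the Poincar\'e pairing of a closed orbifold is nondegenerate precisely on that image --- and that the duality-transpose argument localizes to $H^\bullet_!$. The same computation carried out inside $H^\bullet_!(\partial\rS_\Gamma,\tm_\lambda)$ then gives $H^q_{!,Eis}(\partial\rS_\Gamma,\tm_\lambda)^{\perp}=H^{7-q}_{!,Eis}(\partial\rS_\Gamma,\tm_\lambda)$ and maximal isotropy inside $H^\bullet_!$.

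The step requiring the most care is the sign-correct commutativity of the Lefschetz-duality square with the connecting homomorphism $\delta$ --- a classical but delicate point about cup and cap products on a manifold with boundary --- together with, for the inner version, checking that the restricted Poincar\'e pairing on $H^\bullet_!(\partial\rS_\Gamma,\tm_\lambda)$ remains perfect and compatible with the duality identifications (equivalently, that $\mathrm{im}(r^q)\cap H^q_!$ is annihilated exactly by its counterpart in complementary degree). Both facts are available in the literature, so in the write-up I would either invoke them (e.g.\ \cite{BoSe73}, \cite{HR2020}) or reproduce the standard diagram chase.
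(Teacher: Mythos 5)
The paper does not actually prove this statement: it is imported wholesale with the sentence ``We shall need the following theorem from~\cite{HR2020}''. So there is no internal proof to compare against, and your proposal is a reconstruction of the standard argument that underlies the cited result. That reconstruction is essentially correct for the first assertion: the numerology ($\dim\rS=8$, $\partial\rS_\Gamma$ of dimension $7$), the self-duality of $\m_\lambda$ via $w_{G_2}=-1$, Poincar\'e--Lefschetz duality for the pair $(\overline{\rS}_\Gamma,\partial\rS_\Gamma)$, and the identification of the transpose of $r^q$ with $\pm\delta^{7-q}$ in the long exact sequence give exactly $\mathrm{im}(r^q)^{\perp}=\ker(\delta^{7-q})=\mathrm{im}(r^{7-q})$, hence maximal isotropy. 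This is the classical Borel--Serre/Harder argument and is what \cite{HR2020} proves; writing it out is more than the paper does.

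The one substantive gap is the inner-cohomology statement, which does not follow by ``the same computation carried out inside $H^\bullet_!$''. Two points need attention. First, your justification for nondegeneracy of the restricted pairing misreads the geometry: $\partial_1$ and $\partial_2$ are not closed faces (their closures contain $\partial_0$), and $H^\bullet_!(\partial_i,\tm_\lambda)$ is defined through the inner (cuspidal) cohomology of the $\rGL_2$-Levi, not as $\mathrm{im}(H^\bullet_c(\partial_i)\to H^\bullet(\partial_i))$; the perfectness of the pairing on this subspace comes from Poincar\'e duality for interior cohomology of the modular curve together with Lemma~\ref{lem-involution}. Second, even granting that $H^\bullet(\partial\rS_\Gamma,\tm_\lambda)=H^\bullet_!\oplus(H^\bullet_!)^{\perp}$, the identity $E=E^{\perp}$ for $E=H^\bullet_{Eis}$ does not formally imply $E\cap H^\bullet_!=(E\cap H^\bullet_!)^{\perp_{H_!}}$: one computes $(E'\cap W')^{\perp_W}=(E+(W')^{\perp})\cap W$, which contains $E\cap W$ strictly unless $E$ decomposes compatibly with $H^\bullet_!\oplus(H^\bullet_!)^{\perp}$. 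That compatibility is true here, but it requires an extra input --- Hecke-equivariance of the pairing and of the restriction map, which separates the cuspidal-support part from the rest (this is precisely the mechanism the paper exploits immediately after the theorem, in Lemma~\ref{L-hec-dual}). You flag this step as delicate and defer it to the literature, which is acceptable, but as written the deduction of the second displayed equality is incomplete.
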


 Let $\mathbb{A}$ (resp. $\mathbb{A}_f$) be the ring of adeles (resp. finite adeles) of $\Q$ and $K_f=\prod_{p} G_2(\Z_p)$. It is clear that the Poincar\'e dual pairings are Hecke equivariant,  hence Theorem~\ref{T-dual} can be further refined by considering the Hecke action.
 Now let $i=1,2$,  the inner cohomology $H^\bullet_!(\partial_i, \tm_\lambda)$, considered as a $G_2(\A_f)$ module, can be decomposed as
\begin{align*}
  H^\bullet_!(\partial_i, \tm_\lambda) &= \bigoplus_{w\in \W^{\rP_i}} H^{1+\ell(w)}(S^{\rM_i}_\Gamma, \tm_{w\cdot \lambda}) \\
   &=\bigoplus_{w\in \W^{\rP_i}} \bigoplus_{\pi=\pi_{\infty}\otimes \pi_f} m_0(\pi) H^{1+\ell(w)} (\fm_i, K^{\rM_i}_\infty, \pi_\infty\otimes M_{w\cdot \lambda})(\pi_f).
\end{align*}
Here $\pi$ denotes a cuspidal automorphic representation of $\rM_i(\A)$ with unramified $\pi_f$ and $m_0(\pi)$ denotes the multiplicity of $\pi$. A cohomology class in $H^\bullet_!(\partial_i, \tm_\lambda)$ is said to be of type $(\pi, w)$ if it comes from the summand $H^{1+\ell(w)} (\fm_i, K^{\rM_i}, \pi_\infty\otimes M_{w\cdot \lambda})(\pi_f)$ in the above decomposition.

Now let $\beta_1, \beta_2\in H^\bullet_!(\partial_i, \tm_\lambda)$ be cohomology classes of type $(\pi_1, w_1)$ and $(\pi_2, w_2)$ respectively. Recall that the Poincare dual pairing is also $G_2(\A_f)$ equivariant, hence $\langle\beta_1, \beta_2\rangle=0$ if $\pi_{1, f}\ne \pi_{2,f}$, which implies $\pi_1 \ne \pi_2$ be strong multiplicity one. On the other hand, for dimension reasons, $\langle\beta_1, \beta_2\rangle=0$ only when $\ell(w_1)+\ell(w_2)=5$, which is equivalent to saying that $w_1$ is mapped to $w_2$ under the involution introduced in Lemma \ref{lem-involution}. In conclusion, we get
\begin{lema}\label{L-hec-dual}
Let $i=1, 2$ and $\beta_1, \beta_2\in H^\bullet_!(\partial_i, \tm_\lambda)$ be cohomology classes of type $(\pi_1, w_1)$ and $(\pi_2, w_2)$ respectively. Then $\langle\beta_1, \beta_2\rangle$ is nonzero only if $\pi_1=\pi_2$ and $w_1=w_2'$.
\end{lema}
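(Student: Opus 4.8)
The plan is to deduce the statement from Theorem~\ref{T-dual} together with the Hecke-module structure of $H^\bullet_!(\partial_i,\tm_\lambda)$ already spelled out in the excerpt, plus the length constraint coming from Lemma~\ref{lem-involution}. The key observation is that we have two independent ways for a pairing $\langle\beta_1,\beta_2\rangle$ to vanish: a ``spectral'' obstruction ($\pi_{1,f}\neq\pi_{2,f}$) and a ``degree'' obstruction ($\ell(w_1)+\ell(w_2)\neq 5$); both must fail for the pairing to have a chance of being nonzero. So first I would recall that the cup-product/Poincar\'e pairing on the boundary is $G_2(\A_f)$-equivariant, which is part of the statement of Theorem~\ref{T-dual} (the pairings there descend from Hecke-equivariant pairings); this is the input that lets us argue Hecke-isotypically.

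First I would fix $i\in\{1,2\}$ and decompose $H^\bullet_!(\partial_i,\tm_\lambda)$ as the $K_f$-invariants of the finite part, using the decomposition displayed just before the lemma,
\[
  H^\bullet_!(\partial_i, \tm_\lambda)=\bigoplus_{w\in \W^{\rP_i}} \bigoplus_{\pi} m_0(\pi)\, H^{1+\ell(w)}(\fm_i,K^{\rM_i}_\infty,\pi_\infty\otimes M_{w\cdot\lambda})(\pi_f).
\]
Take $\beta_1$ of type $(\pi_1,w_1)$ and $\beta_2$ of type $(\pi_2,w_2)$. For the spectral obstruction: if $\pi_{1,f}\not\cong\pi_{2,f}$, then $\beta_1$ and $\beta_2$ live in Hecke-isotypic components attached to distinct characters of the unramified Hecke algebra, and since the pairing is Hecke-equivariant (adjoint operators act by the same eigenvalue on both sides only when the systems agree), $\langle\beta_1,\beta_2\rangle=0$ — a standard orthogonality-of-eigenspaces argument. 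Since $\pi_i$ are cuspidal automorphic representations of $\rM_i(\A)\cong\rGL_2(\A)$ which are unramified everywhere, strong multiplicity one upgrades $\pi_{1,f}\cong\pi_{2,f}$ to $\pi_1=\pi_2$. Hence a nonzero pairing forces $\pi_1=\pi_2$.

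Next, the degree obstruction. The Poincar\'e pairing on $\partial\rS_\Gamma$ is between $H^q$ and $H^{7-q}$; restricted to the faces $\partial_i$, a class of type $(\pi,w)$ sits in degree $1+\ell(w)$ (it comes from $H^{1+\ell(w)}(S^{\rM_i}_\Gamma,\tm_{w\cdot\lambda})$). Thus $\langle\beta_1,\beta_2\rangle$ can be nonzero only if $(1+\ell(w_1))+(1+\ell(w_2))=7$, i.e.\ $\ell(w_1)+\ell(w_2)=5$. By Lemma~\ref{lem-involution}(a), applied with $\rP=\rP_i$ (so $\dim\rN_{\rP_i}=5$), the involution $w\mapsto w'=w_{\rM_i}w\,w_{G_2}$ on $\W^{\rP_i}$ is exactly characterized by $\ell(w)+\ell(w')=5$; since $\W^{\rP_i}$ is a chain with one element in each length $0,1,\dots,5$ (see~\eqref{eq:Weyl12}), $w_2$ is the unique element of $\W^{\rP_i}$ with $\ell(w_2)=5-\ell(w_1)$, namely $w_2=w_1'$. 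Combining the two obstructions gives: $\langle\beta_1,\beta_2\rangle\neq 0$ implies $\pi_1=\pi_2$ and $w_1=w_2'$ (equivalently $w_2=w_1'$, as the involution is its own inverse), which is the claim.

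I expect the only genuinely delicate point to be the rigorous justification that the Poincar\'e dual pairing respects the $G_2(\A_f)$-module structure and hence kills pairs of distinct Hecke eigensystems — i.e.\ writing down precisely why Hecke operators are (up to transpose/adjoint) self-adjoint for this pairing so that eigenspaces for different systems are orthogonal. This is ``well-known'' and is implicitly contained in the statement of Theorem~\ref{T-dual} and the discussion preceding the lemma, so in the write-up I would simply invoke Hecke-equivariance of the pairing and strong multiplicity one for $\rGL_2$; the length bookkeeping via Lemma~\ref{lem-involution} and~\eqref{eq:Weyl12} is then entirely routine.
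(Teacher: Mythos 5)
Your proof is correct and follows the same route as the paper: Hecke equivariance of the Poincar\'e pairing plus strong multiplicity one for $\rGL_2$ forces $\pi_1=\pi_2$, and the degree constraint $(1+\ell(w_1))+(1+\ell(w_2))=7$ combined with the involution of Lemma~\ref{lem-involution} forces $w_1=w_2'$. The only difference is that you spell out the bookkeeping (the dimension $\dim\rN_{\rP_i}=5$ and the fact that $\W^{\rP_i}$ has one element per length) slightly more explicitly than the paper does.
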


Let $\W^{\rP_i}_>=\{w\in \W^{\rP_i}: \ell(w)\ge \ell(w')\}$.
In view of this lemma, we may regroup $H_{!}^\bullet(\partial_i, \tm_\lambda)$ using parameters $w\in \W^{\rP_i}_>$ as follows:
\[ H^\bullet_{!}(\partial_i, \tm_\lambda)=\bigoplus_{w\in \W^{\rP_i}_>}\bigoplus_{\psi\in \Sigma_{k_i(\lambda, w)}}
H^\bullet_{!}(\partial_i, \tm_\lambda)(\pi^\psi_f, w),\]
with
\begin{equation*}
  H^\bullet_{!}(\partial_i, \tm_\lambda)(\pi^\psi_f, w) :=H^{1+\ell(w')}_{!}(\rS^{\rM_i}_\Gamma, \tm_{w'\cdot \lambda})(\pi^\psi_f) \oplus H^{1+\ell(w)}_{!}(\rS^{\rM_i}_\Gamma, \tm_{w\cdot \lambda})(\pi^\psi_f),
\end{equation*}
where $\pi^\psi$ denotes the automorphic representation associated to the Hecke eigenform $\psi$.
By the multiplicity one theorem, $\dim H^{1+\ell(w)}_{!}(\rS^\rM, \tm_{w\cdot \lambda})(\pi^\psi_f)=1$ for any $(\pi^\psi, w)$. Hence, by combining Theorem \ref{T-dual} and Lemma \ref{L-hec-dual}, we get
\begin{prop}\label{prop-main}
The Eisenstein cohomology $H^\bullet_{!,Eis}(\partial \rS_\Gamma, \tm_\lambda)$ decomposes as
\begin{equation*}
  H^\bullet_{!,Eis}(\partial \rS_\Gamma, \tm_\lambda)=H^\bullet_{!,Eis}(\partial_1, \tm_\lambda)\oplus H^\bullet_{!,Eis}(\partial_2, \tm_\lambda),
\end{equation*}
where
\begin{equation*}
  H^\bullet_{!,Eis}(\partial_i, \tm_\lambda)=\bigoplus_{w\in \W^{\rP_i}_>}\bigoplus_{\psi\in \Sigma_{k_i(\lambda, w)}} H_{!,Eis}(\partial_i, \tm_\lambda)(\pi^\psi, w),
\end{equation*}
with $H^\bullet_{!,Eis}(\partial_i, \tm_\lambda)(\pi^\psi, w)$ equals either $H^{1+\ell(w')}_{!}(\rS^{\rM_i}_\Gamma, \tm_{w'\cdot \lambda})(\pi^\psi_f) $ or $H^{1+\ell(w)}_{!}(\rS^{\rM_i}_\Gamma, \tm_{w\cdot \lambda})(\pi^\psi_f)$.
\end{prop}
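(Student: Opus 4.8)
The plan is to deduce Proposition~\ref{prop-main} from Theorem~\ref{T-dual} (the Eisenstein cohomology is a maximal isotropic subspace of the boundary cohomology under the Poincar\'e pairing), Lemma~\ref{L-hec-dual} (orthogonality of distinct Hecke types), the multiplicity one theorem, and---crucially---the fact that $H^\bullet_{Eis}$ is a \emph{graded} Hecke submodule. First I would observe that $H^\bullet_{Eis}(\rS_\Gamma,\tm_\lambda)$, being the image of the Hecke-equivariant restriction map $r$, is a graded $G_2(\A_f)$-submodule of $H^\bullet(\partial\rS_\Gamma,\tm_\lambda)$, so that $H^\bullet_{!,Eis}(\partial\rS_\Gamma,\tm_\lambda)$ is a graded Hecke submodule of $H^\bullet_!(\partial\rS_\Gamma,\tm_\lambda)$. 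Using the Kostant fibration formula together with Nomizu's theorem, the Eichler--Shimura isomorphism and multiplicity one, I would then record the decomposition
\[
H^\bullet_!(\partial\rS_\Gamma,\tm_\lambda)=\bigoplus_{i=1,2}\ \bigoplus_{w\in\W^{\rP_i}_>}\ \bigoplus_{\psi\in\Sigma_{k_i(\lambda,w)}}V_{i,\psi,w},\qquad V_{i,\psi,w}=L^-_{i,\psi,w}\oplus L^+_{i,\psi,w},
\]
where $L^+_{i,\psi,w}:=H^{1+\ell(w)}_!(\rS^{\rM_i}_\Gamma,\tm_{w\cdot\lambda})(\pi^\psi_f)$ sits in cohomological degree $1+\ell(w)$ and $L^-_{i,\psi,w}:=H^{1+\ell(w')}_!(\rS^{\rM_i}_\Gamma,\tm_{w'\cdot\lambda})(\pi^\psi_f)$ in degree $1+\ell(w')$, each one-dimensional. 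Here I use that $a_i(\lambda,w)=a_i(\lambda,w')$ (Lemma~\ref{lem-involution}), so that the same eigenform $\psi$ indexes both lines, and that the three values of $a_i(\lambda,w)$ attained for $w\in\W^{\rP_i}_>$ are pairwise distinct for every admissible $\lambda$---a finite check against the Kostant tables---so that each Hecke eigensystem $\pi^\psi_f$ occurs in exactly one block.

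Next I would analyze the Poincar\'e pairing block by block. Since $\ell(w)+\ell(w')=\dim\rN_{\rP_i}=5$ with $\ell(w)\ge\ell(w')$, one has $\ell(w)\in\{3,4,5\}$ and $\ell(w')\in\{2,1,0\}$; thus $L^+$-lines occur only in degrees $4,5,6$ and $L^-$-lines only in degrees $1,2,3$, and the two kinds never share a cohomological degree. By Lemma~\ref{L-hec-dual}, and because $H^\bullet_!(\partial_1,\tm_\lambda)$ and $H^\bullet_!(\partial_2,\tm_\lambda)$ have disjoint Hecke spectra (the relevant $G_2(\A_f)$-constituents have cuspidal support on $\rM_i$, and $\rM_1\not\sim\rM_2$ since $\rP_1,\rP_2$ are non-associate), the pairing is block-diagonal: $\langle V_{i,\psi,w},V_{i',\psi',w''}\rangle=0$ unless $(i,\psi,w)=(i',\psi',w'')$, in which case it pairs $L^+_{i,\psi,w}$ with $L^-_{i,\psi,w}$, nondegenerately, the global pairing on $H^\bullet_!(\partial\rS_\Gamma,\tm_\lambda)$ being nondegenerate by Theorem~\ref{T-dual}. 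Moreover $L^+_{i,\psi,w}$ and $L^-_{i,\psi,w}$ are each isotropic (the self-pairing of a line would land in degree $2(1+\ell(w))\neq7$, resp.\ $2(1+\ell(w'))\neq7$, hence vanishes), and since $(1+\ell(w))(1+\ell(w'))\in\{12,10,6\}$ is even the restricted pairing on $V_{i,\psi,w}$ is symmetric; thus each $V_{i,\psi,w}$ is a split nondegenerate plane whose only maximal isotropic subspaces are the two coordinate lines $L^+_{i,\psi,w}$ and $L^-_{i,\psi,w}$.

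Finally, by Theorem~\ref{T-dual} the subspace $H^\bullet_{!,Eis}(\partial\rS_\Gamma,\tm_\lambda)$ is maximal isotropic in $H^\bullet_!(\partial\rS_\Gamma,\tm_\lambda)$; being graded and Hecke-stable, while the blocks are separated both by cohomological degree and by Hecke eigensystem, it decomposes as $\bigoplus_{i,\psi,w}\bigl(H^\bullet_{!,Eis}(\partial\rS_\Gamma,\tm_\lambda)\cap V_{i,\psi,w}\bigr)$, and each summand is again maximal isotropic in its block (else one could enlarge $H^\bullet_{!,Eis}$ by adjoining an isotropic line from one block, using orthogonality of the blocks). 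Hence each summand equals $L^+_{i,\psi,w}$ or $L^-_{i,\psi,w}$; regrouping according to $i$ yields the splitting $H^\bullet_{!,Eis}(\partial\rS_\Gamma,\tm_\lambda)=H^\bullet_{!,Eis}(\partial_1,\tm_\lambda)\oplus H^\bullet_{!,Eis}(\partial_2,\tm_\lambda)$ together with the asserted description of $H^\bullet_{!,Eis}(\partial_i,\tm_\lambda)(\pi^\psi,w)$. I expect the main obstacle to be the blockwise analysis of the pairing---above all the orthogonality of the $\partial_1$- and $\partial_2$-contributions and the nondegeneracy of the pairing restricted to each $V_{i,\psi,w}$---after which the passage from ``maximal isotropic'' to the precise dichotomy ``$L^+$ or $L^-$'' is elementary, provided one brings in the grading of $H^\bullet_{Eis}$.
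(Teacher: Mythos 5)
Your proof is correct and follows essentially the same route as the paper: decompose $H^\bullet_!(\partial_i,\tm_\lambda)$ into two-dimensional blocks indexed by $(\pi^\psi,w)$ with $w\in \W^{\rP_i}_>$, use Lemma~\ref{L-hec-dual} and Hecke equivariance to see that the Poincar\'e pairing is block-diagonal and nondegenerate on each block, and then invoke maximal isotropy (Theorem~\ref{T-dual}) to conclude that the Eisenstein part selects exactly one of the two lines $H^{1+\ell(w)}_!$ or $H^{1+\ell(w')}_!$ in each block. Your explicit verification that the maximal isotropic subspace actually respects the block decomposition (via the grading, the distinctness of the weights $a_i(\lambda,w)$ for $w\in\W^{\rP_i}_>$, and the non-association of $\rP_1$ and $\rP_2$) makes precise a step the paper leaves implicit, but it is the same argument.
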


\subsection{Eisenstein forms.}\label{sec:ecc}  Let  $\omega\in \Omega^*(\partial_i, \tm_{\lambda}) (i=0,1,2)$ be a differential form on the boundary component $\partial_i$ and $\widetilde{\omega}\in \Omega^*(\Gamma_{\rP_i}\backslash \rS, \tm_\lambda)$ be the pull-back of $\omega$ along the projection of $\Gamma_\rP\backslash \rS=\partial_i \times A_{\rP_i}$ to the first factor. For any $\theta\in \fb_i:=\fa_{\rP_i}^*\otimes \C$, set
$$\omega_{\theta}=\widetilde{\omega}\times a^{\theta+\rho_{i}}\in \Omega^*(\Gamma_{\rP_i}\backslash \rS, \tm_\lambda),$$
where $\rho_{i}=\rho\vert_{\fb_i}$
Then we define the corresponding Eisenstein form as
\begin{equation*}
  E(\omega, \theta)=\sum_{\gamma\in \Gamma/\Gamma_{\rP_i}} \omega_{\theta}\circ \gamma \in \Omega^*(\rS_\Gamma, \tm_\lambda).
\end{equation*}
It is well-known that the series $E(\omega, \theta)$ converges in certain region and admits an analytic continuation to a meromorphic function on $\fb_i$.

\subsection{Constant terms and intertwining operators.}

To proceed, we consider the representation theoretic reformulation of the Eisenstein forms. The complex of smooth forms $\Omega^*(\rS_\Gamma, \tm_{\lambda})$ can be computed as
\begin{align*}
 \Omega^*(\rS_\Gamma, \tm_{\lambda}) &= C^*(\fg, K_\infty; C^{\infty}(\Gamma\backslash G_2(\R))\otimes \m_\lambda) \\
   &= C^*(\fg, K_\infty; C^{\infty}(G_2(\Q)\backslash G_2(\A))\otimes \m_\lambda)^{K_f}.
\end{align*}
Consequently, we have
\begin{align*}
H^\bullet(\rS_\Gamma, \tm_\lambda)&=H^\bullet(\fg, K_\infty; C^{\infty}(G_2(\Q)\backslash G_2(\A))\otimes \m_\lambda)^{K_f}
\end{align*}
Now let $\rP_i$ be a standard parabolic, $\pi=\pi_{\infty}\otimes \pi_f$ be a cuspidal representation of $\rM_i(\A)$ and let $\theta\in \fb_i$ be a parameter. For $\psi_\theta\in V(\theta, \pi):=\Ind_{\rP_i}^{G_2}\pi\otimes \C^{\theta+\rho_i}$, where $\C^{\theta+\rho_i}$ denotes the one-dimensional representation of $\rP_i(\A)$ given by the character $\theta+\rho_i$.
Define the Eisenstein series as
\begin{equation*}
  E_{\rP_i}(\theta,\pi, \psi_\theta)(g)=\sum_{\gamma\in \rP_i(\Q)\backslash G_2(\Q)} \psi_\theta(\gamma g).
\end{equation*}
For $\theta$ from certain region, the Eisenstein series defined above converges absolutely, hence defines an intertwining operator
\begin{equation*}
  \Eis_{\theta}: V(\theta, \pi) \rightarrow C^\infty(G_2(\Q)\backslash G_2(\A)),
\end{equation*}
in this region.
The Eisenstein series  has a meromorphic continuation to $\fb_i$, hence defines a meromorphic continuation of the corresponding intertwining operator. When the Eisenstein series has a simple pole at $\theta$,  by taking the derivative, we get an intertwining operator
\begin{equation*}
  \Eis_{\theta}': V(\theta, \pi) \rightarrow C^\infty(G_2(\Q)\backslash G_2(\A)).
\end{equation*}
It is clear that, for a closed form $\omega \in \Omega^*(\partial_i, \tm_{\lambda})$ that represents certain cohomology class of type $(\pi, w)$,  the intertwining operator $\Eis$ is holomorphic (resp. have a simple pole) at $\theta$ if and only if the corresponding Eisenstein form $E(\omega, \theta)$ is holomorphic (resp. have a simple pole) at $\theta$.

To determine whether the Eisenstein series is holomorphic or not, it suffices to look at the constant terms.
First, we consider the case when the parabolic subgroup is maximal.  Let $\rQ=\rM_\rQ \rN_\rQ$ be another standard parabolic subgroup. Then the constant term along $\rQ$ is defined as
\begin{equation*}
  E_\rQ(\theta,\pi, \psi_\theta)(g)=\int_{\rN_\rQ(\Q)\backslash \rN_\rQ(\A)} E(\theta,\pi, \psi_\theta)(ng)dn.
\end{equation*}
Since the maximal parabolic subgroups of $G_2$ are self-conjugate, the constant term is non-trivial only when $\rQ=\rP_i$, where we have
\begin{equation*}
  E_{\rP_i}(\theta, \pi, \psi_\theta)(g)=\psi_\theta(g)+M(\theta, \pi, w_{\rP_i})\psi_\theta(g),
\end{equation*}
where $w_{\rP_i}$ is the longest element in $\W^{\rP_i}$ and $M(\theta, \pi, w_{\rP_i})$ is a global intertwining operator from $V( \theta, \pi)$ to $V(-\theta, \pi)$. The global intertwining operator $M(\theta, \pi, w_{\rP_i})$ is a product of local intertwining operator
$$M(\theta, \pi, w_{\rP_i})=A(\theta, \pi_{\infty}, w_{\rP_i})\otimes A(\theta, \pi_f, w_{\rP_i}), \text{ where } A(\theta, \pi_f, w_{\rP_i})=\otimes_p A(\theta, \pi_p, w_{\rP_i}).$$
Note that if $\pi_p$ is unramified, the induced representation $V(\theta, \pi_p)$ is also unramified.
Works of Langlands and Gindikin-Karpelevich, see, for example \cite{cogdell2004lectures}, give a description of the local intertwining operator on $G_2(\Z_p)$-invariant vectors. As a consequence, we have the following description of the global intertwining operator.
\begin{lema}$\,\,\,$
\begin{itemize}
  \item[(1)] Let $i=1$ and $\theta=z\gamma_2\in \fb_1$. Then,
  \begin{equation*}
    M(\theta, \pi, w_{\rP_1})= c_1(\theta, \pi)A(\theta, \pi_{\infty}, w_{\rP_1})\otimes A'(\theta, \pi_f, w_{\rP_1})
 \end{equation*}
  where $A'(\theta, \pi_f, w_{\rP_i})$ is the intertwining operator from $V(\theta, \pi_f):=\otimes_p V(\theta, \pi_p)$ to $V(-\theta, \pi_f)$ that sends a normalized $K$-invariant vector in $V(\theta, \pi_f)$ to a normalized $K$-invariant vector in $V(-\theta, \pi_f)$ and
    \begin{equation*}
    c_1(\theta, \pi)= \frac{L(z, \pi)}{L(z+1, \pi)} \frac{\zeta(2z)}{\zeta(2z+1)} \frac{L(3z, \pi)}{L(3z+1, \pi)}.
\end{equation*}

  \item[(2)] Let $i=2$ and $\theta=z\gamma_1 \in \fb_2$. Then,
  \begin{equation*}
    M(\theta, \pi, w_{\rP_2})= c_2(\theta, \pi)A(\theta, \pi_{\infty}, w_{\rP_2})\otimes A'(\theta, \pi_f, w_{\rP_2})
  \end{equation*}
  where $A'(\theta, \pi_f, w_{\rP_2})$ is the intertwining operator from $V(\theta, \pi_f)$ to $V(-\theta, \pi_f)$ that sends a normalized $K$-invariant vector in $V(\theta, \pi_f)$ to a normalized $K$-invariant vector in $V(-\theta, \pi_f)$ and
    \begin{equation*}
    c_2(\theta, \pi)= \frac{L(z, Sym^3\pi)}{L(z+1, Sym^3\pi)}  \frac{\zeta(2z)}{\zeta(2z+1)}.
\end{equation*}
\end{itemize}
\end{lema}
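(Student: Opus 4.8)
The plan is to factor the global intertwining operator $M(\theta,\pi,w_{\rP_i})$ into its local constituents, evaluate each on the spherical vector by the Gindikin--Karpelevich formula, and reduce the remaining input to the combinatorics of the root system of $G_2$. First I would use that the global operator is the restricted tensor product $M(\theta,\pi,w_{\rP_i})=\bigotimes_v A(\theta,\pi_v,w_{\rP_i})$ over all places $v$ of $\Q$, each local operator being defined by the usual integral over $\rN_{\rP_i}(\Q_v)$ and continued meromorphically. Setting aside the archimedean factor $A(\theta,\pi_\infty,w_{\rP_i})$, one is left with $\bigotimes_p A(\theta,\pi_p,w_{\rP_i})$; since $\pi_f$ is unramified, so is every $\pi_p$, and $V(\theta,\pi_p)$ carries a one-dimensional line of $G_2(\Z_p)$-fixed vectors. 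Writing $\phi^0_p\in V(\theta,\pi_p)$ and $\phi^{0,-}_p\in V(-\theta,\pi_p)$ for the normalized spherical vectors, the Gindikin--Karpelevich evaluation recorded in \cite{cogdell2004lectures} gives $A(\theta,\pi_p,w_{\rP_i})\phi^0_p=c_p(\theta,\pi_p)\,\phi^{0,-}_p$ with $c_p(\theta,\pi_p)$ a product of ratios of local $L$-factors; taking $A'(\theta,\pi_f,w_{\rP_i})$ to be the intertwining operator sending $\bigotimes_p\phi^0_p$ to $\bigotimes_p\phi^{0,-}_p$, this yields $\bigotimes_p A(\theta,\pi_p,w_{\rP_i})=\bigl(\prod_p c_p(\theta,\pi_p)\bigr)A'(\theta,\pi_f,w_{\rP_i})$, i.e.\ the asserted factorization with $c_i(\theta,\pi)=\prod_p c_p(\theta,\pi_p)$.

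It then remains to identify $\prod_p c_p$. By the Langlands--Gindikin--Karpelevich description, $c_p(\theta,\pi_p)=\prod_{j\ge 1}\frac{L_p(jz,\pi_p,r_j)}{L_p(jz+1,\pi_p,r_j)}$, where $r_1,r_2,\dots$ are the graded pieces of the adjoint action of the dual Levi $\rGL_2(\C)$ on the Lie algebra of the dual unipotent radical of $\rP_i$, and multiplying over all $p$ replaces each factor by the finite automorphic $L$-function $L(jz,\pi,r_j)$, which (as $\pi$ is unramified at every finite place) is the complete finite $L$-function of the underlying Hecke eigenform. The decisive point is to compute on the \emph{dual} side, where the self-duality of $G_2$ interchanges long and short roots: although the nilradical $\fu_1$ of $\rP_1$ is the five-dimensional Heisenberg algebra on which $\rM_1$ acts with graded pieces $Sym^3$ of the standard representation and the trivial representation, the corresponding nilradical on the Langlands dual side is the three-step algebra whose $\rGL_2(\C)$-graded pieces are the standard, the trivial, and the standard representation in degrees $1,2,3$; dually $\fu_2$ is three-step with $\rM_2$-graded pieces (standard, trivial, standard), so on the dual side it becomes the Heisenberg algebra with $\rGL_2(\C)$-graded pieces $Sym^3$ and trivial in degrees $1,2$. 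With the trivial piece contributing Riemann's $\zeta$ because the central character of $\pi$ --- an unramified Hecke character of $\Q$ --- is trivial, this gives
\[ c_1(\theta,\pi)=\frac{L(z,\pi)}{L(z+1,\pi)}\,\frac{\zeta(2z)}{\zeta(2z+1)}\,\frac{L(3z,\pi)}{L(3z+1,\pi)},\qquad c_2(\theta,\pi)=\frac{L(z,Sym^3\pi)}{L(z+1,Sym^3\pi)}\,\frac{\zeta(2z)}{\zeta(2z+1)}. \]

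The step I expect to be the main obstacle is precisely this dual-side bookkeeping together with the matching of normalizations: checking that the scalar $z$ in $\theta=z\gamma_2$ (resp.\ $z\gamma_1$) feeds into the arguments $jz$ of the $L$-functions exactly as written, with the shifts by $1$ in the right slots, requires computing $\rho_i=\rho|_{\fb_i}$ and the $\fa_{\rP_i}$-weights of the root spaces in $\fu_i$ explicitly, and keeping track of the long/short-root interchange is what makes $\rP_1$ produce $L(z,\pi)$ and $L(3z,\pi)$ while $\rP_2$ produces $L(z,Sym^3\pi)$. That no other constant term intervenes has already been noted (the maximal parabolics of $G_2$ are self-associate), so once these normalizations are pinned down the remainder is the routine unramified computation.
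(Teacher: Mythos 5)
Your proposal is correct and follows essentially the same route as the paper's proof: both factor $M(\theta,\pi,w_{\rP_i})$ into local intertwining operators, apply the Langlands--Gindikin--Karpelevich evaluation on spherical vectors as in \cite{cogdell2004lectures}, and read off $c_i(\theta,\pi)$ from the graded pieces of the adjoint action of the dual Levi $\rGL_2(\C)$ on ${}^L\fn_i$, with the long/short-root interchange explaining why $\rP_1$ yields the three factors $L(z,\pi)$, $\zeta(2z)$, $L(3z,\pi)$ while $\rP_2$ yields $L(z,Sym^3\pi)$, $\zeta(2z)$, and with the determinant pieces degenerating to $\zeta$ because the central character is trivial. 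Your write-up merely makes explicit the local-global bookkeeping that the paper leaves to the citation.
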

\begin{proof}
  Recall that the factor of the intertwining operator for $G_2$ is given by the action of the adjoint action of the L-group of the Levi component of the uniponent radical. For  maximal parabolic subgroup, the Levi is isomorphic to $\rGL_2$. Hence the L-group is isomorphic to $\rGL_2(\C)$.
  Let $V\cong \C^2$ be the standard representation of $\rGL_2(\C)$ of dimension $2$. As determined in \cite{cogdell2004lectures}, the adjoint action of $\rGL_2(\C)$ on ${}^L\n_1$ and the adjoint action of $\rGL_2(\C)$ on ${}^L\n_2$ decompose as
  \begin{equation*}
    {}^L\n_1=V\otimes \wedge^2V \oplus V \oplus \wedge^2V, \quad {}^L\n_2=Sym^3V\otimes(\wedge^2V)^{-1}\oplus \wedge^2V.
  \end{equation*}
Since $\pi$ is unramified,  the corresponding automorphic representation of $\wedge^2 \pi_f$ is trivial. Hence we have the factor
  \begin{equation*}
    c_1(z, \pi)= \frac{L(z, \pi)}{L(z+1, \pi)} \frac{\zeta(2z)}{\zeta(2z+1)} \frac{L(3z, \pi)}{L(3z+1, \pi)}.
\end{equation*}
    and the factor for $\rP_2$ is given by
  \begin{equation*}
    c_2(z, \pi)= \frac{L(z, Sym^3\pi)}{L(z+1, Sym^3\pi)}  \frac{\zeta(2z)}{\zeta(2z+1)}.
\end{equation*}
\end{proof}

Now let $i=0$. Here we consider the special case when $\pi=\C$ is the trivial representation. For $\theta=z_1\gamma_1+z_2\gamma_2\in \fb_0$, set $V(\theta)=V(\theta, \C)$. Then the constant term can be computed as
  \begin{equation*}
  E_{\rP_0}(\theta, \psi_\theta)(g)=\sum_{w\in W} M(\theta, w)(\psi_\theta),
\end{equation*}
where $M(\theta, w)$ denotes a global intertwining functor from $V(\theta)$ to $V(w\cdot \theta)$. Again, the global intertwining operator is a product of local intertwining operators
$$M(\theta,\pi, w)=A(\theta,\pi_\infty,  w)\otimes A(\theta, \pi_f,  w), \text{ where } A(\theta, \pi_f,  w)=\otimes_p A(\theta, \pi_p, w).$$
\begin{lema}\label{lem-minimal}
  Let $i=0$ and $\theta=z_1\gamma_1+z_2\gamma_2$. Then
  \begin{equation*}
    M(\theta, \pi, w)= c_0(\theta, w)A(\theta, \pi_\infty, w)\otimes A'(\theta, \pi_f, w)
  \end{equation*}
  where $A'(\theta, \pi_f,  w)$ is the intertwining operator from $V(\theta, \pi_f):=\otimes_p V(\theta, \pi_p)$ to $V(w\cdot\theta, \pi_f)$ that sends a normalized $K$-invariant vector in $V(\theta, \pi_f)$ to a normalized $K$-invariant vector in $V(w\cdot\theta, \pi_f)$ and
\begin{equation*}
  c_0(\theta, w)= \prod_{\substack{ \alpha\in \Phi^+ \\ w^{-1}\alpha\in -\Phi^+ }} \frac{\zeta(\langle \alpha , \gamma_1 \rangle (z_1+1)+ \langle \alpha , \gamma_2 \rangle (z_2+1)-1)}{\zeta(\langle \alpha , \gamma_1 \rangle (z_1+1)+ \langle \alpha , \gamma_2 \rangle (z_2+1))}.
\end{equation*}
\end{lema}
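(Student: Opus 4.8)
The plan is to follow the classical computation of the constant term of an Eisenstein series attached to the minimal parabolic, combined with the Gindikin--Karpelevich formula. By the general theory of Eisenstein series, the constant term of $E_{\rP_0}(\theta, \psi_\theta)$ along $\rP_0$ is the sum over the Weyl group $\sum_{w\in\W} M(\theta, w)\psi_\theta$, where $M(\theta, w)\colon V(\theta)\to V(w\cdot\theta)$ is the standard global intertwining operator; this is already recorded in the text preceding the statement. Since the inducing character $\theta+\rho_0$ of the torus factorizes as a restricted tensor product over all places and the trivial representation $\pi=\C$ is everywhere unramified, $M(\theta, w)$ factors as a restricted tensor product $A(\theta, \pi_\infty, w)\otimes\bigotimes_p A(\theta, \pi_p, w)$ of local intertwining operators, with the archimedean factor left untouched.

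For each finite prime $p$ the representation $V(\theta, \pi_p)$ is the unramified principal series of $G_2(\Q_p)$, hence contains a unique $G_2(\Z_p)$-fixed line. I would apply the Gindikin--Karpelevich formula in the form recorded in \cite{cogdell2004lectures}; for a general $w$ one reduces to the rank-one case using a reduced word for $w$ together with the cocycle relation $A(\theta, \pi_p, w_1 w_2) = A(w_2\cdot\theta, \pi_p, w_1)\circ A(\theta, \pi_p, w_2)$ valid whenever $\ell(w_1 w_2)=\ell(w_1)+\ell(w_2)$, and the rank-one contributions telescope into a single product over the inversion set $\{\alpha\in\Phi^+ : w^{-1}\alpha\in-\Phi^+\}$. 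This yields that the image of the normalized spherical vector of $V(\theta,\pi_p)$ under $A(\theta,\pi_p,w)$ equals $c_{0,p}(\theta, w)$ times the normalized spherical vector of $V(w\cdot\theta,\pi_p)$, where $c_{0,p}(\theta, w)=\prod_{\alpha}\frac{1-p^{-s_\alpha-1}}{1-p^{-s_\alpha}}$, the product running over the inversion set and $s_\alpha=\langle\alpha,\gamma_1\rangle(z_1+1)+\langle\alpha,\gamma_2\rangle(z_2+1)-1$; the shift by $\rho_0=\rho|_{\fb_0}$, together with $\rho=\gamma_1+\gamma_2$, is precisely what produces the $(z_i+1)$ in the exponent.

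Setting $A'(\theta,\pi_f,w)=\bigotimes_p c_{0,p}(\theta,w)^{-1}A(\theta,\pi_p,w)$, which by construction sends normalized spherical vectors to normalized spherical vectors, and taking the Euler product $\prod_p \frac{1-p^{-s_\alpha-1}}{1-p^{-s_\alpha}}=\frac{\zeta(s_\alpha)}{\zeta(s_\alpha+1)}$ for each $\alpha$ in the inversion set, one obtains $c_0(\theta,w)=\prod_{\alpha}\frac{\zeta(s_\alpha)}{\zeta(s_\alpha+1)}$ and hence the asserted identity $M(\theta,\pi,w)=c_0(\theta,w)A(\theta,\pi_\infty,w)\otimes A'(\theta,\pi_f,w)$. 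Convergence in a suitable chamber and the meromorphic continuation needed to legitimize the local-global factorization are standard and may be quoted directly from the theory of Eisenstein series.

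The main obstacle is bookkeeping rather than anything conceptual: one must match the normalization of the induction parameter $\theta+\rho_0$ with the pairing $\langle\cdot,\cdot\rangle$ used in the statement, keeping straight the distinction between a root $\alpha$ and its coroot in the non-simply-laced group $G_2$, so that the rank-one factors come out as $\frac{\zeta(s_\alpha)}{\zeta(s_\alpha+1)}$ with exactly the exponent written above; and one must verify that the telescoping of a reduced-word decomposition lands on the inversion set $\{\alpha\in\Phi^+:w^{-1}\alpha\in-\Phi^+\}$ independently of the chosen reduced word, which follows from the cocycle relation and the standard combinatorics of reduced expressions.
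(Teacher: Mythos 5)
Your proposal is correct and follows the same route the paper intends: the paper's own proof is just a pointer to Harder's references for the ``direct computation,'' and that computation is precisely your argument — factor $M(\theta,\pi,w)$ into local operators, apply the Gindikin--Karpelevich formula at each unramified place via the cocycle relation along a reduced word so the rank-one factors telescope over the inversion set, and take the Euler product to obtain the ratios of zeta functions in $c_0(\theta,w)$. The bookkeeping issues you flag (the $\rho$-shift producing the $(z_i+1)$, and the independence of the inversion set from the chosen reduced word) are exactly the points the cited references handle, so nothing is missing.
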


\begin{proof}
This follows from direct computation, for a quick reference see~\cite[p. 159]{Harder2012} and ~\cite[Section 1.2.4]{Harder90} for the details.
\end{proof}

\subsection{The inner part of the Eisenstein cohomology.} Now we are ready to determine the space $H^\bullet_{!, Eis}(\partial \rS_\Gamma, \tm_\lambda)$. We begin with the following lemma.
\begin{lema}\label{l-poles}
Let $\lambda=m_1\gamma_1+m_2\gamma_2$ and set $\theta_{\lambda, w}^i:=-w(\lambda+\rho)\vert_{\fb_i} (i=1,2)$.
  \begin{itemize}
    \item[(1)]  The constant term $c_1(\theta, \pi)$ has a simple pole at $\theta_{\lambda, w}^1$ if  $w=w_7$, $m_2=0$ and $L(1/2, \pi)$ is non-zero and is holomorphic at $\theta_{\lambda, w}^1$ otherwise.
    \item[(2)]  The constant term $c_2(\theta, \pi)$ has a simple pole at $\theta_{\lambda, w}^2$ if $w=w_6$, $m_1=0$ and $L(1/2, Sym^3\pi)$ is non-zero and is holomorphic at $\theta_{\lambda, w}^1$ otherwise.
  \end{itemize}
\end{lema}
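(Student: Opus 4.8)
The plan is to read the statement off the Gindikin--Karpelevich formulas for $c_1(\theta,\pi)$ and $c_2(\theta,\pi)$ proved in the preceding lemma, once the points $\theta^i_{\lambda,w}$ have been located explicitly. The analytic inputs I would invoke are standard: $\zeta(s)$ is holomorphic on $\C$ apart from a simple pole at $s=1$ and is non-vanishing on $\{\rRe(s)\ge 1\}$; for a cuspidal automorphic representation $\pi$ of $\rGL_2(\A)$ attached to a level-one Hecke eigenform the finite $L$-function $L(s,\pi)$ is entire and non-vanishing on $\{\rRe(s)\ge 1\}$; and, by Kim--Shahidi, $Sym^3\pi$ is a cuspidal automorphic representation of $\rGL_4(\A)$, so $L(s,Sym^3\pi)$ is likewise entire and non-vanishing on $\{\rRe(s)\ge 1\}$. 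I would stress that the archimedean local factor of the intertwining operator has been split off into $A(\theta,\pi_\infty,w_{\rP_i})$, so that $c_i(\theta,\pi)$ is literally the displayed ratio of finite $L$-functions; its only candidate poles are the pole of $\zeta(2z)$ at $z=\tfrac12$ and zeros of the denominator factors.

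Next I would compute the special points. Using $\rho=\gamma_1+\gamma_2$, $\kappa^{\rM_1}=\tfrac12\gamma_2$, $\kappa^{\rM_2}=\tfrac12\gamma_1$, together with $w\cdot\lambda=w(\lambda+\rho)-\rho=a_i(\lambda,w)\gamma^{\rM_i}+b_i(\lambda,w)\kappa^{\rM_i}$, and writing $\theta=z\gamma_2$ on $\fb_1$ and $\theta=z\gamma_1$ on $\fb_2$, one finds $\rho|_{\fb_1}=3\kappa^{\rM_1}$ and $\rho|_{\fb_2}=5\kappa^{\rM_2}$, hence $\theta^1_{\lambda,w}$ corresponds to $z=-(b_1(\lambda,w)+3)/2$ and $\theta^2_{\lambda,w}$ to $z=-(b_2(\lambda,w)+5)/2$. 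Feeding in the values of $b_i(\lambda,w)$ from the Kostant tables for the elements $w\in\W^{\rP_i}_{>}$ -- the ones at which the Eisenstein series is evaluated in the construction, cf.\ Proposition~\ref{prop-main}, namely $\{w_7,w_9,w_{11}\}$ for $\rP_1$ and $\{w_6,w_8,w_{10}\}$ for $\rP_2$ -- I obtain
\[
z_{w_7}=\tfrac{m_2+1}{2},\qquad z_{w_9}=\tfrac{m_1+m_2+2}{2},\qquad z_{w_{11}}=\tfrac{m_1+2m_2+3}{2},
\]
\[
z_{w_6}=\tfrac{m_1+1}{2},\qquad z_{w_8}=\tfrac{m_1+3m_2+4}{2},\qquad z_{w_{10}}=\tfrac{2m_1+3m_2+5}{2}.
\]

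The conclusion then splits into two cases. Since $m_1,m_2\ge 0$, for $w\in\{w_9,w_{11}\}$ (and for $w_8,w_{10}$) one has $z_w\ge 1$, and the same holds for $w_7$ if $m_2\ne 0$ and for $w_6$ if $m_1\ne 0$; in each of these cases every argument appearing in $c_i$ (namely $z$, $z+1$, $2z$, $2z+1$, and, for $i=1$, also $3z$, $3z+1$) lies in $\{\rRe(s)\ge 1\}$, with $2z\ge 2$ so that the pole of $\zeta$ is not met, whence every factor of $c_i$ is finite and non-zero and $c_i$ is holomorphic at $\theta^i_{\lambda,w}$. The only remaining point is $z=\tfrac12$, which occurs exactly for $w=w_7$ with $m_2=0$ (when $i=1$) and for $w=w_6$ with $m_1=0$ (when $i=2$). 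There I would rewrite
\[
c_1(z,\pi)=\zeta(2z)\cdot\frac{L(z,\pi)\,L(3z,\pi)}{L(z+1,\pi)\,L(3z+1,\pi)\,\zeta(2z+1)},
\]
observe that near $z=\tfrac12$ the second factor is holomorphic with value $L(1/2,\pi)/\bigl(L(5/2,\pi)\,\zeta(2)\bigr)$ (the two copies of $L(3/2,\pi)$ cancel and $L(5/2,\pi),\zeta(2)\ne 0$), and conclude that, as $\zeta(2z)$ has a simple pole at $z=\tfrac12$, $c_1$ has a simple pole there precisely when $L(1/2,\pi)\ne 0$ and is holomorphic otherwise (the zero of $L(z,\pi)$ at $z=\tfrac12$ cancelling the pole). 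The identical computation with $Sym^3\pi$ in place of $\pi$ and without the tripled argument gives $c_2(z,\pi)=\zeta(2z)\cdot L(z,Sym^3\pi)/\bigl(L(z+1,Sym^3\pi)\,\zeta(2z+1)\bigr)$, whose regular factor at $z=\tfrac12$ equals $L(1/2,Sym^3\pi)/\bigl(L(3/2,Sym^3\pi)\,\zeta(2)\bigr)$, yielding a simple pole there iff $L(1/2,Sym^3\pi)\ne 0$. This is exactly assertions (1) and (2).

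I expect the main obstacle is not a single computation but the bookkeeping: checking the correct set of $w$ (namely $\W^{\rP_i}_{>}$, so that one never lands on a point $z_w<0$ at which a trivial zero of $\zeta(2z+1)$ would create a spurious pole), keeping straight that the archimedean local intertwining factors sit outside $c_i$ and hence are irrelevant here, and correctly invoking the non-vanishing of $L(s,\pi)$ and $L(s,Sym^3\pi)$ on the line $\rRe(s)=1$ (so that the edge cases $z_w=1$ cause no trouble) together with the cuspidality of $Sym^3\pi$ that makes $L(s,Sym^3\pi)$ entire.
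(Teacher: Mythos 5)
Your proposal is correct and follows essentially the same route as the paper's own proof: restrict $-w(\lambda+\rho)$ to $\fb_i$ via the Kostant tables to locate the evaluation points, observe that the only possible singularity of $c_i$ comes from the pole of $\zeta(2z)$ at $z=\tfrac12$ (reached exactly for $w=w_7$, $m_2=0$, resp.\ $w=w_6$, $m_1=0$), and note that this pole survives precisely when $L(1/2,\pi)$ (resp.\ $L(1/2,Sym^3\pi)$, entire by Kim--Shahidi) does not vanish. Your version merely makes explicit what the paper leaves implicit — the numerical values of $z_w$ for $w\in\W^{\rP_i}_>$ and the cancellation of the $L(3/2,\pi)$ factors — so no substantive difference.
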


\begin{proof} As $\rho=\gamma^{M_1}+3\kappa^{M_1}$, we have
$$-w(\lambda+\rho)\vert_{\fb_1}=-\frac{1}{2}(t_i(w, \lambda)+3)\gamma_2.$$
Note that,  for $\theta=z\gamma_2$,
\begin{equation*}
    c_1(\theta, \pi)= \frac{L(z, \pi)}{L(z+1, \pi)} \frac{\zeta(2z)}{\zeta(2z+1)} \frac{L(3z, \pi)}{L(3z+1, \pi)}.
\end{equation*}
It is well-known that $L(z, \pi)$ appearing here are holomorphic and non-zero at $z$ when $\Re z>1$. Hence in view of Section 2.5, for $c_1(\theta, \pi)$ to have pole, it is necessary to have $w=w_7$, $m_2=0$. The possible pole comes from the simple pole of the zeta function at $2z=1$. But the simple pole may be canceled by a possible zero of $L(z, \pi)$ at $z=1/2$. This shows the (1).

As $\rho=\gamma^{M_2}+5\kappa^{M_2}$, we have
$-w(\lambda+\rho)\vert_{\fb_2}=-\frac{1}{2}(t_i(w, \lambda)+5)\gamma_2.$
Note that for $\theta=z\gamma_2$,
\begin{equation*}
    c_2(\theta, \pi)=\frac{L(z, Sym^3\pi)}{L(z+1, Sym^3\pi)}  \frac{\zeta(2z)}{\zeta(2z+1)}.
  \end{equation*}
Since the automorphic representations $\pi$ considered here are all unramified, the corresponding central character is trivial, hence $\pi$ is not monomial. Then, according to \cite{KS99}, the $L$-function $L(z, Sym^3\pi)$ is entire. Hence, in view of Section 2.5, for $c_2(\theta, \pi)$ to have pole it is necessary to have $w=w_6$, $m_1=0$. The possible pole comes from the simple pole of the zeta function at $2z=1$. But the simple pole may be canceled by a possible zero of $L(z, Sym^3\pi)$ at $z=1/2$. This shows the (2).
\end{proof}

We shall need the following theorem.
\begin{thm}\label{thm-maximal}
Let $i=1, 2$, $\pi$ be a cuspidal automorphic representation of $\rM_i(\A)$ and $w\in \W^{\rP_i}_>$. Let $\beta\in H^{1+\ell(w)}_!(\partial_i, \tm_\lambda)$ be a cohomology class of type $(\pi, w)$ and $\omega \in \Omega^*(\partial_i, \tm_{\lambda})$ be a closed harmonic form the represents $\beta$.
\begin{itemize}
  \item[(1)] If the Eisenstein series $E(\omega, \theta)$ is holomorphic at $\theta_{\lambda, w}^i$, then $E(\omega, \theta_{\lambda, w}^i)\in \Omega^*(\rS_\Gamma, \tm_\lambda)$ is a closed form such that the restriction of its cohomology class to the boundary $r([E(\omega, \theta_{\lambda, w}^i)])\in H^{1+\ell(w)}_!(\partial_i, \tm_\lambda)$ is non-trivial and of type $(\pi, w)$.
  \item[(2)] If the Eisenstein series $E(\omega, \theta)$ has a simple pole at $\theta_{\lambda, w}^i$, then the residue $E'(\omega, \theta_{\lambda, w}^i)\in \Omega^*(\rS_\Gamma, \tm_\lambda)$ is a closed form such that its restriction to the boundary $r([E'(\omega, \theta_{\lambda, w}^i)])\in H^{1+\ell(w')}_!(\partial_i, \tm_\lambda)$ is non-trivial and of type $(\pi, w')$.
\end{itemize}
\end{thm}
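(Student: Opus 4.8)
The plan is to transfer the problem into the language of $(\fg,K_\infty)$-cohomology and intertwining operators, following Harder~\cite{Harder2012}. First I would realise the Eisenstein form $E(\omega,\theta)$ through the operator $\Eis_{\theta}\colon V(\theta,\pi)\to C^{\infty}(G_2(\Q)\backslash G_2(\A))$ of Section~\ref{sec:ecc}, which on the region of convergence is a morphism of $(\fg,K_\infty)$-complexes and extends meromorphically. Since $\omega$ is harmonic of type $(\pi,w)$, Kostant's theorem together with the standard description of the $(\fg,K_\infty)$-cohomology of induced modules (cf.~\cite{BoWa}) identifies $\omega$, at the point $\theta_{\lambda,w}^{i}=-w(\lambda+\rho)\vert_{\fb_i}$, with a $(\fg,K_\infty)$-cocycle of degree $1+\ell(w)$ valued in $V(\theta_{\lambda,w}^{i},\pi)\otimes\m_\lambda$. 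Here Lemma~\ref{lem-involution} is exactly what makes $\theta_{\lambda,w}^{i}$ the correct evaluation point, and what places both $\m_{w\cdot\lambda}$ and $\m_{w'\cdot\lambda}$ among the pieces of $H^\bullet(\fn_i,\m_\lambda)$ that occur there.

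For part~(1): since $d$ commutes with the Eisenstein summation, $dE(\omega,\theta)$ is the Eisenstein series attached to $d\omega_\theta$, and $d\omega_\theta$ vanishes to first order at $\theta_{\lambda,w}^{i}$ (this is precisely the condition that the twist $a^{\theta+\rho_i}$ matches the weight of $\m_\lambda$ along $\fa_{\rP_i}$); as $E(\omega,\theta)$ is holomorphic at $\theta_{\lambda,w}^{i}$ by hypothesis, it follows that $dE(\omega,\theta_{\lambda,w}^{i})=0$, so $E(\omega,\theta_{\lambda,w}^{i})$ is a closed $\tm_\lambda$-valued form. To compute $r\bigl([E(\omega,\theta_{\lambda,w}^{i})]\bigr)$ I would use the constant term along the self-associate maximal parabolic, $E_{\rP_i}(\theta,\pi,\psi_\theta)=\psi_\theta+M(\theta,\pi,w_{\rP_i})\psi_\theta$. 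The first summand restricts to $\omega$ itself, so the restriction equals $\beta$ up to a non-zero scalar, hence is non-trivial and of type $(\pi,w)$; the second summand takes values in $V(-\theta_{\lambda,w}^{i},\pi)$ and therefore contributes, in degree $1+\ell(w)$, only to the $w'$-summand $H^{1+\ell(w)-\ell(w')}(\rS^{\rM_i}_\Gamma,\tm_{w'\cdot\lambda})$, which vanishes because $\rS^{\rM_i}_\Gamma$ has cohomological dimension $1$ while $\ell(w)-\ell(w')\ge1$.

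For part~(2), write the Laurent expansion $E(\omega,\theta)=(\theta-\theta_{\lambda,w}^{i})^{-1}E_{-1}+E_{0}+\cdots$ and put $E'(\omega,\theta_{\lambda,w}^{i}):=E_{-1}$. Since $d\omega_\theta=(\theta-\theta_{\lambda,w}^{i})\,\eta_\theta+\cdots$, comparing the principal parts in the identity $dE(\omega,\theta)=$ (Eisenstein series of $d\omega_\theta$) gives $dE_{-1}=0$, so the residue is closed. Its constant term along $\rP_i$ is the residue of $\psi_\theta+M(\theta,\pi,w_{\rP_i})\psi_\theta$; the summand $\psi_\theta$ is holomorphic and drops out, while $M=c_i(\theta,\pi)\,A(\theta,\pi_\infty,w_{\rP_i})\otimes A'(\theta,\pi_f,w_{\rP_i})$ has a simple pole coming from $c_i$. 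By Lemma~\ref{l-poles} together with the hypothesis $L(1/2,\pi)\ne0$ (resp. $L(1/2,Sym^3\pi)\ne0$) the scalar $\Res_{\theta_{\lambda,w}^{i}}c_i(\theta,\pi)$ is non-zero, $A'$ sends normalised vectors to normalised vectors, and the resulting class is valued in $V(-\theta_{\lambda,w}^{i},\pi)$, i.e. it is of type $(\pi,w')$ and lies in $H^{1+\ell(w')}_!(\partial_i,\tm_\lambda)$; its non-triviality then follows from the non-vanishing of each of these factors.

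The main obstacle is the behaviour of the archimedean intertwining operator $A(\theta,\pi_\infty,w_{\rP_i})$ at the cohomological point $\theta_{\lambda,w}^{i}$: one must check that it is holomorphic and non-zero there — so that the simple pole of the global operator, furnished by $c_i$, really propagates to $E(\omega,\theta)$ rather than being neutralised at the infinite place — and that it induces a non-zero (in fact bijective) map on the relevant $(\fg,K_\infty)$-cohomology, which is what fixes the degree $1+\ell(w')$ of the residual class and its non-triviality. This reduces to an explicit analysis of the representations $V(\theta,\pi_\infty)$ of $G_2(\R)$ induced from discrete series of the $\rGL_2$-Levi at these particular parameters, in the spirit of the local computations in~\cite{Sch86} and~\cite{Harder2012}; this is where the real work lies. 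The remaining ingredients — meromorphic continuation, the precise pole order, and moderate growth of $E(\omega,\theta)$ near $\theta_{\lambda,w}^{i}$, so that its value (resp. residue) genuinely represents a class in $H^\bullet(\rS_\Gamma,\tm_\lambda)$ — are supplied by the standard theory of Eisenstein series recalled in Section~\ref{sec:ecc}.
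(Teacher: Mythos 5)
Your overall route --- realising $E(\omega,\theta)$ through the operator $\Eis_{\theta}$ on $V(\theta,\pi)$, reading off the boundary restriction from the constant term $\psi_\theta+M(\theta,\pi,w_{\rP_i})\psi_\theta$, and reducing everything to the local factors of $M$ --- is exactly the Harder--Schwermer argument that the paper invokes: its own proof of Theorem~\ref{thm-maximal} is a two-line citation to \cite{Harder75} and \cite{Sch86}, so your sketch is, if anything, more explicit than the printed proof. Your part~(1) is sound: the cohomological-dimension argument killing the $(\pi,w')$-contribution in degree $1+\ell(w)$ is the right reason the restriction is purely of type $(\pi,w)$. You also correctly isolate the archimedean input (holomorphy and non-vanishing of $A(\theta,\pi_\infty,w_{\rP_i})$ on $(\fg,K_\infty)$-cohomology at $\theta^i_{\lambda,w}$) as the point that actually carries the weight; the paper discharges it by citation and you leave it open, so that portion of your write-up remains a sketch rather than a proof.

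There is, however, a genuine internal problem in your part~(2). The Laurent coefficient $E_{-1}$ of a family of $(1+\ell(w))$-forms is again a $(1+\ell(w))$-form, so its restriction to $\partial_i$ lives in $H^{1+\ell(w)}(\partial_i,\tm_\lambda)$, not in $H^{1+\ell(w')}(\partial_i,\tm_\lambda)$. Its constant term is valued in $V(-\theta^i_{\lambda,w},\pi)$ and hence contributes only to the $w'$-Kostant summand, which in degree $1+\ell(w)$ is $H^{1+\ell(w)-\ell(w')}(\rS^{\rM_i}_\Gamma,\tm_{w'\cdot\lambda})$ with $\ell(w)-\ell(w')\geq 1$ --- precisely the group you showed vanishes in part~(1). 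So the literal residue of your form has trivial boundary restriction, and as written your argument produces nothing in part~(2). The degree drop from $1+\ell(w)$ to $1+\ell(w')$ must come from elsewhere: the residue of the operator $\Eis_{\theta}$ at $\theta^i_{\lambda,w}$ factors through the Langlands quotient of $V(\theta^i_{\lambda,w},\pi)$, whose $(\fg,K_\infty)$-cohomology tensored with $\m_\lambda$ contains a class in degree $1+\ell(w')$, and it is the image of \emph{that} lower-degree class under the residual intertwining operator whose boundary restriction is the non-trivial class of type $(\pi,w')$. This is exactly how the paper argues in the analogous minimal-parabolic situation (Proposition~\ref{l-last1}), and what \cite{Harder75} does in rank one. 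You should replace ``take the residue of the form $E(\omega,\theta)$'' by ``take the residue of the Eisenstein operator and evaluate it on the degree-$(1+\ell(w'))$ cohomology of the induced module''; the non-vanishing statements you list (residue of $c_i$, the finite operator $A'$, and the archimedean operator on cohomology) are then the correct remaining inputs.
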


\begin{proof}
  When the group $G$ is of $\Q$-rank $1$, the corresponding statement are proved in~\cite{Harder75}. The same proof works when the cohomology classes come from the maximal boundary, hence works for this theorem as well. See also \cite{Sch86}.
\end{proof}

The subspace of $H^{\bullet}_!(\partial_i, \tm_\lambda)$ spanned by the cohomology classes of form $r([E(\omega,\theta_{\lambda, w}^i)])$ appearing in (1) (resp. $r([E'(\omega, \theta_{\lambda, w}^i)])$ appearing in (2)) is denoted as $H^{\bullet}_{reg}(\partial_i, \tm_\lambda)$ (resp, $H^{\bullet}_{res}(\partial_i, \tm_\lambda)$).
Then we have the natural inculsion
\begin{equation}\label{e-inc-1}
  \bigoplus_{i=1,2} \left(H^{\bullet}_{reg}(\partial_i, \tm_\lambda) \oplus   H^{\bullet}_{res}(\partial_i, \tm_\lambda)\right)\subset H^\bullet_{!, Eis}(\partial \rS_\Gamma, \tm_\lambda).
\end{equation}
By combining Proposition \ref{prop-main}, Lemma \ref{l-poles} and Theorem \ref{thm-maximal}, the above inclusion \eqref{e-inc-1} is indeed an equality.
Moreover,  we conclude that
\begin{prop}\label{l-last2}
Let $\lambda=m_1\gamma_1+m_2 \gamma_2$, $w\in \W^{\rP_i}_> (i=1,2)$ and $\psi\in \Sigma_{k_i(\lambda, w)}$. Then
  \begin{itemize}
    \item[(1)]
    \begin{equation*}
H^\bullet_{!,Eis}(\partial_i, \tm_\lambda)(\pi^\psi_f, w) =\left\{\begin{array}{cccc}
&H^{1+\ell(w')}_{!}(\rS^{\rM_1}_\Gamma, \tm_{w'\cdot \lambda})(\pi^\psi_f) \quad \mr{if}\  w=w_7, m_2=0, \mr{ and }\ L(1/2, \pi)\ne 0,  \\
&\\
&H^{1+\ell(w)}_{!}(\rS^{\rM_1}_\Gamma, \tm_{w\cdot \lambda})(\pi^\psi_f)  \quad \mr{otherwise}.\\
\end{array}\right .
\end{equation*}

\item[(2)]
    \begin{equation*}
H^\bullet_{!,Eis}(\partial_i, \tm_\lambda)(\pi^\psi_f, w) =\left\{\begin{array}{cccc}
&H^{1+\ell(w')}_{!}(\rS^{\rM_2}_\Gamma, \tm_{w'\cdot \lambda})(\pi^\psi_f) \quad \mr{if}\  w=w_6,  m_1=0, \mr{ and }\ L(1/2, Sym^3\pi)\ne 0, \\
&\\
&H^{1+\ell(w)}_{!}(\rS^{\rM_2}_\Gamma, \tm_{w\cdot \lambda})(\pi^\psi_f)  \quad \mr{otherwise}.\\
\end{array}\right .
\end{equation*}
  \end{itemize}
\end{prop}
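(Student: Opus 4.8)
The plan is to promote Proposition~\ref{prop-main}, which already pins $H^\bullet_{!,Eis}(\partial_i, \tm_\lambda)(\pi^\psi_f, w)$ down to \emph{one of} the two lines $H^{1+\ell(w')}_{!}(\rS^{\rM_i}_\Gamma, \tm_{w'\cdot \lambda})(\pi^\psi_f)$ and $H^{1+\ell(w)}_{!}(\rS^{\rM_i}_\Gamma, \tm_{w\cdot \lambda})(\pi^\psi_f)$, into a statement telling \emph{which} of the two it is. Since $\ell(w)+\ell(w')=\dim\rN_{\rP_i}=5$ is odd we have $w\ne w'$, so these lines are genuinely distinct; each is one-dimensional by multiplicity one, and their direct sum $H^\bullet_{!}(\partial_i, \tm_\lambda)(\pi^\psi_f, w)$ is a hyperbolic plane for the Poincar\'e pairing, the two lines being its isotropic lines (Lemma~\ref{L-hec-dual} shows each line is isotropic, since $2\ell(w)\ne5$, and Theorem~\ref{T-dual} shows the pairing restricts to a perfect pairing between them). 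Hence it suffices to exhibit \emph{one} nonzero Eisenstein cohomology class lying in the predicted line: by Theorem~\ref{T-dual} the Eisenstein cohomology of this piece is maximal isotropic, hence is exactly that line.

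To produce such a class, fix $i\in\{1,2\}$, $w\in\W^{\rP_i}_>$ and $\psi\in\Sigma_{k_i(\lambda,w)}$; put $\pi=\pi^\psi$, and let $\omega\in\Omega^*(\partial_i,\tm_\lambda)$ be a closed harmonic form representing a nonzero class of type $(\pi,w)$ in $H^{1+\ell(w)}_!(\partial_i,\tm_\lambda)$. Everything hinges on whether the Eisenstein series $E(\omega,\theta)$ is holomorphic or has a simple pole at the cohomological point $\theta^i_{\lambda,w}=-w(\lambda+\rho)|_{\fb_i}$. As recalled in the discussion of constant terms above, this is detected by the constant term of $E(\omega,\theta)$ along $\rP_i$, which by self-conjugacy of the maximal parabolics equals $\psi_\theta+M(\theta,\pi,w_{\rP_i})\psi_\theta$; the first summand is holomorphic in $\theta$, and in the factorization $M(\theta,\pi,w_{\rP_i})=c_i(\theta,\pi)\,A(\theta,\pi_\infty,w_{\rP_i})\otimes A'(\theta,\pi_f,w_{\rP_i})$ the normalized spherical operator $A'(\theta,\pi_f,w_{\rP_i})$ acts as the identity on the unramified vector, while the archimedean operator $A(\theta,\pi_\infty,w_{\rP_i})$ is holomorphic and nonzero on the relevant $K_\infty$-type at $\theta^i_{\lambda,w}$ because $w\in\W^{\rP_i}_>$ places $\theta^i_{\lambda,w}$ in the dominant range. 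Therefore $E(\omega,\theta)$ is holomorphic, resp. has a simple pole, at $\theta^i_{\lambda,w}$ exactly when $c_i(\theta,\pi)$ is.

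Now invoke Lemma~\ref{l-poles}: for $i=1$, $c_1(\theta,\pi)$ has a simple pole at $\theta^1_{\lambda,w}$ precisely when $w=w_7$, $m_2=0$ and $L(1/2,\pi)\ne 0$, and is holomorphic otherwise; for $i=2$, $c_2(\theta,\pi)$ has a simple pole at $\theta^2_{\lambda,w}$ precisely when $w=w_6$, $m_1=0$ and $L(1/2,\mathrm{Sym}^3\pi)\ne 0$, and is holomorphic otherwise. In the holomorphic case Theorem~\ref{thm-maximal}(1) gives a closed form $E(\omega,\theta^i_{\lambda,w})$ on $\rS_\Gamma$ whose restriction to the boundary is nonzero and of type $(\pi,w)$; by the reduction above this forces $H^\bullet_{!,Eis}(\partial_i,\tm_\lambda)(\pi^\psi_f,w)=H^{1+\ell(w)}_{!}(\rS^{\rM_i}_\Gamma,\tm_{w\cdot\lambda})(\pi^\psi_f)$. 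In the polar case Theorem~\ref{thm-maximal}(2) gives, via the residue $E'(\omega,\theta^i_{\lambda,w})$, a nonzero boundary class of type $(\pi,w')$, forcing $H^\bullet_{!,Eis}(\partial_i,\tm_\lambda)(\pi^\psi_f,w)=H^{1+\ell(w')}_{!}(\rS^{\rM_i}_\Gamma,\tm_{w'\cdot\lambda})(\pi^\psi_f)$. Specializing $(i,w)$ to $(1,w_7)$ and to $(2,w_6)$ recovers the two dichotomies (1) and (2) as stated (here $w_7'=w_5$ and $w_6'=w_4$).

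The step I expect to be the crux is the middle one: verifying that at the cohomological point $\theta^i_{\lambda,w}$, with $w\in\W^{\rP_i}_>$, the \emph{only} source of a pole, or of a vanishing value, in the constant term is the scalar factor $c_i(\theta,\pi)$ --- that is, that the archimedean and normalized spherical intertwining operators contribute nothing extra on the cohomologically relevant vector, and that the resulting value, or residue, genuinely survives restriction to the boundary. For $\Q$-rank one groups this is precisely Harder's analysis, and since our classes are induced from the maximal parabolics $\rP_1,\rP_2$ the same argument applies; it is already folded into Theorem~\ref{thm-maximal}. What one must still confirm by hand is that $w\in\W^{\rP_i}_>$ does put $\theta^i_{\lambda,w}$ on the correct side of the relevant chamber, which is exactly the reason the regrouping by $\W^{\rP_i}_>$ was introduced in Section~\ref{sec:poincare}.
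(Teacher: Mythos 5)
Your proposal is correct and follows essentially the same route as the paper: reduce via Proposition~\ref{prop-main} to deciding which of the two isotropic lines occurs, then use Lemma~\ref{l-poles} together with Theorem~\ref{thm-maximal} to exhibit a nonzero Eisenstein class of type $(\pi,w)$ in the holomorphic case and of type $(\pi,w')$ in the residual case. The extra detail you supply on the constant term (that only the scalar $c_i(\theta,\pi)$ can produce a pole or zero on the cohomological vector) is exactly what the paper delegates to Theorem~\ref{thm-maximal} and its references, so nothing is missing.
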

Note that as the inculsion \eqref{e-inc-1} is an equality, in case $H^\bullet(\partial \rS_\Gamma, \tm_\lambda)=H^\bullet_!(\partial \rS_\Gamma, \tm_\lambda)$, we have already determined $H^\bullet_{Eis}(\rS_\Gamma, \tm_\lambda)$. Hence we are left to treat the cases 1, 3, 5 and 7 of Theorem~\ref{bdg2}.

\subsection{The boundary part of the Eisenstein cohomology.} In this section, we determine the Eisenstein cohomology classes that come from the minimal boundary $\partial_0$. As a consequence, we determine $H^\bullet_{Eis}(\rS_\Gamma, \tm_\lambda)$ for all the cases left. Throughout this subsection, we assume that $H^\bullet(\partial \rS_\Gamma, \tm_\lambda)\ne H^\bullet_!(\partial \rS_\Gamma, \tm_\lambda)$, or equivalently,  we are considering the cases 1, 3, 5 and 7 of Theorem~\ref{bdg2}.

Let $\beta$ be a cohomology class in $H^6(\widetilde{\partial_0}, \tm_\lambda)$\footnote{Here $\widetilde{\partial_0}$ denotes the cover of $\partial_0$, which is easily seen to be isomorphic to the unipotent radical $\rN$.},  and $\omega \in \Omega^6(\widetilde{\partial_0}, \tm_{\lambda})$ be a closed harmonic form that represents $\beta$. Recall that, as a $\rT$-module, we have
\[H^6(\widetilde{\partial_0}, \tm_\lambda)=H^0(\widetilde{\rS_\Gamma^{\rM_0}}, H^6(\rN, \tm_\lambda))=\C^{-\lambda-2\rho}.\]
The overall idea for construction of Eisenstein cohomology classes is the same as before. If the Eisenstein form $E(\omega, \theta)$ is holomorphic at $\theta_{\lambda}:=-\lambda-\rho$, then $E(\omega, \theta_\lambda)\in \Omega^6(\rS_\Gamma, \tm_\lambda)$ is a closed form such that the restriction of its cohomology class to the boundary is non-trivial, see~\cite[Theorem 7.2]{Sch94}. Otherwise, we need to take residues of the Eisenstein form and compute their restriction to the boundary using the constant term. As before, we denote the subspace of $H^{\bullet}(\partial \rS_\Gamma, \tm_\lambda)$ spanned the Eisenstein cohomology classes that comes from restriction of Eisenstein form (resp. residues of the Eisenstein form) as $H^{\bullet}_{B, reg}(\partial \rS_\Gamma, \tm_\lambda)$ (resp, $H^{\bullet}_{B, res}(\partial \rS_\Gamma, \tm_\lambda)$).

 For simplicity, set
\[H_{B, Eis}^{\bullet}(\partial \rS_\Gamma, \tm_\lambda)= H^{\bullet}_{B, reg}(\partial \rS_\Gamma, \tm_\lambda)\oplus H^{\bullet}_{B, res}(\partial \rS_\Gamma, \tm_\lambda).\]
Then we have the natrual inclusion
\begin{equation}\label{e-inc-2}
  H_{B, Eis}^\bullet(\partial \rS_\Gamma, \tm_\lambda) \oplus H^\bullet_{!, Eis}(\partial \rS_\Gamma, \tm_\lambda) \subset H^\bullet_{Eis}(\partial \rS_\Gamma, \tm_\lambda).
\end{equation}
According to Theorem \ref{T-dual}, the Eisenstein cohomology $H^\bullet_{Eis}(\partial \rS_\Gamma, \tm_\lambda)$ is a maximal isotropic subspace of boundary cohomology under the Poincar\'e duality. In particular, \[\dim H^\bullet_{Eis}(\partial \rS_\Gamma, \tm_\lambda)=\frac{1}{2} \dim H^\bullet(\partial \rS_\Gamma, \tm_\lambda).\]
On the other hand, according to Theorem \ref{bdg2}, we always have
\[\dim H^\bullet(\partial \rS_\Gamma, \tm_\lambda)- \dim H^\bullet_!(\partial \rS_\Gamma, \tm_\lambda)= 2,\]
for the cases studied in this subsection. Hence to determine $H^\bullet_{Eis}(\partial \rS_\Gamma, \tm_\lambda)$, it suffices to fill $H^\bullet_{B, Eis}(\partial \rS_\Gamma, \tm_\lambda)$ with one non-trivial cohomology class.

The following proposition is the main result of this subsection
\begin{prop}\label{l-last1}
Let notations be as in Theorem \ref{bdg2}.
\begin{itemize}
\item[(1)] In case 1, we have $H_{Eis}^{\bullet}(\partial \rS_\Gamma, \tm_\lambda)=H_{B, res}^{0}(\partial \rS_\Gamma, \tm_\lambda)=\Q$.
  \item[(2)] In case 3 and 7, we have $H_{B, Eis}^\bullet(\partial \rS_\Gamma, \tm_\lambda)=H^{5}_{B, res}(\partial \rS_\Gamma, \tm_\lambda)=\Q$.
  \item[(3)] In case 5, we have $H_{B, Eis}^\bullet(\partial \rS_\Gamma, \tm_\lambda)=H^{6}_{B, reg}(\partial \rS_\Gamma, \tm_\lambda)=\Q$.
\end{itemize}
\end{prop}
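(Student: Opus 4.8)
The plan is to construct, in each of the four cases $1,3,5,7$, one explicit nonzero class in $H^\bullet_{B,Eis}(\partial \rS_\Gamma, \tm_\lambda)$, living in the cohomological degree recorded in the statement, and then to close the argument by a dimension count. For the latter, Theorem~\ref{T-dual} gives $\dim H^\bullet_{Eis}(\partial \rS_\Gamma, \tm_\lambda)=\tfrac12\dim H^\bullet(\partial \rS_\Gamma, \tm_\lambda)$, while Theorem~\ref{bdg2} shows $\dim H^\bullet(\partial \rS_\Gamma, \tm_\lambda)-\dim H^\bullet_!(\partial \rS_\Gamma, \tm_\lambda)=2$ in precisely these four cases; since $H^\bullet_{!,Eis}(\partial \rS_\Gamma, \tm_\lambda)$ is already pinned down by Propositions~\ref{prop-main} and~\ref{l-last2}, with dimension $\tfrac12\dim H^\bullet_!(\partial \rS_\Gamma, \tm_\lambda)$, the space $H^\bullet_{B,Eis}(\partial \rS_\Gamma, \tm_\lambda)$ is forced to be exactly one-dimensional. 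Hence once a single nonzero class is produced and its degree located, the inclusion \eqref{e-inc-2} becomes an equality and the proposition follows; the class is automatically disjoint from the inner part, since its nontrivial restriction lives in the $\partial_0$-column of the Mayer--Vietoris sequence \eqref{LongExactSequance}, whereas the inner Eisenstein classes are supported on $\partial_1,\partial_2$.

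To produce the class I would use the mechanism set up just above the statement: take a harmonic representative $\omega\in\Omega^6(\widetilde{\partial_0},\tm_\lambda)$ of a generator of the line $H^6(\widetilde{\partial_0},\tm_\lambda)=\C^{-\lambda-2\rho}$, form the Eisenstein form $E(\omega,\theta)$, $\theta\in\fb_0$, of Section~\ref{sec:ecc}, and study its behaviour at $\theta_\lambda=-\lambda-\rho$. The analytic input is the constant term along $\rP_0$, which by Lemma~\ref{lem-minimal} equals $\sum_{w\in\W}M(\theta,w)\psi_\theta$ with $M(\theta,w)$ proportional, on unramified vectors, to the scalar $c_0(\theta,w)$, a product over the roots $\alpha\in\Phi^+$ with $w^{-1}\alpha\in-\Phi^+$ of ratios $\zeta(s_\alpha(\theta)-1)/\zeta(s_\alpha(\theta))$, where $s_\alpha(\theta)=\langle\alpha,\gamma_1\rangle(z_1+1)+\langle\alpha,\gamma_2\rangle(z_2+1)$. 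Since $\rho=\gamma_1+\gamma_2$, at $\theta_\lambda$ one has $z_i+1=-m_i$, so $s_\alpha(\theta_\lambda)=-(\langle\alpha,\gamma_1\rangle m_1+\langle\alpha,\gamma_2\rangle m_2)\le 0$; in particular the argument $s_\alpha(\theta_\lambda)-1$ never hits the pole of $\zeta$ at $1$, so the only possible singularities of the individual $c_0(\cdot,w)$ at $\theta_\lambda$ come from trivial zeros of $\zeta$ in the denominators, and the crux is to decide whether, after summing over $w$ and taking into account the archimedean factors $A(\theta,\pi_\infty,w)$, $E(\omega,\theta)$ is holomorphic at $\theta_\lambda$.

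In Case $5$ ($m_1,m_2$ both even and nonzero) I expect these singularities to cancel, so that $E(\omega,\theta_\lambda)\in\Omega^6(\rS_\Gamma,\tm_\lambda)$ is a well-defined closed form; its restriction to $\partial_0$ is read off from the constant term, where the term $w=1$ (with $M(\theta_\lambda,1)=\mathrm{id}$) contributes exactly $\beta$, which is nonzero in $H^6(\partial_0,\tm_\lambda)$ precisely because $m_1+2$ and $m_2+2$ are even (Lemma~\ref{parity0}), so $r([E(\omega,\theta_\lambda)])$ is a nonzero class in $H^6_{B,reg}(\partial \rS_\Gamma,\tm_\lambda)$. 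In Cases $1,3,7$, $E(\omega,\theta)$ has a genuine pole at $\theta_\lambda$; I would take the residue along the relevant singular hyperplane(s), obtaining a residual automorphic form whose $(\fg,K_\infty)$-cohomology produces a closed form on $\rS_\Gamma$ whose restriction to $\partial \rS_\Gamma$, computed from the residue of the constant term, is nonzero --- landing in degree $5$ in Cases $3$ and $7$, and, for the iterated residue at $\theta_0=-\rho$ in Case $1$, being a multiple of the constant function, hence the generator of $H^0(\partial \rS_\Gamma,\tm_\lambda)=\Q$ (here one may also shortcut: $H^7(\rS_\Gamma,\tm_\lambda)=0$ by the virtual cohomological dimension bound, which forces $H^7_{Eis}=0$, while the constant function gives $H^0_{Eis}=\Q$). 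In every case, combining this single class with the dimension count of the first paragraph yields the asserted equalities.

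The main obstacle is exactly this singularity analysis of the minimal-parabolic constant term at $\theta_\lambda$: showing that the twelve summands $M(\theta,w)\psi_\theta$ assemble into a holomorphic family precisely in Case $5$, while leaving a genuine pole in Cases $1,3,7$, and --- more delicate --- identifying the $(\fg,K_\infty)$-cohomological degree in which the residual form (respectively the holomorphic value $E(\omega,\theta_\lambda)$) has nonzero restriction to the boundary. This is what forces degree $6$ in Case $5$, degree $5$ in Cases $3$ and $7$, and degree $0$ in Case $1$; the computation will rely on the description of the degenerate principal series $V(\theta_\lambda)$ and of the residual representations obtained from it, in the spirit of~\cite{Harder2012} and~\cite{Sch94}.
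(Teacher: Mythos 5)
Your skeleton is the paper's: the dimension count via Theorem~\ref{T-dual} (together with $\dim H^\bullet(\partial\rS_\Gamma,\tm_\lambda)-\dim H^\bullet_!(\partial\rS_\Gamma,\tm_\lambda)=2$) reduces everything to exhibiting a single nonzero class in $H^\bullet_{B,Eis}$ and locating its degree, and your treatments of Case~1 (the constant function, plus $H^7(\rS_\Gamma,\tm_\lambda)=0$ from $\vcd\,\Gamma=6$) and of Case~5 (holomorphy at $\theta_\lambda$, evaluation, identity term of the constant term) agree with what the paper does. But for Cases~3 and~7 --- the only genuinely hard part --- you describe the shape of the argument and then explicitly defer the step that actually proves the statement. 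A first, smaller problem is your analysis of where the poles of $c_0(\theta,w)$ come from: writing $z_i+1=-m_i$ at $\theta_\lambda$ and concluding that the numerators never meet the pole of $\zeta$ at $1$, so that singularities can only arise from trivial zeros of $\zeta$ in the denominators, is inconsistent with the standard Gindikin--Karpelevich picture and with the paper's own computation. In case $m_1=0$ the relevant simple pole is produced by the pole of $\zeta$ in the numerator of the single factor attached to $w=s_1$, along the hyperplane the paper calls $z_1=0$; trivial zeros in the denominators are compensated by the archimedean factors. If you take residues along the wrong divisor you do not obtain the residual representation you need.

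Second, and decisively: the computation that forces the class into degree $5$ rather than its Poincar\'e-dual degree $2$ is exactly the one you label ``the main obstacle'' and do not perform. Since in Cases~3 and~7 the non-inner part of the boundary cohomology is $\Q$ in each of the degrees $2$ and $5$, and the isotropy/dimension count is symmetric under $q\mapsto 7-q$, nothing in your argument excludes the alternative that the residue contributes in degree $2$. The paper settles this by identifying the image of the residue operator $\Eis'$ (taken along $z_1=0$) as the Langlands quotient $J_\lambda=\Ind_{\rP_1}^{G_2}\C^{-w_1(\lambda+\rho)}$ of the degenerate principal series, and then showing, via the commutative diagram relating $H^5(\fg,K_\infty,J_\lambda\otimes\m_\lambda)$, $H^5(\rS_\Gamma,\tm_\lambda\otimes\C)$ and $H^5(\partial\rS_\Gamma,\tm_\lambda\otimes\C)$, that the composite map into $H^0(\rS_\Gamma^{\rM_1},\tm_{w_{11}\cdot\lambda}\otimes\C)$ is an isomorphism, read off from the constant-term expansion. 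Without this identification of the residual module and of the degree in which its $(\fg,K_\infty)$-cohomology restricts nontrivially to the boundary, part~(2) is not proved; the proposal is a correct plan rather than a proof.
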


\begin{proof}
The proof will be based on the computation of $(\fg, K_\infty)$-cohomology of certain induced modules.
To begin, let us start with the proof of part (2). Without loss of generality, we may assume that $\lambda=m_2 \gamma_2$ for some $m_2>0$.

Recall that
\begin{equation*}
  c_0(\theta, w)= \prod_{\substack{ \alpha\in \Phi^+ \\ w^{-1}\alpha\in -\Phi^+ }} \frac{\zeta(\langle \alpha , \gamma_1 \rangle (z_1+1)+ \langle \alpha , \gamma_2 \rangle (z_2+1)-1)}{\zeta(\langle \alpha , \gamma_1 \rangle (z_1+1)+ \langle \alpha , \gamma_2 \rangle (z_2+1))}.
\end{equation*}
Hence,  in case of $w=s_1$ the constant term $c_0(\theta, w)$ has a simple pole along $z_1=0$.
Hence, the corresponding Eisenstein series has a simple pole along the line $z_1=0$. By taking the residue along the line $z_1=0$, we get an intertwining operator
\[\Eis': \quad \Ind_{\rP_0}^{G_2} \C^{-\lambda-\rho} \rightarrow C^{\infty}(G_2(\Q)\backslash G_2(\A)),\]
which factors through the Langlands quotient $J_\lambda$. More precisely, we have the following diagram.
\[\begin{tikzcd}
\Ind_{\rP_0}^{G_2} \C^{-\lambda-\rho}
\arrow[bend left]{drr}{\Eis'}
\arrow[bend right]{ddr}{\rho}
\arrow{dr}{A(-\lambda-\rho, \pi_\infty, s_1)} & & \\
& \Ind_{\rP_0}^{G_2} \C^{-w_1(\lambda+\rho)}
& C^{\infty}(G_2(\Q)\backslash G_2(\A)) \\
& J_\lambda \arrow[hookrightarrow]{u} \arrow[hookrightarrow]{ur}{\phi}
\end{tikzcd}\]
Note that,
\[J_\lambda=\Ind_{\rP_1}^{G_2} \C^{-w_1(\lambda+\rho)},\]
 and the intertwining operator $\rho$ is simply the induction of the intertwining operator
 \[\Ind_{\rB}^{\rM_1} \C^{-\lambda-\rho} \rightarrow  \C^{-w_1(\lambda+\rho)},\]
 where $\rB$ denotes the corresponding Borel subgroup of $\rM_1$.
 Namely, $\rho$ is the map
 \[\Ind_{\rP_1}^{G_2} \Ind_{\rB}^{\rM_1}\C^{-\lambda-\rho} \rightarrow \Ind_{\rP_1}^{G_2}\C^{-w_1(\lambda+\rho)}. \]

By taking cohomology of the map $\phi$, we get
\[H^5(\fg, K_\infty, J_\lambda\otimes \m_\lambda)\xrightarrow{\phi} H^5(\rS_\Gamma, \tm_\lambda\otimes \C).\]
Moreover, the map $\phi$ fits into the following diagram.
\[\begin{tikzcd}
H^5(\fg, K_\infty, J_\lambda\otimes \m_\lambda)
\arrow[bend left]{drr}{\Phi}
\arrow[bend right]{ddr}{\phi}
\arrow{dr}{\phi'} & & \\
&  H^5(\partial\rS_\Gamma, \tm_\lambda\otimes \C) \arrow{r}{\phi''}
& H^0(\rS_\Gamma^{\rM_1}, \tm_{w_{11}\cdot\lambda}\otimes \C) \\
& H^5(\rS_\Gamma, \tm_\lambda\otimes \C)\arrow{u}{r}
\end{tikzcd}\]
Here the map $\phi''$ is given as in Section \ref{case3}. In view of the constant term expansion, the map $\Phi$ is easily seen to be an isomorphism.  This completes the proof of part (2).

For the proof of part (1) and part (3), the same strategy applies and the proofs are indeed easier. In part  (1), the constant term, hence the Eisenstein series, has a double pole at $z_1=0, z_2=0$. By taking successive residues, the Langlands quotient we get is the constant representation, which provides nontrivial Eisenstein cohomology classes $H_{B, res}^{0}(\partial \rS_\Gamma, \tm_\lambda)$. While in case 5, the corresponding Eisenstein series is holomorphic at the special point $\theta_\lambda$. Hence part (3) can be proved by just taking cohomology of the map $\Eis$, see~\cite{Sch94} for more general cases.
 \end{proof}

\subsection{Proof of Theorem~\ref{Eiscoh}}  Now to finish the proof, we only need to interpret Proposition \ref{l-last2} using more concrete terms and then combine it with Proposition \ref{l-last1}.

According to Proposition \ref{l-last2}, when $m_1, m_2>0$, we always have
\begin{equation*}H^\bullet_{!, Eis}(\partial \rS_\Gamma, \tm_\lambda)=\bigoplus_{q=4}^7 H^q_{!}(\partial \rS_\Gamma, \tm_\lambda). \end{equation*}
Indeed, this also follows directly from the the main results of~\cite{LiSch04} by noticing that $\rank G_2-\rank K_\infty=0$.

\subsubsection{} When $m_1>0, m_2=0$, special attention needs to be paid on $H^q(\partial \rS_\Gamma, \tm_\lambda)(\pi^\psi, w)$ in case $w=w_7$, for which we have $\ell(w_7)=3$ and $\ell(w_7')=\ell(w_5)=2$. Thus, we have

\begin{equation*}
H^{q}_{Eis}(\rS_\Gamma, \tm_\lambda\otimes \C) =\left\{\begin{array}{cccc}
& H^q(\partial \rS_\Gamma, \tm_\lambda\otimes \C)  & q=5, 6, 7,\\
&\\
&0 & q=0, 1, 2, 3, \\
\end{array}\qquad\,, \right .
\end{equation*}
 and
\begin{align*}
  H^4_{!, Eis}(\rS_\Gamma, \tm_\lambda\otimes \C)(\pi^\psi, w)&= \C \psi,  \quad \mbox{if } L(1/2, \pi^\psi)=0, \\
  H^3_{!, Eis}(\rS_\Gamma, \tm_\lambda\otimes \C)(\pi^\psi, w)&= \C \psi, \quad \mbox{if } L(1/2, \pi^\psi)\ne 0.
\end{align*}

\subsubsection{} When $m_1=0, m_2>0$. A similar argument shows that
\begin{equation*}
H^{q}_{Eis}(\rS_\Gamma, \tm_\lambda\otimes \C) =\left\{\begin{array}{cccc}
& H^q(\partial \rS_\Gamma, \tm_\lambda\otimes \C)  & q=5, 6, 7,\\
&\\
&0 & q=0, 1, 2, 3, \\
\end{array}\qquad\,, \right .
\end{equation*}
 and
 \begin{align*}
  H^4_{!, Eis}(\rS_\Gamma, \tm_\lambda\otimes \C)(\pi^\psi, w)&= \C \psi,  \quad \mbox{if } L(1/2, Sym^3\pi^\psi)=0, \\
  H^3_{!, Eis}(\rS_\Gamma, \tm_\lambda\otimes \C)(\pi^\psi, w)&= \C \psi, \quad \mbox{if } L(1/2, Sym^3\pi^\psi)\ne 0.
\end{align*}

By combing above disscusion with Proposition \ref{l-last1}, Theorem \ref{Eiscoh} can be verified through a case-by-case study. \qed




\section*{Acknowledgements}
The authors would like to thank the Georg-August Universit\"at G\"ottingen,  Technische Universit\"at Dresden, and Max Planck Institue f\"ur Mathematics, Bonn, Germany, where much of the discussion and work on this project was accomplished, for its hospitality and wonderful working environment.
In addition, authors would like to extend their thanks to  G\"unter Harder for several discussions on the subject and the encouragement during the writing of this article. This work is financially supported by ERC Consolidator grants 648329 and 681207.




\nocite{}
\bibliographystyle{abbrv}
\bibliography{BG}

\end{document}